\numberwithin{equation}{section}
\newtheorem{theorem}[equation]{Theorem}
\newtheorem{proposition}[equation]{Proposition}
\newtheorem{lemma}[equation]{Lemma}
\newtheorem{corollary}[equation]{Corollary}
\theoremstyle{definition}
\newtheorem{definition}[equation]{Definition}
\newtheorem{remark}[equation]{Remark}
\newtheorem{example}[equation]{Example}
\def\C{\mathbb C}
\def\N{\mathbb N}
\def\R{\mathbb R}
\def\L{\mathscr L}
\def\M{\mathcal M}
\def\S{\mathscr S}
\def\Y{\mathcal Y}
\def\Z{\mathcal Z}
\def\Sing{\mathcal E}
\def\AllSing{\mathcal S}
\def\bP{\,{}^b\!P}
\def\trb{{\mathscr T}}
\def\dbar{d\hspace*{-0.08em}\bar{\hspace*{0.05em}}}
\def\eps{\varepsilon}
\def\im{i}
\def\m{\mathfrak m}
\def\set#1{\{#1\}}
\def\bPhat{\,{}^b\!\widehat P}
\def\display#1#2{\mbox{\parbox{#1} {#2}}}
\def\bysame{\underline{\hspace{25pt}}}
\DeclareMathOperator{\Aut}{Aut}
\DeclareMathOperator{\Cl}{Cl}
\DeclareMathOperator{\End}{End}
\DeclareMathOperator{\diam}{diam}
\DeclareMathOperator{\ind}{ind}
\DeclareMathOperator{\tind}{t-ind}
\DeclareMathOperator{\spec}{spec}
\DeclareMathOperator{\Hom}{Hom}
\DeclareMathOperator{\Op}{Op}
\DeclareMathOperator{\sym}{ \sigma\!\!\!\sigma}
\begin{document}

\title{Elliptic systems of variable order}

\thanks{Work partially supported by the National Science Foundation, Grants DMS-0901202 and DMS-0901173}

\author{Thomas Krainer}
\address{Penn State Altoona\\ 3000 Ivyside Park \\ Altoona, PA 16601-3760}
\email{krainer@psu.edu}
\author{Gerardo A. Mendoza}
\address{Department of Mathematics\\ Temple University\\ Philadelphia, PA 19122}
\email{gmendoza@temple.edu}

\begin{abstract}
The general theory of boundary value problems for linear elliptic wedge operators (on smooth manifolds with boundary) leads naturally, even in the scalar case, to the need to consider vector bundles over the boundary together with general smooth fiberwise multiplicative group actions. These actions, essentially trivial (and therefore invisible) in the case of regular boundary value problems, are intimately connected with what passes for Poisson and trace operators, and to pseudodifferential boundary conditions in the more general situation. Here the part of the theory pertaining pseudodifferential operators is presented in its entirety. The symbols for the latter operators are defined with the aid of an intertwining of the actions. Also presented here are the ancillary Sobolev spaces, an index theorem for the elliptic elements of the pseudodifferential calculus, and essential ingredients for analyzing boundary conditions of Atiyah-Patodi-Singer type in the more general theory.
\end{abstract}

\subjclass[2010]{Primary: 58J40; Secondary: 58J05, 58J20, 35G05}
\keywords{Manifolds with edge singularities, elliptic operators, boundary value problems}

\maketitle


\section{Introduction}\label{sec-Introduction}

In this note we introduce a calculus of pseudodifferential operators of variable order that act on sections of vector bundles endowed with smooth multiplicative group action over a closed manifold. The need to develop such a calculus arose from ongoing work by the authors in \cite{KrMe13a} aimed at developing a theory of boundary value problems for elliptic wedge operators. In the next few paragraphs we briefly describe this problem in order to give some motivation to the present work.

Elliptic wedge operators are structurally modeled on the operators one obtains by rewriting a regular linear differential operator in cylindrical coordinates along a submanifold. Thus the general form for such an operator is 
\begin{equation}\label{WedgeOperator}
A=x^{-m}\sum_{k+|\alpha|+|\beta| \leq m} a_{k,\alpha,\beta}(x,y,z) (x D_x)^k(x D_y)^\alpha D_z^\beta
\end{equation}
as one sees after some manipulation; $x$ is the radial variable, valued in $[0,\eps)$ for some $\eps>0$, $y$ the axial variable, ranging over an open set in a manifold $\Y$ of dimension $q$ and called the edge, and $z$ the variable in a general compact manifold $\Z$, a sphere in the case of cylindrical coordinates. The coefficients $a_{k\alpha\beta}$ are smooth up to $x=0$ (see Schulze \cite{SchuNH}). Operators of the form $P=x^mA$ are called edge operators (see Mazzeo \cite{Maz91}). In the general set-up for edge operators, the boundary (here given by $x=0$) of the manifold is the total space of a fiber bundle over $\Y$ with compact fibers $\Z_y$. Ellipticity, assumed throughout this introduction, means that
\begin{equation*}
\sum_{k+|\alpha|+|\beta| = m} a_{k,\alpha,\beta}(x,y,z) \xi^k\eta^\alpha \zeta^\beta
\end{equation*}
is invertible when $(\xi,\eta,\zeta)\ne 0$.

Let $\AllSing_{y,\sigma}$ be the set of finite sums
\begin{equation*}
\tau=\sum_{\ell} \phi_{\sigma,\ell}\, x^{\im\sigma}\log^\ell x;
\end{equation*}
here $y\in \Y$ and $\sigma\in\C$ are arbitrary and $\phi_{\sigma,\ell}$ is a section along $\Z_y$ of the vector bundle on which $A$ acts. The first link to $A$ is the subspace $\Sing_{y,\sigma}\subset\AllSing_{y,\sigma}$ consisting of those elements solving the equation
\begin{equation*}
\bP_y\tau=0, \quad \bP_y=\sum_{k+|\beta| \leq m} a_{k,0,\beta}(0,y,z) (xD_x)^k D_z^\beta.
\end{equation*}
This is a finite-dimensional space (and its elements have smooth coefficients $\phi_{\sigma,\ell}$) because of ellipticity. The set of elements $\sigma$ for which $\Sing_{y,\sigma}\ne 0$ is the boundary spectrum of $\bP$ (or $A$) at $y$ (see Mazzeo, op. cit., Melrose \cite{Mel93}, Krainer and Mendoza \cite{KrMe12a}), denoted $\spec_{b}(\bP_y)$. They are the complex numbers for which the indicial family at $y$,
\begin{equation*}
\bPhat_y(\sigma)=\sum_{k+|\beta| \leq m} a_{k,0,\beta}(0,y,z) \sigma^k D_z^\beta,
\end{equation*}
has nontrivial kernel. Fix some $\gamma\in \R$ and assume 
\begin{equation*}
\spec_{b}(\bP_y)\cap\set{\sigma\in \C:\Im\sigma=\gamma,\gamma-m}=\emptyset.
\end{equation*}
Then, as shown in \cite{KrMe12a}, 
\begin{equation}\label{FiberOfTrace}
\trb_y=\bigoplus_{\gamma-m<\Im\sigma<\gamma}\Sing_{y,\sigma}
\end{equation}
is the fiber over $y$ of a smooth vector bundle $\trb\to \Y$, the trace bundle of $A$ (the number $\gamma$ is implicit). We should perhaps point out that the notion of smoothness of $\trb$ is not trivial because of the possible branching behavior of $\spec_{b}(\bP_y)$.

In the case of a classical elliptic differential operator of order $m$, the boundary spectrum is $\set {-\im k: k=0\dotsc,m-1}$ and the spaces $\Sing_{y,-\im k}$ reduce to $\phi_{\sigma,0}x^k$ with $\sigma=-\im k$. To see this, suppose 
\begin{equation*}
A=\sum_{k+|\alpha|\leq m} a_{k,\alpha}(x,y)D_x^k D_y^\alpha
\end{equation*}
is such an operator; the fibers $\Z_y$ are just the points of $\Y$ and the coefficients are smooth. As above, the boundary is $x=0$ while the interior of the manifold lies in $x>0$. The operator $P=x^mA$ is given by
\begin{equation*}
P=\sum_{k+|\alpha|\leq m} a_{k,\alpha}(x,y)x^{m-k-|\alpha|}p_k(xD_x+\im|\alpha|) (xD_y)^\alpha
\end{equation*}
with $p_k(\sigma)=(\sigma+\im(k-1))(\sigma+\im(k-2))\cdots\sigma$, and so 
\begin{equation*}
\bP_y=a_{m,0}(0,y)p_m(xD_x).
\end{equation*}
The solutions of $\bP_y\tau=0$ are just polynomials of degree $\leq m-1$,
\begin{equation}\label{RegularTraceFiber} 
\tau=\sum_{j=0}^{m-1} \phi_{j}(y)x^j.
\end{equation}
These spaces of polynomials are, in this case, the fibers \eqref{FiberOfTrace} of the trace bundle of the regular elliptic operator $A$. The powers $j$ correspond to the numbers of the form $\im\sigma$ with $\sigma$ a root of the indicial family of $A$ as claimed: the indicial family is $a_{m,0}(0,y)p_m(\sigma)$. Observe that the ellipticity of $A$ ensures that $a_{m,0}(0,y)$ is invertible. We shall not enter further here into details about this (the reader may consult \cite{KrMe12a,KrMe13a} for more information) except to point out that the polynomials \eqref{RegularTraceFiber} are the terms forming the Taylor polynomials in $x$ of degree $m-1$ of putative solutions of $Au=f$ at the boundary (in this case $\gamma=-1/2$), and that classical boundary conditions are placed on the coefficients $\phi_j$ as functions (or sections if $A$ is not a scalar operator) over $\Y$.

The nature of $\trb_y$ (and of the trace bundle) is rather more intricate in the general case. Sections of $\trb$ do play the same role in the general theory as polynomials in the standard theory, therefore boundary conditions are pseudodifferential conditions on sections of this trace bundle. One may attempt at first to take standard operators at this stage. Note, however, that even if the boundary spectrum is simple (but non-constant), the pertinent distributional sections of $\trb$ will naturally have varying regularity in $y$ depending on the factor $x^{\im\sigma}$. This explains why the pseudodifferential operators need to be adapted. A second place where the theory is needed is in the construction of Poisson and trace operators. However we will not discuss this here but refer to our forthcoming work \cite{KrMe13a}.

\medskip
Having introduced the motivating vector bundles, we now address the group action. The operator $x\partial_x$ acts on $\AllSing_{y,\sigma}$ and since it commutes with $\bP_y$, it preserves the spaces $\Sing_{y,\sigma}$, hence acts on the fibers of $\trb$. The space $\Sing_{y,\sigma}$ is the generalized eigenspace of $x\partial_x$ in the fiber $\trb_y$ associated with the eigenvalue $i\sigma$.
That $x\partial_x$ acts smoothly as an endomorphism $\trb\to\trb$ requires an understanding of the meaning of the $C^\infty$ structure of $\trb$ which we again omit (but direct the reader to \cite{KrMe12a}). In the classical case, where the fiber $\trb_y$ consists of the polynomials \eqref{RegularTraceFiber}, the $C^\infty$ sections are those whose coefficients $\phi_j$ are smooth functions of $y$. In this case $x\partial_x$ acts on $\trb_y$ quite trivially: the eigenvalues are the numbers $j$, the corresponding eigenspace consist of monomials of degree $j$, and evidently $x\partial_x$ gives a smooth endomorphism of $\trb$. In the general case $x\partial_x$ acts, as already asserted, on each $\Sing_{\sigma,y}$, but now these are generalized eigenspaces (the $\log$ terms may be present) corresponding to the eigenvalue $\im \sigma$. As $y$ varies, so may the eigenvalues. Even so, the operator $x\partial_x$ is a smooth  endomorphism of $\trb$. The $\R_+$-action generated by $x\partial_x, \kappa_{\varrho} = \varrho^{x\partial_x} \in C^{\infty}(\Y;\End(\trb))$, is simply the one that is fiberwise based on the formula $(\kappa_\varrho f)(x) =f (\varrho x)$.

\medskip
Abstracting, we shall consider vector bundles $E$, $E_1$,\dots\ over a smooth manifold $\Y$ of dimension $q$ together with endomorphisms $a\in C^\infty(\Y;\End(E))$, $a_1\in C^\infty(\Y;\End(E_1))$,\dots, and build up a theory of pseudodifferential operators based on symbol classes that intertwine the $\R_+$-actions generated by these endomorphisms. We do this by first observing, in Section~\ref{sec-deltaAdmiss}, that over sufficiently small open sets $\Omega$ the eigenvalues of the various infinitesimal generators cluster in sets of small diameter $\delta<1$. This brings with it a decomposition of the part over $\Omega$ of the vector bundle into a direct sum of subbundles on each of which the generator is almost constant from fiber to fiber, giving us enough control on sizes of derivatives of the action to allow us to define, in Section~\ref{sec-localsymbols}, symbols of H\"ormander type $(1,\delta)$ (see \cite{Hormander1971}) that are twisted by the actions. When the generators are constant block-diagonal, the symbols become of Douglis-Nirenberg type, see \cite{ChazarainPiriou}; the discussion in this reference starting on page 295 and dealing with boundary value problems is particularly illuminating.

In addition to the local definition of the symbols, Section~\ref{sec-localsymbols} contains the basic elements necessary to form a viable local theory of pseudodifferential operators. The most fundamental result in connection with this is Proposition~\ref{basicprops}, one of whose assertions relates our symbol classes with the standard H\"ormander classes of type $(1,\delta)$; this gives a considerable simplification of the proofs in Section \ref{sec-localoperators} of composition, invariance under changes of coordinates, and existence of adjoints and asymptotic summability in the class. The basis for the eventual globalization is Corollary~\ref{ChangeofFrame}. Incidentally, the number $\delta$, other than lying in the interval $(0,1)$, is completely arbitrary and can be taken as small as one wishes in a particular application. In this paper it is fixed once and for all.

The local definition of the pseudodifferential operators is given in Section~\ref{sec-localoperators}. The approach here is quite classical in that we take advantage of the relation with H\"ormander classes just mentioned. The proof of composition formulas, for instance, requires almost no extra work. Under the natural notion of ellipticity we prove existence of parametrices in the calculus. The section ends with Proposition~\ref{InvarianceChangeOfFrame} on changes of frame. This is necessary to account for coverings of the original manifold by open sets for which the eigenvalues of the infinitesimal generators cluster in different ways over overlaps, and together with invariance under diffeomorphisms this allows for globalization in Section \ref{sec-global calculus}.

The local versions of Sobolev spaces adapted to the action (elements in a given space have, in addition to a constant shift in regularity, variable smoothness as determined by the infinitesimal generator) are constructed in Section~\ref{sec-localspaces}. We also prove there mapping properties, including regularity results for the elliptic elements in our calculus.

Section~\ref{sec-global calculus} deals with the global definition and some properties of operators from the global perspective. We prove, in particular, the existence of an exactly invertible operator that changes order (in the same vein as $(1-\Delta)^{s/2}$ for regular Sobolev spaces in $\R^n$). This is a useful tool, in particular in the following section on the global Sobolev spaces of variable smoothness. 

We define global versions of Sobolev spaces in Section \ref{sec-Fredholm theory}. Having the spaces at hand we also prove here Fredholm properties and existence of parametrices, and establish an Atiyah-Singer index theorem for elliptic elements. 

Finally, in Section~\ref{sec-Toeplitz Operators} we prove a theorem tailored for analyzing boundary conditions of generalized Atiyah-Patodi-Singer type \cite{APS} in the general theory.

We end this introduction with some remarks. First, pseudodifferential operators of variable order and associated Sobolev spaces in the scalar case are classical, see for example \cite{Beals1975, Leopold1991, Unterberger1, Unterberger2}. However, vector-valued analogues of these spaces are not suitable to capture the behavior of traces along the edge for functions in domains of natural $L^2$-based extensions of elliptic wedge operators. Second, our calculus contains naturally, as a special case, the theory of Douglis-Nirenberg elliptic systems, and our index theorem accordingly specializes to an index theorem for such systems. Third, our local theory recovers in the special case that the generators are constant (independent of the base variable $y$) the calculus of pseudodifferential operators with twisted operator valued symbols and ${\mathcal W}$-Sobolev spaces introduced by Schulze (when specialized to the finite-dimensional situation), see \cite{SchuNH}.


\section{$\delta$-admissibility}\label{sec-deltaAdmiss}

Let $\Y$ be a smooth manifold and $E\to \Y$ be a smooth complex vector bundle of rank $M$ and $a\in C^\infty(\Y;\End(E))$. Fix $\delta\in (0,1)$ arbitrarily. 

\begin{definition} Let $\Omega\subset \Y$ be open. A $\delta$-admissible decomposition of $E$ (relative to $a$) over $\Omega$ is a decomposition of $E_\Omega$, the part of $E$ over $\Omega$, as a direct sum of $a$-invariant trivial subbundles $E_k\to \Omega$ for which the closures, $\Sigma_k$, of the sets
\begin{equation*}
\bigcup_{y\in\Omega}\spec(a(y)|_{E_k})
\end{equation*}
are pairwise disjoint and of diameter $<\delta$. Here $\spec(a(y)|_{E_k})$ denotes the spectrum of the part $a(y)|_{E_k} : E_k \to E_k$ of $a(y)$ in $E_k$. The sets $\Sigma_k$ are referred to as eigenvalue clusters.
\end{definition}

Every point of $\Y$ lies in an open set $\Omega$ over which there is a $\delta$-admissible decomposition of $E$. Namely, let $\set{\sigma_k}_{k=1}^N$ be an enumeration of the points of $\spec a(y_0)$, pick numbers $0<\delta_k<\delta$ such that the disks $D(\sigma_k,\delta_k)=\set{\sigma:|\sigma-\sigma_k|\leq\delta_k}$ are pairwise disjoint, and let $\Omega$ be a neighborhood of $y_0$ such that 
\begin{equation*}
\spec (a(y))\subset \bigcup_{k=1}^N D(\sigma_k,\delta_k/2)\text{ for all }y\in \Omega.
\end{equation*}
Now let 
\begin{equation*}
\Pi_{k,y}=\frac{1}{2\pi\im}\int_{|\sigma-\sigma_k|=\delta_k}(\sigma-a(y))^{-1}\,d\sigma,\ y\in \Omega.
\end{equation*}
Then the spaces
\begin{equation*}
E_{k,y}=\Pi_{k,y}E_y,
\end{equation*}
which are $a(y)$-invariant, join to give smooth vector subbundles $E_k$ of $E_\Omega$ which are trivial if $\Omega$ is a small enough.

\begin{definition}
Let $\Omega\subset \Y$ be open. A $\delta$-admissible trivialization of $E$ over $\Omega$ (relative to $a$) is a trivialization of the part of $E$ over $\Omega$ that respects a $\delta$-admissible decomposition of $E_\Omega$.
\end{definition}

In other words, the trivialization of $E_\Omega$ is of the form $\phi=\bigoplus\phi_k$ where $\phi_k$ is a trivialization of $E_k$. For such a trivialization $\phi$, let $a_\phi=\phi a\phi^{-1}$, which we may, and do, view simply as a smooth map $\Omega\to \End(\C^M)$. The following properties of $a_\phi$, listed for convenience of reference, are a reflection of the $\delta$-admissibility of $\phi$:
\begin{equation}\label{Edecomp}
\display{310pt}{
\begin{enumerate}
\item There is a decomposition $\C^M=\bigoplus_{k=1}^N V_k$ by $a_\phi$-invariant subspaces.
\item The eigenvalue cluster sets
\begin{equation*}
\Sigma_k=\Cl\big(\bigcup_{y\in \Omega}\spec(a_\phi(y)|_{V_k})\big)
\end{equation*}
are pairwise disjoint with $\diam(\Sigma_k)<\delta$.
\end{enumerate}
}
\end{equation}

The element $a\in C^\infty(\Y;\End(E))$ generates a multiplicative group 
\begin{equation*}
\R_+\ni \varrho\mapsto \varrho^a\in C^\infty(\Y;\Aut(E)),
\end{equation*}
fiberwise expressed as 
\begin{equation}\label{DunfordIntegralGroup}
\varrho^{a(y)} = \frac{1}{2\pi i}\int_{\Gamma} \varrho^{\sigma}(\sigma - a(y))^{-1}\,d\sigma
\end{equation}
for all $\varrho > 0$, where $\Gamma$ is any fixed contour of integration that encloses $\spec(a(y))$. Recall that the multiplicative group property means that $(\varrho_1\varrho_2)^{a(y)} = \varrho_1^{a(y)}\varrho_2^{a(y)}$ for $\varrho_1,\varrho_2 > 0$, and $1^{a(y)} = \textup{Id}$.

If $\phi$ is a $\delta$-admissible trivialization of $E$ over $\Omega$ then the formula holds for every $y\in \Omega$ and fixed suitable $\Gamma$. In particular, this shows that the function 
\begin{equation*}
\Omega\times\R_+\ni(y,\varrho)\mapsto \varrho^{a_\phi(y)}\in \Aut(\C^M)
\end{equation*}
is smooth. In this local context it will be important to keep in mind that one can choose the contour $\Gamma$ to be of the form
\begin{equation*}
\Gamma = \Gamma_{1} \cup \dots \cup \Gamma_{N}, 
\end{equation*}
where for each $k$, 
\begin{equation}\label{SmallGammak}
\display{300pt}{$\Gamma_{k}$ encloses the compact set $\Sigma_{k}$, has winding number $0$ with respect to each point $\sigma'\in\Sigma_{k'}$, $k'\ne k$, and has diameter $<\delta$.}
\end{equation}
The group $\varrho^{a_\phi(y)}$ is block-diagonal with respect to the decomposition \eqref{Edecomp}, with the block in $V_{k}$ being the group generated by $a_\phi(y)|_{V_k}$ in $V_{k}$.

If $\phi$ is a $\delta$-admissible trivialization of $E$ over some open set $\Omega\subset \Y$ and $a_\phi=\phi a\phi^{-1}$ then of course
\begin{equation*}
\varrho^a = \phi^{-1}\varrho^{a_\phi}\phi 
\end{equation*}
over $\Omega$, and if $\psi$ is another $\delta$-admissible trivialization over an open set $\Omega'$ and $a_\psi=\psi a \psi^{-1}$, then the above formula coupled with the analogous formula for $\psi$ gives
\begin{equation*}
\varrho^{a_\psi}=(\psi\phi^{-1})\varrho^{a_\phi}(\psi\phi^{-1})^{-1}
\end{equation*}
on $\Omega\cap \Omega'$, equivalently,
\begin{equation}\label{ZeroHomogeneus}
(\psi\phi^{-1})=\varrho^{-a_\psi} (\psi\phi^{-1})\varrho^{a_\phi}.
\end{equation}
This formula should be viewed as expressing a property of the transition functions associated to $\delta$-admissible trivializations in terms of the multiplicative actions generated by $a_\phi$ and $a_\psi$. It is a fundamental component in the globalization of our theory whose analytic consequence is stated in Corollary~\ref{ChangeofFrame}.

\medskip
In the following three sections, which deal with the local theory, we confine ourselves to actions on various complex Euclidean spaces coming from $\delta$-admissible trivializations of various vector bundles $E$ (or $E_1$ and $E_2$) and their respective (given) infinitesimal generators of multiplicative actions. Omitting a reference to the particular $\delta$-admissible trivializations, infinitesimal generators of the actions are still denoted $a$ (or $a_1$ and $a_2$ as the case may be) and are simply smooth maps $\Omega\to \End(\C^M)$ (or $\C^{M_1}$ and $\C^{M_2}$). In all cases the underlying assumption is that there is a $\delta$-admissible decomposition of the respective Euclidean space as described in \eqref{Edecomp}.


\section{The symbols in the local calculus}\label{sec-localsymbols}

Let $\Omega\subset\R^q$ be open, $a_j\in C^\infty(\Omega,\End(\C^{M_j}))$, $j=1,2$. Fix $\delta\in (0,1)$. We assume throughout this and the next two sections that \eqref{Edecomp} holds in $\Omega$ both for $a_1$ and $a_2$, eventually also for any of the infinitesimal generators $a\in C^\infty(\Omega,\End(\C^M))$ of the group actions we discuss. Of course the decomposition in part (1) of \eqref{Edecomp} and what the eigenvalue cluster sets in part (2) are may depend on $a$ or the $a_j$.

\begin{definition}\label{symboldef}
Let $\mu \in \R$. We define
\begin{equation*}
S_{1,\delta}^{\mu}(\Omega\times\R^q;(\C^{M_1},a_1),(\C^{M_2},a_2))
\end{equation*}
to be the space of all $p(y,\eta) \in C^{\infty}(\Omega\times\R^q,\Hom(\C^{M_1},\C^{M_2}))$ such that for every compact subset $K \Subset \Omega$ and all $\alpha,\beta \in \N_0^q$ there exists a constant $C_{K,\alpha,\beta} > 0$ such that
$$
\|\langle \eta \rangle^{a_2(y)}\bigl(D_y^{\alpha}\partial_{\eta}^{\beta}p(y,\eta)\bigr)\langle \eta \rangle^{-a_1(y)}\| \leq C_{K,\alpha,\beta} \langle \eta \rangle^{\mu - |\beta| + \delta|\alpha|}
$$
for all $(y,\eta) \in K\times\R^q$. Here and elsewhere $\langle \eta \rangle=\sqrt{1+|\eta|^2}$.
\end{definition}

If the $a_j$ are constant, we may allow $\delta$ to be $0$. Further, if $a_j \equiv 0$ for all $y \in \Omega$ then $\varrho^{a_j} \equiv \textup{Id}$ is the trivial action on $\C^{M_j}$, and in this case we will just write $\C^{M_j}$ instead of the pair $(\C^{M_j},a_j)$.

\begin{example}\label{DNExample} 
Let
$$
a_j(y) = \begin{pmatrix} \mu_{j,1} & \cdots & 0 \\ \vdots & \ddots & \vdots \\ 0 & \cdots & \mu_{j,M_j} \end{pmatrix}
$$
with $\mu_{j,k} \in \R$ independent of $y \in \Omega$. In this case
$$
\varrho^{a_j(y)} =
\begin{pmatrix} \varrho^{\mu_{j,1}} & \cdots & 0 \\ \vdots & \ddots & \vdots \\ 0 & \cdots & \varrho^{\mu_{j,M_j}} \end{pmatrix},
$$
and a function
$$
p(y,\eta) = \begin{pmatrix} p_{1,1}(y,\eta) & \cdots & p_{1,M_1}(y,\eta) \\ \vdots & \ddots & \vdots \\ p_{M_2,1}(y,\eta) & \cdots & p_{M_2,M_1}(y,\eta) \end{pmatrix}
$$
belongs to $S^0_{1,\delta}(\Omega\times\R^q;(\C^{M_1},a_1),(\C^{M_2},a_2))$ if and only if the matrix entries satisfy $p_{k,l}(y,\eta) \in S_{1,\delta}^{\mu_{1,l}-\mu_{2,k}}(\Omega\times\R^q)$. This just means that $p(y,\eta)$ is a matrix that satisfies the Douglis-Nirenberg order convention \cite{DouglisNirenberg,Hormander1966}.
\end{example}

\begin{proposition}\label{basicprops}
\begin{enumerate}[(a)]
\item $S_{1,\delta}^{\mu}(\Omega\times\R^q;(\C^{M_1},a_1),(\C^{M_2},a_2))$ is a Fr\'echet space with the topology induced by the seminorms
$$
|p|_{K,\alpha,\beta} = \sup_{(y,\eta) \in K\times\R^q}\langle \eta \rangle^{-\mu+|\beta|-\delta|\alpha|}\|\langle\eta\rangle^{a_2(y)}\bigl(D_y^{\alpha}\partial_{\eta}^{\beta}p(y,\eta)\bigr)\langle \eta \rangle^{-a_1(y)}\|,
$$
where $K \Subset \Omega$ is part of a suitable countable exhaustion of $\Omega$ by compact subsets, and $\alpha,\beta \in \N_0^q$.
\item Let $a_j, a_j' \in C^{\infty}(\Omega,\End(\C^{M_j}))$, $j = 1,2$. Then there exists $\mu' > 0$ such that for every $\mu \in \R$ we have
$$
S_{1,\delta}^{\mu}(\Omega\times\R^q;(\C^{M_1},a_1),(\C^{M_2},a_2))
\subset S_{1,\delta}^{\mu + \mu'}(\Omega\times\R^q;(\C^{M_1},a'_1),(\C^{M_2},a'_2)).
$$
In particular,
$$
S^{-\infty}(\Omega\times\R^q,\Hom(\C^{M_1},\C^{M_2})) = \bigcap_{\mu \in \R} S_{1,\delta}^{\mu}(\Omega\times\R^q;(\C^{M_1},a_1),(\C^{M_2},a_2)).
$$
\item Let $p_j \in S_{1,\delta}^{\mu_j}(\Omega\times\R^q;(\C^{M_1},a_1),(\C^{M_2},a_2))$ with $\mu_j \to -\infty$ as $j \to \infty$. Let $p \in S_{1,\delta}^{\mu'}(\Omega\times\R^q,\Hom(\C^{M_1},\C^{M_2}))$ for some $\mu' \in \R$ such that $p \sim \sum_{j=1}^{\infty} p_j$. Note that such a symbol $p$ must exist by (b). Then
\begin{equation*}
p \in S_{1,\delta}^{\overline{\mu}}(\Omega\times\R^q;(\C^{M_1},a_1),(\C^{M_2},a_2)), 
\end{equation*}
where $\overline{\mu} = \max \mu_j$.
\item Differentiation $D_y^{\alpha}\partial_{\eta}^{\beta}$ of symbols induces a map
$$
S_{1,\delta}^{\mu}(\Omega\times\R^q;(\C^{M_1},a_1),(\C^{M_2},a_2)) \to
S_{1,\delta}^{\mu-|\beta|+\delta|\alpha|}(\Omega\times\R^q;(\C^{M_1},a_1),(\C^{M_2},a_2)).
$$ 
\item Pointwise composition of symbols induces a map
\begin{multline*}
S_{1,\delta}^{\mu_1}(\Omega\times\R^q;(\C^{M_2},a_2),(\C^{M_3},a_3)) \times S_{1,\delta}^{\mu_2}(\Omega\times\R^q;(\C^{M_1},a_1),(\C^{M_2},a_2)) \\ \longrightarrow
S_{1,\delta}^{\mu_1 + \mu_2}(\Omega\times\R^q;(\C^{M_1},a_1),(\C^{M_3},a_3)). 
\end{multline*}
\end{enumerate}
\end{proposition}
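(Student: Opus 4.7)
The plan is to treat the five parts in sequence, with (b) doing the analytic work and the remaining parts reducing to it together with Leibniz's rule. For (a), a compact exhaustion $K_1\Subset K_2\Subset\cdots$ of $\Omega$ makes the seminorm family countable; they separate points since $|p|_{K_j,0,0}=0$ for all $j$ already forces $p=0$ on $\Omega\times\R^q$, and completeness holds because any sequence Cauchy in every seminorm is Cauchy in $C^\infty(\Omega\times\R^q,\Hom(\C^{M_1},\C^{M_2}))$ and hence converges to some smooth $p$, with the pointwise bounds passing to the limit.

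The crux is (b). The key technical fact is a uniform polynomial bound on the group action: for $a\in C^\infty(\Omega,\End(\C^M))$ and $K\Subset\Omega$, the Dunford representation~\eqref{DunfordIntegralGroup} together with compactness of $\bigcup_{y\in K}\spec(a(y))$ furnishes an $N_0\geq 0$ with $\|\langle\eta\rangle^{\pm a(y)}\|\leq C\langle\eta\rangle^{N_0}$ on $K\times\R^q$. Differentiating $\langle\eta\rangle^{a(y)}=\exp(a(y)\log\langle\eta\rangle)$ in $\eta$ (noting that $a(y)$ commutes with $\langle\eta\rangle^{a(y)}$) improves the bound by $\langle\eta\rangle^{-1}$ per derivative, while Duhamel's formula for $y$-derivatives inserts at most polynomial factors $\log^k\langle\eta\rangle\leq C_\epsilon\langle\eta\rangle^{\epsilon}$. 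Given $p\in S_{1,\delta}^{\mu}$ twisted by $(a_1,a_2)$, I would write
\begin{equation*}
\langle\eta\rangle^{a_2'(y)}p(y,\eta)\langle\eta\rangle^{-a_1'(y)}=\bigl[\langle\eta\rangle^{a_2'(y)}\langle\eta\rangle^{-a_2(y)}\bigr]\bigl[\langle\eta\rangle^{a_2(y)}p(y,\eta)\langle\eta\rangle^{-a_1(y)}\bigr]\bigl[\langle\eta\rangle^{a_1(y)}\langle\eta\rangle^{-a_1'(y)}\bigr]
\end{equation*}
and apply Leibniz to $D_y^\alpha\partial_\eta^\beta$: after assigning multi-indices $(\alpha_i,\beta_i)$ with $\sum\alpha_i=\alpha$ and $\sum\beta_i=\beta$, each outer bracket contributes $O(\langle\eta\rangle^{N_0'+\epsilon-|\beta_i|})$ and the middle carries its twisted symbol bound $O(\langle\eta\rangle^{\mu-|\beta_2|+\delta|\alpha_2|})$, giving the required estimate once $\mu$ is shifted by $\mu'=2N_0'+1$. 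Setting $a_1',a_2'\equiv 0$ and letting $\mu$ vary yields the characterization of $S^{-\infty}$.

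Part (c) follows directly from (b): writing $p=\sum_{j<N}p_j+r_N$ with $r_N$ in the standard Hörmander class of order $\nu_N\to -\infty$, part (b) places $r_N$ in the twisted class of order $\nu_N+\mu'$, which for $N$ large is $\leq\overline{\mu}$, while each $p_j$ with $j<N$ is already twisted of order $\mu_j\leq\overline{\mu}$. Part (d) is immediate from the definition, since the bound for $D_y^{\alpha+\alpha'}\partial_\eta^{\beta+\beta'}p$ at order $\mu$ is exactly the required bound for $D_y^{\alpha'}\partial_\eta^{\beta'}(D_y^\alpha\partial_\eta^\beta p)$ at order $\mu-|\beta|+\delta|\alpha|$. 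For (e), applying Leibniz to $D_y^\alpha\partial_\eta^\beta(p_1p_2)$ and splitting each product via
\begin{equation*}
\langle\eta\rangle^{a_3(y)}qr\langle\eta\rangle^{-a_1(y)}=\bigl[\langle\eta\rangle^{a_3(y)}q\langle\eta\rangle^{-a_2(y)}\bigr]\bigl[\langle\eta\rangle^{a_2(y)}r\langle\eta\rangle^{-a_1(y)}\bigr]
\end{equation*}
with $q=D_y^{\alpha_1}\partial_\eta^{\beta_1}p_1$ and $r=D_y^{\alpha-\alpha_1}\partial_\eta^{\beta-\beta_1}p_2$ reduces each summand to a product of individually bounded twisted symbols whose exponents sum to $\mu_1+\mu_2-|\beta|+\delta|\alpha|$.

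The main obstacle is (b): obtaining the polynomial bounds on $\langle\eta\rangle^{\pm a(y)}$ and all its derivatives uniformly on compacts directly from~\eqref{DunfordIntegralGroup}, without leveraging the finer $\delta$-admissible structure. Once that is in hand, every other assertion reduces to Leibniz's rule or to the Borel summation already available in the standard Hörmander classes.
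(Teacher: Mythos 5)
Your treatment of (a), (c), (d), and (e) is sound and follows the same lines as the paper: (d) is indeed immediate, (e) is the Leibniz-plus-sandwich argument, and (c) reduces to (b) by splitting off a finite partial sum and absorbing the remainder.

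The problem is your proof of (b), where you estimate the wrong quantity. Recall that the definition of the twisted class applies the group factors \emph{outside} the derivative: you must bound
\begin{equation*}
\bigl\|\langle\eta\rangle^{a_2'(y)}\bigl(D_y^\alpha\partial_\eta^\beta p\bigr)\langle\eta\rangle^{-a_1'(y)}\bigr\|,
\end{equation*}
not $D_y^\alpha\partial_\eta^\beta\bigl(\langle\eta\rangle^{a_2'}p\,\langle\eta\rangle^{-a_1'}\bigr)$. Because of this, no Leibniz rule is needed and no derivatives of the group action enter the argument at all. The correct route (the paper's) is the same sandwich identity but applied to the already differentiated $p$:
\begin{equation*}
\langle\eta\rangle^{a_2'}\bigl(D_y^\alpha\partial_\eta^\beta p\bigr)\langle\eta\rangle^{-a_1'}
=\bigl[\langle\eta\rangle^{a_2'}\langle\eta\rangle^{-a_2}\bigr]\bigl[\langle\eta\rangle^{a_2}\bigl(D_y^\alpha\partial_\eta^\beta p\bigr)\langle\eta\rangle^{-a_1}\bigr]\bigl[\langle\eta\rangle^{a_1}\langle\eta\rangle^{-a_1'}\bigr].
\end{equation*}
Submultiplicativity of the norm, the twisted estimate on the middle factor, and the single bound $\|\langle\eta\rangle^{\pm a(y)}\|\le C\langle\eta\rangle^{m}$ on $K\times\R^q$ (from the Dunford representation~\eqref{DunfordIntegralGroup}) finish the proof with $\mu'=4m$.

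By instead differentiating the sandwiched product, you are in effect showing that $\langle\eta\rangle^{a_2'}p\,\langle\eta\rangle^{-a_1'}$ is an ordinary $S^{\mu+\mu'}_{1,\delta}$ symbol. Converting that back to the twisted $S^{\mu+\mu'}_{1,\delta}$ membership of $p$ relative to $(a_1',a_2')$ requires precisely the equivalence of Remark~\ref{symbolreducedtostandard}, which is derived \emph{from} this proposition together with Lemma~\ref{KKinvsymbol}; so the argument, as written, is circular. Compounding this, your expansion even uses the ordinary symbol estimate on $D_y^{\alpha_2}\partial_\eta^{\beta_2}\bigl(\langle\eta\rangle^{a_2}p\,\langle\eta\rangle^{-a_1}\bigr)$, which is again Remark~\ref{symbolreducedtostandard} rather than the hypothesis. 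Finally, the ``main obstacle'' you flag --- controlling all $y$- and $\eta$-derivatives of $\langle\eta\rangle^{\pm a(y)}$ from the Dunford formula alone --- is a non-issue for (b) once you use the correct identity, since those derivatives never appear; where derivatives of the group do enter (e.g.\ in Lemma~\ref{KKinvsymbol}) the $\delta$-admissible clustering structure is genuinely used, so your suggestion that it can be bypassed is also off.
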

\begin{proof}
Assertions (a), (b), (d), and (e) follow in the usual manner. A key component for proving (a) and (b) is that for any group action $\varrho^{a(y)}$ there exists $m > 0$ such that for every compact subset $K \Subset \Omega$ we can find $C > 0$ such that $\|\langle \eta \rangle^{a(y)}\| \leq C \langle \eta \rangle^{m}$ for all $y \in K$ and all $\eta \in \R^q$. That this is indeed the case follows from the Dunford integral representation \eqref{DunfordIntegralGroup} of $\varrho^{a(y)}$.

To illustrate the argument we prove (b). Let 
\begin{equation*}
p \in S_{1,\delta}^{\mu}(\Omega\times\R^q;(\C^{M_1},a_1),(\C^{M_2},a_2)).
\end{equation*}
Then
\begin{multline*}
\|\langle \eta \rangle^{a'_2(y)}\bigl(D_y^{\alpha}\partial_{\eta}^{\beta}p(y,\eta)\bigr)\langle \eta \rangle^{-a'_1(y)}\| = \\
\|\langle \eta \rangle^{a'_2(y)}\langle \eta \rangle^{-a_2(y)}\bigl[\langle \eta \rangle^{a_2(y)}\bigl(D_y^{\alpha}\partial_{\eta}^{\beta}p(y,\eta)\bigr)\langle \eta \rangle^{-a_1(y)}\bigr]\langle \eta \rangle^{a_1(y)} \langle \eta \rangle^{-a'_1(y)}\| \leq \\
\|\langle \eta \rangle^{a'_2(y)}\|\|\langle \eta \rangle^{-a_2(y)}\|\|\langle \eta \rangle^{a_2(y)}\bigl(D_y^{\alpha}\partial_{\eta}^{\beta}p(y,\eta)\bigr)\langle \eta \rangle^{-a_1(y)}\|\|\langle \eta \rangle^{a_1(y)}\|\|\langle \eta \rangle^{-a'_1(y)}\|
\end{multline*}
Because of (2) of \eqref{Edecomp}, each of the four group terms $\langle \eta \rangle^{\pm a(y)}$ on the outside can locally uniformly in $y$ be estimated by a constant times $\langle \eta \rangle^{m}$ for all $\eta \in \R^q$ and a suitable $m > 0$. Consequently, with $\mu' = 4m$, we obtain
$$
\|\langle \eta \rangle^{a'_2(y)}\bigl(D_y^{\alpha}\partial_{\eta}^{\beta}p(y,\eta)\bigr)\langle \eta \rangle^{-a'_1(y)}\| \leq C_{K,\alpha,\beta}\langle \eta \rangle^{\mu+\mu'-|\beta|+\delta|\alpha|}
$$
with a suitable constant $C_{K,\alpha,\beta} > 0$ for all $y \in K \Subset \Omega$, and all $\eta \in \R^q$. This proves (b).

Finally, (c) is a consequence of (b).
\end{proof}

\begin{lemma}\label{KKinvsymbol}
Let $a \in C^{\infty}(\Omega;\End(\C^M))$ satisfy \eqref{Edecomp}. For every compact set $K \Subset \Omega$ and all $\alpha_j,\beta_j \in \N_0^q$, $j=1,2$, there exists a constant $C > 0$ such that
$$
\|(D^{\alpha_1}_y\partial^{\beta_1}_{\eta}\langle\eta\rangle^{a(y)})(D^{\alpha_2}_y\partial^{\beta_2}_{\eta}\langle\eta\rangle^{-a(y)})\| \leq C \langle \eta \rangle^{-|\beta_1|-|\beta_2| + \delta}
$$
for all $(y,\eta) \in K \times \R^q$. If $|\alpha_1| = |\alpha_2| = 0$ we get the estimate
$$
\|(\partial^{\beta_1}_{\eta}\langle\eta\rangle^{a(y)})(\partial^{\beta_2}_{\eta}\langle\eta\rangle^{-a(y)}) \| \leq C \langle \eta \rangle^{-|\beta_1|-|\beta_2|}
$$
for all $(y,\eta) \in K \times \R^q$.

In particular,
$$
\langle \eta \rangle^{a(y)} \in S_{1,\delta}^0(\Omega\times\R^q;(\C^M,a),\C^M) \cap S_{1,\delta}^0(\Omega\times\R^q;\C^M,(\C^M,-a)).
$$
\end{lemma}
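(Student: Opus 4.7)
The plan is to reduce the estimates to a block-diagonal setting and then exploit the Dunford representation \eqref{DunfordIntegralGroup} of $\langle\eta\rangle^{\pm a(y)}$ on each block. By \eqref{Edecomp} there is a decomposition $\C^M=\bigoplus_{k=1}^N V_k$ into $a(y)$-invariant subspaces (independent of $y$) with cluster sets $\Sigma_k$ of diameter $<\delta$, so $\langle\eta\rangle^{\pm a(y)}$ and all their derivatives are block-diagonal; it suffices to work block by block. On $V_k$ I would fix a contour $\Gamma_k$ enclosing $\Sigma_k$ of diameter $<\delta$ as in \eqref{SmallGammak}, and set $m_k=\max_{\sigma\in\Gamma_k}\Re\sigma$, $m'_k=\min_{\sigma\in\Gamma_k}\Re\sigma$, so that $m_k-m'_k<\delta$.

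Next I would derive individual bounds. The elementary inequality $|\partial^\beta_\eta\langle\eta\rangle^{\pm\sigma}|\leq C(1+|\sigma|)^{|\beta|}\langle\eta\rangle^{\pm\Re\sigma-|\beta|}$ is bounded on $\Gamma_k$ by constants times $\langle\eta\rangle^{m_k-|\beta|}$ and $\langle\eta\rangle^{-m'_k-|\beta|}$, respectively, while $D^\alpha_y[(\sigma-a(y)|_{V_k})^{-1}]$ is a sum of products of resolvents and finitely many $D^{\alpha'}_y a(y)|_{V_k}$'s, all uniformly bounded on $\Gamma_k\times K$ for $K\Subset\Omega$. The Dunford integral therefore yields
\[
\|D^\alpha_y\partial^\beta_\eta\langle\eta\rangle^{a(y)|_{V_k}}\|\leq C\langle\eta\rangle^{m_k-|\beta|},\qquad\|D^\alpha_y\partial^\beta_\eta\langle\eta\rangle^{-a(y)|_{V_k}}\|\leq C\langle\eta\rangle^{-m'_k-|\beta|},
\]
and multiplying these two bounds gives $\langle\eta\rangle^{(m_k-m'_k)-|\beta_1|-|\beta_2|}\leq\langle\eta\rangle^{\delta-|\beta_1|-|\beta_2|}$, establishing the first estimate of the lemma.

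For the sharper bound when $|\alpha_1|=|\alpha_2|=0$ I would extract and cancel the exponential factor. Writing $\partial^\beta_\eta\langle\eta\rangle^\sigma=Q_\beta(\sigma,\eta)\langle\eta\rangle^\sigma$, where $Q_\beta(\cdot,\eta)$ is a polynomial in $\sigma$ of degree $|\beta|$ whose coefficients are scalar functions of $\eta$ of size $O(\langle\eta\rangle^{-|\beta|})$, substitution into the Dunford integral gives $\partial^\beta_\eta\langle\eta\rangle^{\pm a(y)}=Q_\beta(\pm a(y),\eta)\,\langle\eta\rangle^{\pm a(y)}$, with $\|Q_\beta(\pm a(y),\eta)\|\leq C\langle\eta\rangle^{-|\beta|}$ locally uniformly in $y$. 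Since polynomials in $a(y)$ commute with $\langle\eta\rangle^{\pm a(y)}$, the product telescopes to $Q_{\beta_1}(a(y),\eta)\,Q_{\beta_2}(-a(y),\eta)$, of norm $O(\langle\eta\rangle^{-|\beta_1|-|\beta_2|})$. The two memberships in $S^0_{1,\delta}$ then follow by combining this refined estimate (for $\alpha=0$) with the first estimate (for $|\alpha|\geq 1$, absorbing $\delta\leq\delta|\alpha|$) applied with $\alpha_2=\beta_2=0$; the second membership, which requires the reversed order of the two factors, is obtained by applying the same argument to $-a$ in place of $a$.

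The main obstacle I anticipate is the bookkeeping of the cancellation: the individual bounds on $\langle\eta\rangle^{\pm a(y)}$ carry the potentially large exponents $m_k$ and $-m'_k$, and the real content of the lemma is that these cancel when the two factors are multiplied, leaving only the $\langle\eta\rangle^\delta$ slack permitted by $\diam(\Sigma_k)<\delta$. Eliminating the $\delta$ entirely in the zero-$y$-derivative case relies on the commutativity of polynomials in $a(y)$ with $\langle\eta\rangle^{\pm a(y)}$.
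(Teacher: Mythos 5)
Your proposal is correct and follows the same strategy as the paper: block decomposition into the $V_k$, Dunford representation over contours $\Gamma_k$ of diameter $<\delta$, and a separate argument to strip the $\delta$-slack when $|\alpha_1|=|\alpha_2|=0$.

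The one place you diverge is in how the first (general) estimate is obtained. The paper first factors $\partial_\eta^{\beta_j}$ out as order-$(-|\beta_j|)$ symbol coefficients $b_{\beta_1}$, $c_{\beta_2}$ flanking the group factors, thereby reducing to the case $\beta_1=\beta_2=0$, and then writes $(D_y^{\alpha_1}\langle\eta\rangle^{a_k})(D_y^{\alpha_2}\langle\eta\rangle^{-a_k})$ as a single double Dunford integral over $\Gamma_k\times\Gamma_k$; the factor $\langle\eta\rangle^{\lambda-\sigma}$ in the integrand has $|\Re(\lambda-\sigma)|<\delta$ built in, so the slack appears directly. You instead estimate $\|D_y^{\alpha_1}\partial_\eta^{\beta_1}\langle\eta\rangle^{a|_{V_k}}\|\leq C\langle\eta\rangle^{m_k-|\beta_1|}$ and $\|D_y^{\alpha_2}\partial_\eta^{\beta_2}\langle\eta\rangle^{-a|_{V_k}}\|\leq C\langle\eta\rangle^{-m'_k-|\beta_2|}$ separately and then multiply, recovering $\langle\eta\rangle^{m_k-m'_k}\leq\langle\eta\rangle^\delta$ from $\diam\Gamma_k<\delta$. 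This is a little blunter in spirit — each factor is given a possibly large exponent that only cancels after multiplication, whereas the double integral exhibits the cancellation intrinsically — but it is logically equivalent, avoids the reduction step to $\beta_j=0$, and uses nothing beyond single Dunford integrals. Your treatment of the $\alpha_j=0$ case via the polynomial factorization $\partial_\eta^\beta\langle\eta\rangle^{\pm a}=Q_\beta(\pm a,\eta)\langle\eta\rangle^{\pm a}$ and commutativity matches the paper's $b_\beta/c_\beta$ device, and the derivation of the two $S^0_{1,\delta}$ memberships (absorbing $\delta\leq\delta|\alpha|$ for $|\alpha|\geq1$, and applying the lemma to $-a$ for the reversed-order product) is exactly what is needed.
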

\begin{proof}
For every $\beta \in \N_0^q$ there exist symbols $b_{\beta}, c_{\beta} \in S^{-|\beta|}(\Omega\times\R^q,\End(\C^M))$ such that
$$
\partial_{\eta}^{\beta}\langle \eta \rangle^{a(y)} = b_{\beta}(y,\eta)\langle \eta \rangle^{a(y)} \textup{ and }
\partial_{\eta}^{\beta}\langle \eta \rangle^{-a(y)} = \langle \eta \rangle^{-a(y)}c_{\beta}(y,\eta).
$$
This follows by induction, noting that
$$
\partial_{\eta_j}\langle \eta \rangle^{a(y)} = \Bigl[a(y)\frac{\partial_{\eta_j}\langle\eta\rangle}{\langle\eta\rangle}\Bigr]\langle \eta \rangle^{a(y)}
\textup{ and }
\partial_{\eta_j}\langle \eta \rangle^{-a(y)} = \langle \eta \rangle^{-a(y)}\Bigl[a(y)\frac{-\partial_{\eta_j}\langle\eta\rangle}{\langle\eta\rangle}\Bigr].
$$
In particular,
$$
(\partial^{\beta_1}_{\eta}\langle\eta\rangle^{a(y)})(\partial^{\beta_2}_{\eta}\langle\eta\rangle^{-a(y)}) = b_{\beta_1}(y,\eta)c_{\beta_2}(y,\eta),
$$
which proves the desired estimate in this case.

More generally, $(D^{\alpha_1}_y\partial^{\beta_1}_{\eta}\langle\eta\rangle^{a(y)})(D^{\alpha_2}_y\partial^{\beta_2}_{\eta}\langle\eta\rangle^{-a(y)})$ is a finite sum of terms of the form
$$
b(y,\eta)(D_y^{\gamma_1}\langle\eta\rangle^{a(y)})(D_y^{\gamma_2}\langle\eta\rangle^{-a(y)})c(y,\eta)
$$
with symbols $b \in S^{-|\beta_1|}(\Omega\times\R^q,\End(\C^M))$ and $c \in S^{-|\beta_2|}(\Omega\times\R^q,\End(\C^M))$, and $\gamma_1,\gamma_2 \in \N_0^q$.
This effectively reduces showing the claimed estimate to the case where both $|\beta_j| = 0$.

Now use the decomposition $\C^M = V_1 \oplus \dots \oplus V_N$ into the generalized eigenspaces corresponding to the eigenvalue clusters of $a(y)$ over $\Omega$, see \eqref{Edecomp}, and observe that $a$ is block-diagonal with respect to this decomposition. Let
\begin{equation*}
a_k =a|_{V_k} \in C^{\infty}(\Omega,\End(V_k))
\end{equation*}
be the part of $a$ in $V_k$, $k = 1,\dotsc,N$. For every $y \in \Omega$ the eigenvalues of $a_k(y)$ are the eigenvalues of $a(y)$ that are contained in the compact set $\Sigma_k$, and the generalized eigenspaces of $a_k(y)$ are the generalized eigenspaces of $a(y)$ corresponding to these eigenvalues. We have
$$
D_y^{\alpha}\langle \eta \rangle^{\pm a(y)} =
\begin{pmatrix}
D_y^{\alpha}\langle \eta \rangle^{\pm a_1(y)} & \cdots & 0 \\
\vdots & \ddots & \vdots \\
0 & \cdots & D_y^{\alpha}\langle \eta \rangle^{\pm a_N(y)}
\end{pmatrix},
$$
and consequently $\bigl(D_y^{\alpha_1}\langle \eta \rangle^{a(y)}\bigr)\bigl(D_y^{\alpha_2}\langle \eta \rangle^{-a(y)}\bigr)$ is given by the operator block matrix
$$
\begin{pmatrix}
\bigl(D_y^{\alpha_1}\langle \eta \rangle^{a_1(y)}\bigr)\bigl(D_y^{\alpha_2}\langle \eta \rangle^{-a_1(y)}\bigr) & \cdots & 0 \\
\vdots & \ddots & \vdots \\
0 & \cdots & \bigl(D_y^{\alpha_1}\langle \eta \rangle^{a_N(y)}\bigr)\bigl(D_y^{\alpha_2}\langle \eta \rangle^{-a_N(y)}\bigr)
\end{pmatrix}.
$$
Now use \eqref{DunfordIntegralGroup} to write
\begin{align*}
D_y^{\alpha_1}\langle\eta\rangle^{a_k(y)} &= \frac{1}{2\pi i}\int_{\Gamma_k}\langle \eta \rangle^{\lambda}D_y^{\alpha_1}(\lambda-a_k(y))^{-1}\,d\lambda, \\
D_y^{\alpha_2}\langle\eta\rangle^{-a_k(y)} &= \frac{1}{2\pi i}\int_{\Gamma_k}\langle \eta \rangle^{-\sigma}D_y^{\alpha_2}(\sigma-a_k(y))^{-1}\,d\sigma,
\end{align*}
where the contour of integration $\Gamma_k$ satisfies \eqref{SmallGammak}. Consequently
\begin{multline*}
\bigl(D_y^{\alpha_1}\langle \eta \rangle^{a_k(y)}\bigr)\bigl(D_y^{\alpha_2}\langle \eta \rangle^{-a_k(y)}\bigr)=\\
\frac{1}{(2\pi i)^2}\iint_{\Gamma_k\times\Gamma_k}\langle \eta \rangle^{\lambda-\sigma}D_y^{\alpha_1}(\lambda-a_k(y))^{-1}D_y^{\alpha_2}(\sigma-a_k(y))^{-1}\,d\lambda \, d\sigma.
\end{multline*}
The desired estimate in the case $|\beta_1| = |\beta_2| = 0$ follows from this integral representation for each of the $a_k$, $k = 1,\ldots,N$. Note that in the integral $|\lambda-\sigma| < \delta$ because $\diam(\Gamma_k)<\delta$. This finishes the proof of the lemma.
\end{proof}

\begin{remark}\label{symbolreducedtostandard}
From Proposition~\ref{basicprops} and Lemma~\ref{KKinvsymbol} we obtain that
\begin{align*}
p(y,\eta) &\in S_{1,\delta}^{\mu}(\Omega\times\R^q;(\C^{M_1},a_1),(\C^{M_2},a_2)) \\
\intertext{if and only if}
\langle \eta \rangle^{a_2(y)}p(y,\eta)\langle \eta \rangle^{-a_1(y)} &\in S_{1,\delta}^{\mu}(\Omega\times\R^q;\C^{M_1},\C^{M_2}).
\end{align*}
\end{remark}

\begin{lemma}\label{groupsymbol}
Let $b(y,\eta) \in S^{0}(\Omega\times\R^{q})$ be a scalar elliptic symbol. Assume that $b(y,\eta) > 0$ for all $(y,\eta) \in \Omega\times\R^{q}$. Then
$$
b(y,\eta)^{a(y)} \in S^{0}(\Omega\times\R^{q};\End(\C^M)).
$$
\end{lemma}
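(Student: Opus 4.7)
The approach is through the Dunford integral representation \eqref{DunfordIntegralGroup}. Since \eqref{Edecomp} is in force throughout $\Omega$, we may fix a contour $\Gamma = \Gamma_1 \cup \cdots \cup \Gamma_N$ of the type described in \eqref{SmallGammak}, enclosing the eigenvalue clusters $\Sigma_k$ of $a$ and independent of $y \in \Omega$. Then
\begin{equation*}
b(y,\eta)^{a(y)} = \frac{1}{2\pi i}\int_{\Gamma} b(y,\eta)^{\sigma}(\sigma - a(y))^{-1}\,d\sigma
\end{equation*}
for all $(y,\eta) \in \Omega \times \R^q$, where the scalar power $b^\sigma = e^{\sigma \log b}$ is well defined because $b > 0$.

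The first step is to show that, for each compact $K \Subset \Omega$, the family $\{b(y,\eta)^\sigma\}_{\sigma\in\Gamma}$ is bounded in $S^0(\Omega \times \R^q)$. Ellipticity of $b \in S^0$ together with positivity give $0 < c_K \leq b(y,\eta) \leq C_K$ on $K \times \R^q$, so $\log b \in C^\infty(\Omega \times \R^q)$ and the formulas $\partial(\log b) = b^{-1}\partial b$, combined with induction, yield $\partial_y^{\alpha'}\partial_\eta^{\beta'}\log b \in S^{-|\beta'|}$ over $K$. A Fa\`a di Bruno expansion then writes $\partial_y^\alpha \partial_\eta^\beta b^\sigma$ as a finite sum of terms of the form
\begin{equation*}
P(\sigma)\,\prod_i \bigl(\partial_y^{\alpha_i}\partial_\eta^{\beta_i}\log b\bigr)\, b^\sigma,
\end{equation*}
with $P$ a polynomial in $\sigma$ and $\sum_i |\beta_i| = |\beta|$. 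Since $b^\sigma$ is uniformly bounded for $\sigma$ on the compact contour $\Gamma$ (both $b$ and $b^{-1}$ being bounded on $K \times \R^q$), and the product of derivatives of $\log b$ lies in $S^{-|\beta|}$, each such term is bounded by a constant, uniform in $\sigma \in \Gamma$, times $\langle \eta \rangle^{-|\beta|}$ on $K \times \R^q$.

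Next, the resolvent $(\sigma - a(y))^{-1}$ depends only on $y$, and since $\Gamma$ stays at positive distance from $\spec(a(y))$ uniformly for $y$ in compact subsets of $\Omega$, all $y$-derivatives of $(\sigma-a(y))^{-1}$ are bounded uniformly for $(y,\sigma) \in K \times \Gamma$. Differentiating under the integral sign and applying Leibniz gives
\begin{equation*}
D_y^\alpha \partial_\eta^\beta b^{a(y)} = \frac{1}{2\pi i}\sum_{\alpha'+\alpha''=\alpha}\binom{\alpha}{\alpha'} \int_\Gamma \bigl(D_y^{\alpha'}\partial_\eta^\beta b^\sigma\bigr)\,D_y^{\alpha''}(\sigma - a(y))^{-1}\,d\sigma,
\end{equation*}
and combining the two estimates produces $\|D_y^\alpha \partial_\eta^\beta b^{a(y)}\| \leq C_{K,\alpha,\beta}\langle \eta \rangle^{-|\beta|}$ on $K \times \R^q$, which is precisely membership in $S^0(\Omega \times \R^q;\End(\C^M))$.

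The main technical point is the uniform-in-$\sigma$ symbol estimate for $b^\sigma$; once that is established, the resolvent factor contributes only $y$-regularity with no loss in $\eta$, and integration over the compact contour $\Gamma$ closes the argument.
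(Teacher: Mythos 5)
Your proof is correct and follows essentially the same route as the paper's: reduce to showing that $\{b(y,\eta)^\sigma : \sigma \in \Gamma\}$ is a bounded family in $S^0$, combine with smoothness of the resolvent $(\sigma-a(y))^{-1}$, and differentiate under the Dunford integral. The only cosmetic difference is that you organize the derivative bookkeeping through $\log b$ and Fa\`a di Bruno, whereas the paper describes the same structure directly in terms of products of $\sigma^k b^\sigma$ with derivatives of $b$ and $b^{-1}$; the two descriptions are equivalent.
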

\begin{proof}
By \eqref{DunfordIntegralGroup} we have a Dunford integral representation
$$
b(y,\eta)^{a(y)} = \frac{1}{2\pi i}\int_{\Gamma}b(y,\eta)^{\sigma}(\sigma - a(y))^{-1}\,d\sigma
$$
for all $y \in \Omega$ and all $\eta \in \R^q$ with a fixed contour $\Gamma$.

Let $K \Subset \Omega$ be an arbitrary compact subset. Then there are constants $c,C > 0$ such that $c \leq b(y,\eta) \leq C$ for all $(y,\eta) \in K\times\R^{q}$, and $b^{-1}(y,\eta) \in S^{0}(\Omega\times\R^{q})$. The derivatives $\partial_y^{\alpha}\partial_{\eta}^{\beta}b(y,\eta)^{\sigma}$ are sums of products of terms $\sigma^{k}b(y,\eta)^{\sigma}$, $k \in \N_0$, and derivatives of $b^{-1}(y,\eta)$ and $b(y,\eta)$, where the sum of all orders of derivatives of $b(y,\eta)$ and $b^{-1}(y,\eta)$ with respect to $\eta \in \R^q$ that occur in each of these products is precisely $|\beta|$. Now
$$
\sup\set{|\sigma^kb(y,\eta)^{\sigma}|: \sigma \in \Gamma, \; (y,\eta) \in K\times\R^{q}} < \infty
$$
for each $k \in \N_0$. This shows that
$$
\set{b(y,\eta)^{\sigma}: \sigma \in \Gamma} \subset S^{0}(\Omega\times\R^{q})
$$
is a bounded family of symbols. Because the function $(\sigma - a(y))^{-1}$ depends smoothly on $(y,\sigma) \in \Omega\times\Gamma$, we get that
$$
\set{b(y,\eta)^{\sigma}(\sigma-a(y))^{-1}: \sigma \in \Gamma} \subset S^{0}(\Omega\times\R^{q};\End(\C^M))
$$
is bounded, which in view of the Dunford integral representation implies the lemma.
\end{proof}

\begin{definition}\label{twisteddef}
A function $p \in C^{\infty}(\Omega\times(\R^q\setminus 0),\Hom(\C^{M_1},\C^{M_2}))$ is called twisted homogeneous of degree $\mu \in \R$ with respect to the actions $\varrho^{a_j(y)}$ on $\C^{M_j}$ if
\begin{equation}\label{twistedeq}
p(y,\varrho \eta) = \varrho^{\mu} \varrho^{-a_2(y)} p(y,\eta) \varrho^{a_1(y)}
\end{equation}
for all $\varrho > 0$. A function $p \in C^{\infty}(\Omega\times\R^q,\Hom(\C^{M_1},\C^{M_2}))$ is called twisted homogeneous of degree $\mu \in \R$ in the large with respect to these actions if for every compact subset $K \Subset \Omega$ there exists $R > 0$ such that \eqref{twistedeq} holds for all $y \in K$, $|\eta| \geq R$, and all $\varrho \geq 1$. Every such function uniquely determines a twisted homogeneous function $p_{(\mu)}(y,\eta)$ on $\Omega\times(\R^q\setminus 0)$ by requiring that $p_{(\mu)}(y,\eta) = p(y,\eta)$ for $y \in \Omega$ and $|\eta|$ sufficiently large.
\end{definition}

\begin{remark}\label{homogeneousprincsymb}
Let $p(y,\eta) \in C^{\infty}(\Omega\times\R^q,\Hom(\C^{M_1},\C^{M_2}))$ be twisted homogeneous of degree $\mu \in \R$ in the large, and let $p_{(\mu)}(y,\eta)$ be twisted homogeneous of degree $\mu$ determined by $p$. Suppose there exists $\varepsilon > 0$ such that $p \in S^{\mu-\varepsilon}_{1,\delta}(\Omega\times\R^q;(\C^{M_1},a_1),(\C^{M_2},a_2))$. Then $p_{(\mu)}(y,\eta) \equiv 0$.
\end{remark}

\begin{example}
Let $[\cdot] : \R^q \to \R_+$ be $C^{\infty}$, and assume that $[\eta] = |\eta|$ for $|\eta| \geq R$ for some sufficiently large $R > 0$. If $a\in C^\infty(\Omega,\End(\C^M))$ then
$$
[\varrho\eta]^{a(y)} = [\eta]^{a(y)}\varrho^{a(y)}
$$
for all $|\eta| \geq R$ and all $\varrho \geq 1$. Consequently, the function $[\eta]^{a(y)}$ is twisted homogeneous in the large of degree zero with respect to the action generated by $a(y)$ in the domain and the trivial action $\varrho^{0} \equiv \textup{Id}$ generated by the zero endomorphism in the range. Assuming, as we are, that \eqref{Edecomp} holds for $a$ we get
$$
[\eta]^{a(y)} \in S_{1,\delta}^0(\Omega\times\R^q;(\C^M,a),\C^M)
$$
by Proposition~\ref{homogeneousissymbol} below. Writing instead
$$
[\varrho\eta]^{a(y)} = \varrho^{-(-a(y))}[\eta]^{a(y)}
$$
for $|\eta| \geq R$ and $\varrho \geq 1$ shows that we also have
$$
[\eta]^{a(y)} \in S_{1,\delta}^0(\Omega\times\R^q;\C^M,(\C^M,-a)).
$$
\end{example}

\begin{proposition}\label{homogeneousissymbol}
Let $p \in C^{\infty}(\Omega\times\R^q,\Hom(\C^{M_1},\C^{M_2}))$ be twisted homogeneous of degree $\mu \in \R$ in the large. Then $p \in S_{1,\delta}^{\mu}(\Omega\times\R^q;(\C^{M_1},a_1),(\C^{M_2},a_2))$.
\end{proposition}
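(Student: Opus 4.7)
The plan is to invoke Remark~\ref{symbolreducedtostandard} to reduce the claim to a statement about the classical H\"ormander symbol class: it suffices to show that
\[
r(y,\eta) := \langle\eta\rangle^{a_2(y)}\, p(y,\eta)\, \langle\eta\rangle^{-a_1(y)}
\]
lies in $S^{\mu}_{1,\delta}(\Omega\times\R^q;\C^{M_1},\C^{M_2})$. I will actually establish the stronger conclusion $r \in S^{\mu}$ (the classical H\"ormander class with $\delta = 0$). On any set of the form $K\times\{|\eta|\le R\}$ with $K\Subset\Omega$ compact, both $p$ and $\langle\eta\rangle^{\pm a_j(y)}$ are smooth and bounded, so the estimates on $r$ are trivial there. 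A smooth cutoff therefore reduces matters to the region $|\eta|\ge R$, on which the twisted homogeneity of $p$ in the large is in force.

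For such $|\eta|$, with $R$ chosen so that \eqref{twistedeq} holds on $K$ for $|\eta|\ge R$ and $\varrho\ge 1$, apply the identity with $\varrho = |\eta|/R \ge 1$ and base point $R\eta/|\eta|$ (which has norm $R$) to obtain
\[
p(y,\eta) = \left(\tfrac{|\eta|}{R}\right)^{\mu}\left(\tfrac{|\eta|}{R}\right)^{-a_2(y)} p\!\left(y,\tfrac{R\eta}{|\eta|}\right)\left(\tfrac{|\eta|}{R}\right)^{a_1(y)}.
\]
Conjugating by $\langle\eta\rangle^{\pm a_j(y)}$ and exploiting the one-parameter group identity $\tau_1^{a_j(y)}\tau_2^{a_j(y)}=(\tau_1\tau_2)^{a_j(y)}$, which holds at fixed $y$ because both factors are powers of the same operator $a_j(y)$, the scalar factors combine to yield, on $|\eta|\ge R$,
\[
r(y,\eta)=\left(\tfrac{|\eta|}{R}\right)^{\mu}\left(\tfrac{R\langle\eta\rangle}{|\eta|}\right)^{a_2(y)} p\!\left(y,\tfrac{R\eta}{|\eta|}\right)\left(\tfrac{|\eta|}{R\langle\eta\rangle}\right)^{a_1(y)}.
\]

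After smoothing $|\eta|$ across the origin by some $[\eta]\in C^{\infty}(\R^q;\R_+)$ agreeing with $|\eta|$ for $|\eta|\ge R$, each factor in the last display is a classical H\"ormander symbol: $([\eta]/R)^{\mu}$ is a scalar elliptic symbol of order $\mu$; the scalar functions $R\langle\eta\rangle/[\eta]$ and $[\eta]/(R\langle\eta\rangle)$ are positive, bounded, and of order zero, so Lemma~\ref{groupsymbol} places the corresponding twisted factors in $S^{0}(\Omega\times\R^q;\End(\C^{M_j}))$; and $p(y,R\eta/|\eta|)$, being smooth in $(y,\eta/|\eta|)$, is an $S^{0}$-symbol on $|\eta|\ge R$ since every $\eta$-derivative picks up a factor of $|\eta|^{-1}$ through the chain rule. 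The classical composition rule then gives $r \in S^{\mu}$, concluding the argument. The main conceptual step, and the one non-routine element, is the algebraic rewriting in the preceding paragraph: the commutative amalgamation of $\langle\eta\rangle^{a_j(y)}$ and $\varrho^{\pm a_j(y)}$ at fixed $y$ turns the twisted symbol estimate into a pure classical-symbol computation, after which Lemma~\ref{groupsymbol} and standard composition do all the remaining work.
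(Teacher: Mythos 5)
Your proof is correct and takes a genuinely different route from the paper's. Both arguments rest on the twisted homogeneity identity applied with $\varrho = |\eta|/R$ to pull the estimate back to the sphere $|\eta| = R$, and both introduce the bounded factor $(R\langle\eta\rangle/|\eta|)^{\pm a_j(y)}$. The paper's proof then differentiates \eqref{twistedeq} directly and bounds the resulting Leibniz terms $(\varrho^{a_2(y)}D_y^{\alpha_1}\varrho^{-a_2(y)})(D_y^{\alpha_2}\partial_\eta^\beta p)(D_y^{\alpha_3}\varrho^{a_1(y)}\,\varrho^{-a_1(y)})$ by $C\varrho^{\delta|\alpha|}$ via Lemma~\ref{KKinvsymbol} (which is precisely where the $\delta$ enters), thereby obtaining the $S_{1,\delta}^{\mu}$ estimate for $p$ head-on. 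You instead pass to the conjugated symbol $r=\langle\eta\rangle^{a_2}p\langle\eta\rangle^{-a_1}$ via Remark~\ref{symbolreducedtostandard}, rewrite it on $|\eta|\ge R$ as the product of four symbols --- $(|\eta|/R)^\mu$, $(R\langle\eta\rangle/|\eta|)^{a_2(y)}$, $p(y,R\eta/|\eta|)$, and $(|\eta|/(R\langle\eta\rangle))^{a_1(y)}$ --- and observe that each is a classical type $(1,0)$ H\"ormander symbol, the two twisted powers by Lemma~\ref{groupsymbol} (applied to the bounded, positive, elliptic scalar $R\langle\eta\rangle/[\eta]$ and its reciprocal) and the restriction-to-sphere factor by the chain rule. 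This gives the strictly stronger conclusion $r\in S^\mu_{1,0}\subset S^\mu_{1,\delta}$, and it shifts the workload from Lemma~\ref{KKinvsymbol} to Lemma~\ref{groupsymbol} (though the former is still used implicitly through Remark~\ref{symbolreducedtostandard}). Your factorization exposes the multiplicative structure of $r$ transparently and shows in passing that the conjugated symbol satisfies $(1,0)$-estimates even though the twisted estimates for $p$ itself are only $(1,\delta)$; the paper's route is slightly shorter and never passes to the conjugated symbol.
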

\begin{proof}
Let $K \Subset \Omega$ be any compact subset. Differentiating both sides of relation \eqref{twistedeq} and multiplying by the group actions gives
\begin{gather*}
\varrho^{-\mu+|\beta|} \varrho^{a_2(y)}\bigr(D_y^{\alpha}\partial_{\eta}^{\beta}p\bigl)(y,\varrho\eta)\varrho^{-a_1(y)} = \\
\sum_{\alpha_1+\alpha_2+\alpha_3=\alpha}\frac{\alpha!}{\alpha_1 ! \alpha_2 ! \alpha_3 !}
\bigl(\varrho^{a_2(y)}\bigl[D_y^{\alpha_1}\varrho^{-a_2(y)}\bigr]\bigr)\bigl(D_y^{\alpha_2}\partial_{\eta}^{\beta}p(y,\eta)\bigr)\bigl(\bigl[D_y^{\alpha_3}\varrho^{a_1(y)}\bigr]\varrho^{-a_1(y)}\bigr).
\end{gather*}
This holds for all $y \in K$, $|\eta| \geq R$, and all $\varrho \geq 1$ for some sufficiently large $R > 0$. By Lemma~\ref{KKinvsymbol} there exists a constant $C > 0$ such that the norm of the right-hand side is bounded by $C \varrho^{\delta|\alpha|}$ as $(y,\eta)$ varies over $K \times \set{\eta \in \R^q: |\eta| = R}$ and $\varrho \geq 1$. Consequently,
$$
\Bigl\| \Bigl(\frac{|\eta|}{R}\Bigr)^{-\mu+|\beta|-\delta|\alpha|} \Bigl(\frac{|\eta|}{R}\Bigr)^{a_2(y)}\bigr(D_y^{\alpha}\partial_{\eta}^{\beta}p\bigl)(y,\eta)\Bigl(\frac{|\eta|}{R}\Bigr)^{-a_1(y)} \Bigr\|
$$
is a bounded function of $y \in K$ and $|\eta| \geq R$. Now
$$
\langle \eta \rangle^{\pm a_j(y)} = \Bigl(\frac{R\langle \eta \rangle}{|\eta|}\Bigr)^{\pm a_j(y)}\Bigl(\frac{|\eta|}{R}\Bigr)^{\pm a_j(y)} = \Bigl(\frac{|\eta|}{R}\Bigr)^{\pm a_j(y)}\Bigl(\frac{R\langle \eta \rangle}{|\eta|}\Bigr)^{\pm a_j(y)},
$$
and the function $\Bigl(\frac{R\langle \eta \rangle}{|\eta|}\Bigr)^{\pm a_j(y)}$ is bounded as $(y,\eta) \in K\times\set{\eta \in \R^q: |\eta| \geq R}$. Consequently,
$$
\Bigl\| \langle \eta \rangle^{-\mu+|\beta|-\delta|\alpha|} \langle \eta \rangle^{a_2(y)}\bigr(D_y^{\alpha}\partial_{\eta}^{\beta}p\bigl)(y,\eta)\langle \eta \rangle^{-a_1(y)} \Bigr\|
$$
is bounded for all $y \in K$ and all $|\eta| \geq R$ which implies the assertion.
\end{proof}

The following corollary is fundamental in the globalization of the pseudodifferential calculus associated with our symbol spaces. In its statement we revert to the notation in Section~\ref{sec-deltaAdmiss} and let $\phi$ and $\psi$ be $\delta$-admissible trivializations of $E\to \Y$ over open sets $\Omega$ and $\Omega'$, $a_\phi$ and $a_\psi$ as defined in Section~\ref{sec-deltaAdmiss}. 

\begin{corollary}\label{ChangeofFrame}
The element $\psi \phi^{-1}\in C^\infty(\Omega\cap \Omega',\End(\C^M))$ is twisted homogeneous of degree zero with respect to the action $\varrho^{a_\phi}$ in the domain and $\varrho^{a_\psi}$ in the range. Consequently,
\begin{equation*}
\psi\phi^{-1}\in S^0_{1,\delta}(\Omega\cap \Omega';(\C^M,a_\phi),(\C^M,a_\psi)).
\end{equation*}
\end{corollary}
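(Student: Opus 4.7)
The proof is essentially a direct consequence of equation \eqref{ZeroHomogeneus} combined with Proposition~\ref{homogeneousissymbol}, so the plan is very short.

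First I would view $\psi\phi^{-1}$ as a function $p(y,\eta)\in C^\infty((\Omega\cap\Omega')\times\R^q,\End(\C^M))$ that is independent of the covariable $\eta$. The key input is the identity \eqref{ZeroHomogeneus}, namely
\begin{equation*}
\psi\phi^{-1}=\varrho^{-a_\psi}(\psi\phi^{-1})\varrho^{a_\phi}
\end{equation*}
on $\Omega\cap\Omega'$, valid for every $\varrho>0$. Since $p$ does not depend on $\eta$, we trivially have $p(y,\varrho\eta)=p(y,\eta)$, and so the displayed identity rearranges to exactly the twisted homogeneity relation \eqref{twistedeq} with $\mu=0$, $a_1=a_\phi$ in the domain and $a_2=a_\psi$ in the range. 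This verifies the first statement of the corollary; in fact the relation holds for all $y\in\Omega\cap\Omega'$ and all $\eta\in\R^q$ with $\varrho>0$ arbitrary, so $\psi\phi^{-1}$ is twisted homogeneous of degree zero globally, which is stronger than the ``in the large'' condition of Definition~\ref{twisteddef}.

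For the symbol class assertion I would simply invoke Proposition~\ref{homogeneousissymbol} applied to $p(y,\eta)=\psi\phi^{-1}(y)$ with $M_1=M_2=M$, $\mu=0$, $a_1=a_\phi$, $a_2=a_\psi$. Both $a_\phi$ and $a_\psi$ come from $\delta$-admissible trivializations and hence satisfy the standing assumption \eqref{Edecomp}, so the hypotheses of that proposition are met. The conclusion
\begin{equation*}
\psi\phi^{-1}\in S^0_{1,\delta}(\Omega\cap\Omega';(\C^M,a_\phi),(\C^M,a_\psi))
\end{equation*}
follows immediately.

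There is no real obstacle here: everything is packaged in the earlier statements, and the corollary is essentially a pointer from Section~\ref{sec-deltaAdmiss} into the local symbol framework. The only thing worth emphasizing explicitly in the write-up is that the $\eta$-independence reduces the twisted homogeneity condition \eqref{twistedeq} to the algebraic identity \eqref{ZeroHomogeneus}, so that nothing beyond that identity and Proposition~\ref{homogeneousissymbol} is needed.
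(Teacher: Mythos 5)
Your proposal is correct and follows essentially the same route as the paper: identify formula \eqref{ZeroHomogeneus} as the statement that $\psi\phi^{-1}$ (viewed as $\eta$-independent) satisfies the degree-zero twisted homogeneity relation \eqref{twistedeq}, then invoke Proposition~\ref{homogeneousissymbol}. The added remarks on $\eta$-independence and on \eqref{Edecomp} holding for $a_\phi,a_\psi$ are just a more explicit spelling-out of what the paper leaves implicit.
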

This is an immediate consequence of Proposition~\ref{homogeneousissymbol} and formula \eqref{ZeroHomogeneus} which expresses the fact that $\psi\phi^{-1}$ is twisted homogeneous of degree $0$ with respect to the actions $\varrho^{a_\phi}$ and $\varrho^{a_\psi}$ on $\C^{M}$. 


\section{The operators in the local calculus}\label{sec-localoperators}

We continue our discussion under the assumptions stated in the first paragraph of Section~\ref{sec-localsymbols}.

\begin{remark}
Let $X$ be any Banach space. By $S_{1,\delta}^{\mu}(\Omega\times\R^{q},X)$ we denote as usual the space of all $p(y,\eta) \in C^{\infty}(\Omega\times\R^{q},X)$ such that for all $\alpha,\beta \in \N_0^q$ and every compact subset $K \Subset \Omega$ there exists a constant $C_{K,\alpha,\beta} > 0$ such that
$$
\|D_y^{\alpha}\partial_{\eta}^{\beta}p(y,\eta)\| \leq C_{K,\alpha,\beta}\langle \eta \rangle^{\mu - |\beta| + \delta|\alpha|}
$$
for all $(y,\eta) \in K\times{\mathbb R}^{q}$. As is customary we omit the reference to the space $X$ from the notation if $X = \C$.

By $\Psi_{1,\delta}^{\mu}(\Omega;\C^{M_1},\C^{M_2})$ we denote the space of pseudodifferential operators
$$
P : C_c^{\infty}(\Omega;\C^{M_1}) \to C^{\infty}(\Omega;\C^{M_2})
$$
given by $P = \Op(p) + R$ with
\begin{align*}
\Op(p)u(y) &= \int_{\R^q} e^{iy\eta}p(y,\eta)\hat{u}(\eta)\dbar\eta, \\
Ru(y) &= \int_{\Omega} k(y,y')u(y')\,dy'
\end{align*}
for $u \in C_c^{\infty}(\Omega;\C^{M_1})$, where $p \in S_{1,\delta}^{\mu}(\Omega\times\R^{q},\Hom(\C^{M_1},\C^{M_2}))$, and $k$ is a $C^{\infty}$-kernel taking values in $\Hom(\C^{M_1},\C^{M_2})$. The class of the symbol $p(y,\eta)$ modulo $S^{-\infty}(\Omega\times\R^{q},\Hom(\C^{M_1},\C^{M_2}))$ is uniquely determined by $P$, and we will simply refer to $p(y,\eta)$ as the symbol of $P$ with the understanding that symbols are equivalence classes modulo $S^{-\infty}$.
\end{remark}

In the following definition we take advantage of the fact that by (b) of Proposition~\ref{basicprops}, there is $\mu'$ such that 
\begin{equation}\label{InHormanderClass}
S_{1,\delta}^\mu(\Omega\times\R^q;(\C^{M_1},a_1),(\C^{M_2},a_2))\subset S_{1,\delta}^{\mu+\mu'}(\Omega\times\R^q;\C^{M_1},\C^{M_2}).
\end{equation}

\begin{definition}
Let $a_j\in C^\infty(\Omega,\End(\C^{M_j}))$ and $\mu \in \R$. We denote by 
\begin{equation*}
\Psi_{1,\delta}^{\mu}(\Omega;(\C^{M_1},a_1),(\C^{M_2},a_2))
\end{equation*}
the space of pseudodifferential operators $P : C_c^{\infty}(\Omega;\C^{M_1}) \to C^{\infty}(\Omega;\C^{M_2})$ with symbols of class $S_{1,\delta}^{\mu}(\Omega\times\R^q;(\C^{M_1},a_1),(\C^{M_2},a_2))$. The principal symbol of $P$, denoted by $\sym(P)$, is the class of the symbol $p(y,\eta)$ of $P$ modulo $S_{1,\delta}^{\mu-1+\delta}(\Omega\times\R^q;(\C^{M_1},a_1),(\C^{M_2},a_2))$.
\end{definition}

We say that $P$ has twisted homogeneous principal symbol if $\sym(P)$ has a representative that is twisted homogeneous of degree $\mu$ in the large, see Definition~\ref{twisteddef}. By Remark~\ref{homogeneousprincsymb} there is a unique function $p_{(\mu)}(y,\eta) \in C^{\infty}(\Omega\times(\R^q\setminus 0),\Hom(\C^{M_1},\C^{M_2}))$ that is twisted homogeneous of degree $\mu$ such that $p(y,\eta) = p_{(\mu)}(y,\eta)$ for every $y \in \Omega$ and all sufficiently large $|\eta|$, and $p_{(\mu)}(y,\eta)$ is independent of the choice of representative $p(y,\eta)$ of $\sym(P)$ that is twisted homogeneous in the large. In this case, we identify $\sym(P)$ with that unique twisted homogeneous function $p_{(\mu)}$ and call it the twisted homogeneous principal symbol of $P$, i.e., $\sym(P)(y,\eta) = p_{(\mu)}(y,\eta)$ is then itself considered a twisted homogeneous function of degree $\mu \in \R$ on $\Omega\times(\R^q\setminus 0)$.

\begin{proposition}\label{localcomposition}
Let $P_1 \in \Psi^{\mu_1}_{1,\delta}(\Omega;(\C^{M_2},a_2),(\C^{M_3},a_3))$ have symbol $p_1(y,\eta)$, and let $P_2 \in \Psi^{\mu_2}_{1,\delta}(\Omega;(\C^{M_1},a_1),(\C^{M_2},a_2))$ have symbol $p_2(y,\eta)$. We assume that either $P_1$ or $P_2$ is properly supported.

Then the composition $P_1\circ P_2 \in \Psi^{\mu_1+\mu_2}_{1,\delta}(\Omega;(\C^{M_1},a_1),(\C^{M_3},a_3))$ with symbol
\begin{equation}\label{SymbAsymptCompos}
p_1 \# p_2 \sim \sum_{\alpha \in \N_0^q} \frac{1}{\alpha !} \bigl(\partial_{\eta}^{\alpha}p_1\bigr)\bigl(D_y^{\alpha}p_2\bigr).
\end{equation}
In particular, the principal symbols satisfy $\sym(P_1\circ P_2) = \sym(P_1)\sym(P_2)$.
\end{proposition}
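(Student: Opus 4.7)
The plan is to reduce the composition statement in the twisted class to the classical composition theorem for standard H\"ormander symbols, and then use parts (b)--(e) of Proposition~\ref{basicprops} to recover twisted order.

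First, by Proposition~\ref{basicprops}(b) there exists $\mu' \geq 0$ such that $p_1$ and $p_2$ lie in standard H\"ormander classes $S_{1,\delta}^{\mu_1+\mu'}(\Omega\times\R^q, \Hom(\C^{M_2},\C^{M_3}))$ and $S_{1,\delta}^{\mu_2+\mu'}(\Omega\times\R^q, \Hom(\C^{M_1},\C^{M_2}))$, respectively. The classical composition theorem for $S_{1,\delta}$ pseudodifferential operators, applicable since one of $P_1, P_2$ is properly supported, then immediately gives that $P_1 \circ P_2$ is a standard pseudodifferential operator whose symbol $p_1 \# p_2$ admits the asymptotic expansion \eqref{SymbAsymptCompos} in the standard H\"ormander sense.

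Next, I analyze each term of the expansion in the twisted class. By Proposition~\ref{basicprops}(d), $\partial_\eta^\alpha p_1 \in S_{1,\delta}^{\mu_1-|\alpha|}(\Omega\times\R^q;(\C^{M_2},a_2),(\C^{M_3},a_3))$ and $D_y^\alpha p_2 \in S_{1,\delta}^{\mu_2+\delta|\alpha|}(\Omega\times\R^q;(\C^{M_1},a_1),(\C^{M_2},a_2))$. Pointwise composition using Proposition~\ref{basicprops}(e) yields
\begin{equation*}
\tfrac{1}{\alpha!}(\partial_\eta^\alpha p_1)(D_y^\alpha p_2) \in S_{1,\delta}^{\mu_1+\mu_2-(1-\delta)|\alpha|}(\Omega\times\R^q;(\C^{M_1},a_1),(\C^{M_3},a_3)).
\end{equation*}
Since $\delta < 1$, these twisted orders tend to $-\infty$ as $|\alpha|\to \infty$. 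Grouping the terms in \eqref{SymbAsymptCompos} by $|\alpha|$ and applying Proposition~\ref{basicprops}(c), using the already established fact that $p_1 \# p_2$ lies in \emph{some} standard class and is asymptotic to this series in the standard sense, concludes that $p_1\# p_2 \in S_{1,\delta}^{\mu_1+\mu_2}(\Omega\times\R^q;(\C^{M_1},a_1),(\C^{M_3},a_3))$. This proves $P_1 \circ P_2 \in \Psi^{\mu_1+\mu_2}_{1,\delta}(\Omega;(\C^{M_1},a_1),(\C^{M_3},a_3))$.

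For the principal symbol identity, the same analysis applied to the tail $p_1 \# p_2 - p_1 p_2 \sim \sum_{|\alpha|\geq 1} \tfrac{1}{\alpha!}(\partial_\eta^\alpha p_1)(D_y^\alpha p_2)$ shows that the difference lies in $S_{1,\delta}^{\mu_1+\mu_2-(1-\delta)}(\Omega\times\R^q;(\C^{M_1},a_1),(\C^{M_3},a_3))$. Since $-(1-\delta) = -1+\delta$, this is precisely the order of the negligible part in the definition of the principal symbol, giving $\sym(P_1\circ P_2) = \sym(P_1)\sym(P_2)$.

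The main conceptual point, and the only place where anything might be nontrivial, is the interplay between the standard-class asymptotic expansion produced by classical theory and its reinterpretation in the twisted classes; this is handled cleanly by invoking Proposition~\ref{basicprops}(c), which was set up precisely for this purpose. Everything else is bookkeeping of orders.
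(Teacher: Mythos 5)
Your proof takes essentially the same route as the paper: reduce to the standard H\"ormander composition theorem via Proposition~\ref{basicprops}(b), then recover the twisted order term-by-term using parts (d), (e), and finally (c). The additional spelling-out of the principal-symbol claim is consistent with the paper, which simply records that it follows from the asymptotic expansion.
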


\begin{proof}
Using \eqref{InHormanderClass} we first view $P_j$ as an element of $\Psi^{\mu_j+\mu'}_{1,\delta}(\Omega;\C^{M_{3-j}},\C^{M_{4-j}})$ and conclude from the standard theory that $P_1\circ P_2\in \Psi_{1,\delta}^{\mu_1+\mu_2+2\mu'}(\Omega;\C^{M_1},\C^{M_3})$ with symbol $p_1\#p_2$ satisfying \eqref{SymbAsymptCompos}. By parts (d) and (e) of Proposition~\ref{basicprops}, 
\begin{equation*}
\bigl(\partial_{\eta}^{\alpha}p_1\bigr)\bigl(D_y^{\alpha}p_2\bigr)\in S^{\mu_1+\mu_2-(1-\delta)|\alpha|}(\Omega\times\R^q;(\C^{M_1},a_1),(\C^{M_3},a_3)),
\end{equation*}
hence by part (c) of the same proposition,
\begin{equation*}
p_1\#p_2\in S^{\mu_1+\mu_2}_{1,\delta}(\Omega\times\R^q;(\C^{M_1},a_1),(\C^{M_3},a_3))
\end{equation*}
as claimed. Consequently, $P_1\circ P_2\in \Psi^{\mu_1+\mu_2}_{1,\delta}(\Omega;\C^{M_1},a_1),(\C^{M_3},a_3)$ as claimed. The formula for the principal symbol of the composition follows immediately from \eqref{SymbAsymptCompos}.
\end{proof}

In the following proposition we shall make use of the following observation: 
Let $a\in C^\infty(\Omega;\End(\C^M))$ satisfy the conditions in \eqref{Edecomp}. Assume additionally that the direct decomposition in part (1) there is orthogonal with respect to the standard inner product of $\C^{M}$. Then the adjoint endomorphism $a^{\star} \in C^{\infty}(\Omega,\End(\C^M))$ satisfies both conditions in \eqref{Edecomp}. More precisely, the eigenvalue clusters associated with $a^{\star}$ are the complex conjugates of the ones associated with $a$, and the decomposition \eqref{Edecomp} is the same for both $a$ and $a^{\star}$.

\begin{proposition}\label{AdjointLocal}
Let $P \in \Psi^{\mu}_{1,\delta}(\Omega;(\C^{M_1},a_1),(\C^{M_2},a_2))$ have symbol $p(y,\eta)$. If the decompositions in part (1) of \eqref{Edecomp} are orthogonal, then the formal adjoint operator
\begin{equation*}
P^{\star} : C_c^{\infty}(\Omega;\C^{M_2}) \to C^{\infty}(\Omega;\C^{M_1})
\end{equation*}
defined by
$$
\int_{\Omega} \langle Pu(y),v(y) \rangle_{\C^{M_2}}\,dy = \int_{\Omega}\langle u(y),P^{\star}v(y) \rangle_{\C^{M_1}}\,dy
$$
for $u \in C_c^{\infty}(\Omega;\C^{M_1})$ and $v \in C_c^{\infty}(\Omega;\C^{M_2})$ belongs to
\begin{equation*}
\Psi^{\mu}_{1,\delta}(\Omega;(\C^{M_2},-a_2^{\star}),(\C^{M_1},-a_1^{\star}))
\end{equation*}
and has symbol
$$
q(y,\eta) \sim \sum_{\alpha\in\N_0^q}\frac{1}{\alpha!}D_y^{\alpha}\partial_{\eta}^{\alpha}p(y,\eta)^{\star}.
$$
In particular, we have $\sym(P^{\star}) = \sym(P)^{\star}$.
\end{proposition}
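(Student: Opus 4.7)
The plan is to bootstrap from the standard H\"ormander calculus via the embedding in Proposition~\ref{basicprops}(b), and then upgrade the resulting symbol to the twisted class using Remark~\ref{symbolreducedtostandard}. By Proposition~\ref{basicprops}(b), $P \in \Psi^{\mu+\mu'}_{1,\delta}(\Omega;\C^{M_1},\C^{M_2})$ for some $\mu'>0$, so the classical symbolic calculus delivers $P^{\star} \in \Psi^{\mu+\mu'}_{1,\delta}(\Omega;\C^{M_2},\C^{M_1})$ with symbol $q \sim \sum_{\alpha}\frac{1}{\alpha!}D_y^{\alpha}\partial_{\eta}^{\alpha}p(y,\eta)^{\star}$ modulo $S^{-\infty}$. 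It then remains to identify this asymptotic sum with an element of $S^{\mu}_{1,\delta}(\Omega\times\R^q;(\C^{M_2},-a_2^{\star}),(\C^{M_1},-a_1^{\star}))$.

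The crucial step is showing that $p^{\star}$ itself lies in $S^{\mu}_{1,\delta}(\Omega\times\R^q;(\C^{M_2},-a_2^{\star}),(\C^{M_1},-a_1^{\star}))$. Because $\varrho^{a_j(y)} = \exp(a_j(y)\log\varrho)$ with $\log\varrho\in\R$ for $\varrho>0$, we always have $(\varrho^{a_j(y)})^{\star} = \varrho^{a_j^{\star}(y)}$; in particular $(\langle\eta\rangle^{a_j(y)})^{\star}=\langle\eta\rangle^{a_j^{\star}(y)}$. The orthogonality hypothesis is what guarantees that the decomposition of $\C^{M_j}$ from (1) of \eqref{Edecomp} is the same for $a_j$ and $a_j^{\star}$, so $-a_j^{\star}$ again satisfies \eqref{Edecomp} (with conjugated clusters). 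By Remark~\ref{symbolreducedtostandard}, the hypothesis on $p$ reads
$$\langle\eta\rangle^{a_2(y)}p(y,\eta)\langle\eta\rangle^{-a_1(y)} \in S^{\mu}_{1,\delta}(\Omega\times\R^q;\C^{M_1},\C^{M_2}).$$
Taking pointwise adjoints preserves the standard H\"ormander class (the operation commutes with $\partial_{\eta}$ and turns each $D_y$ into $-D_y$, which is irrelevant for norm estimates), so
$$\langle\eta\rangle^{-a_1^{\star}(y)}p(y,\eta)^{\star}\langle\eta\rangle^{a_2^{\star}(y)} \in S^{\mu}_{1,\delta}(\Omega\times\R^q;\C^{M_2},\C^{M_1}).$$
A second invocation of Remark~\ref{symbolreducedtostandard}, now with source action $-a_2^{\star}$ and target action $-a_1^{\star}$, yields the desired membership of $p^{\star}$.

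With $p^{\star}$ placed in the twisted class, Proposition~\ref{basicprops}(d) puts the $\alpha$-th term $\frac{1}{\alpha!}D_y^{\alpha}\partial_{\eta}^{\alpha}p^{\star}$ in $S^{\mu-(1-\delta)|\alpha|}_{1,\delta}(\Omega\times\R^q;(\C^{M_2},-a_2^{\star}),(\C^{M_1},-a_1^{\star}))$, and Proposition~\ref{basicprops}(c) assembles these into a twisted symbol in $S^{\mu}_{1,\delta}$ of the same class that must agree with $q$ modulo $S^{-\infty}$. Hence $P^{\star} \in \Psi^{\mu}_{1,\delta}(\Omega;(\C^{M_2},-a_2^{\star}),(\C^{M_1},-a_1^{\star}))$, and the $\alpha=0$ term yields $\sym(P^{\star}) = \sym(P)^{\star}$. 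The main obstacle is really just organizational: tracking how the pointwise adjoint interchanges the source and target actions, and recognizing that the orthogonality hypothesis is precisely what is needed to make $-a_j^{\star}$ $\delta$-admissible with respect to the \emph{same} decomposition, so that Remark~\ref{symbolreducedtostandard} can be applied in both directions.
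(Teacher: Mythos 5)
Your argument is correct and matches the paper's proof in substance: both rely on the standard adjoint theorem applied after the embedding of Proposition~\ref{basicprops}(b), together with the observation that $(\langle\eta\rangle^{a_j})^{\star}=\langle\eta\rangle^{a_j^{\star}}$ places $p^{\star}$ in $S^{\mu}_{1,\delta}(\Omega\times\R^q;(\C^{M_2},-a_2^{\star}),(\C^{M_1},-a_1^{\star}))$, with orthogonality of the decomposition in \eqref{Edecomp} ensuring that $-a_j^{\star}$ is again $\delta$-admissible with the same decomposition. You spell out the verification via two applications of Remark~\ref{symbolreducedtostandard} rather than directly from Definition~\ref{symboldef}, and you make the asymptotic summation step explicit via parts (c) and (d) of Proposition~\ref{basicprops}, but this is the same route the paper takes, only written at a finer level of detail.
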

\begin{proof}
This again follows from Proposition~\ref{basicprops} and the standard theorem on formal adjoints in pseudodifferential calculus. Note that
$$
p(y,\eta)^{\star} \in S^{\mu}_{1,\delta}(\Omega\times\R^q;(\C^{M_2},-a_2^{\star}),(\C^{M_1},-a_1^{\star}))
$$
by Definition~\ref{symboldef} in view of the fact that $\bigl(\langle \eta \rangle^{a_j}\bigr)^{\star} = \langle \eta \rangle^{a_j^{\star}}$.
\end{proof}

\begin{definition}
A symbol $p(y,\eta) \in S^{\mu}_{1,\delta}(\Omega\times\R^q;(\C^{M_1},a_1),(\C^{M_2},a_2))$ is called \emph{elliptic} if for every compact set $K \Subset \Omega$ there exists $R > 0$ such that $p(y,\eta) : \C^{M_1} \to \C^{M_2}$ is invertible for all $y \in K$ and all $|\eta| \geq R$, and satisfies the estimate
$$
\|\langle \eta \rangle^{a_1(y)}p(y,\eta)^{-1}\langle \eta \rangle^{-a_2(y)}\| \leq C \langle \eta \rangle^{-\mu}
$$
for all $y \in K$ and all $|\eta| \geq R$ for some suitable constant $C > 0$.

An operator $P \in \Psi^{\mu}_{1,\delta}(\Omega;(\C^{M_1},a_1),(\C^{M_2},a_2))$ is elliptic if its symbol $p(y,\eta)$ is elliptic.
\end{definition}

\begin{example}
The symbol $\langle \eta \rangle^{a(y)}$ is trivially elliptic both as an element of $S_{1,\delta}^0(\Omega\times\R^q;(\C^{M},a),\C^{M})$ and $S_{1,\delta}^0(\Omega\times\R^q;\C^{M},(\C^{M},-a))$.
\end{example}

\begin{remark}\label{DNelliptdiscuss}
A symbol $p(y,\eta) \in S^{\mu}_{1,\delta}(\Omega\times\R^q;(\C^{M_1},a_1),(\C^{M_2},a_2))$ is elliptic in our sense if and only if the symbol
$$
\langle \eta \rangle^{a_2(y)}p(y,\eta)\langle \eta \rangle^{-a_1(y)} \in S_{1,\delta}^{\mu}(\Omega\times\R^q;\C^{M_1},\C^{M_2})
$$
is elliptic in the ordinary sense.

Moreover, $p(y,\eta)$ is elliptic if and only if there exists
\begin{equation*}
q(y,\eta) \in S^{-\mu}_{1,\delta}(\Omega\times\R^q;(\C^{M_2},a_2),(\C^{M_1},a_1))
\end{equation*}
such that
\begin{align*}
p(y,\eta)q(y,\eta) - 1 &\in S^{-\varepsilon}_{1,\delta}(\Omega\times\R^q;(\C^{M_2},a_2),(\C^{M_2},a_2)), \\
q(y,\eta)p(y,\eta) - 1 &\in S^{-\varepsilon}_{1,\delta}(\Omega\times\R^q;(\C^{M_1},a_1),(\C^{M_1},a_1))
\end{align*}
for some $\varepsilon > 0$. We can even arrange the remainders to be of order $-\infty$.

Consequently, our notion of ellipticity of symbols is not affected by perturbations of lower order, which implies that ellipticity for pseudodifferential operators $P \in \Psi^{\mu}_{1,\delta}(\Omega;(\C^{M_1},a_1),(\C^{M_2},a_2))$ is well defined. Moreover, it makes sense to say that $P$ is elliptic if its principal symbol $\sym(P)$ is elliptic, which means that any representative of $\sym(P)$ is elliptic.

Finally, if $p(y,\eta) \in S^{\mu}_{1,\delta}(\Omega\times\R^q;(\C^{M_1},a_1),(\C^{M_2},a_2))$ is twisted homogeneous of degree $\mu$ in the large, then $p(y,\eta)$ is elliptic if and only if the twisted homogeneous function $p_{(\mu)}(y,\eta) : \C^{M_1} \to \C^{M_2}$ determined by $p(y,\eta)$ is invertible for all $y \in \Omega$ and all $\eta \in \R^q\setminus 0$. Consequently, an operator $P \in \Psi^{\mu}_{1,\delta}(\Omega;(\C^{M_1},a_1),(\C^{M_2},a_2))$ that has a twisted homogeneous principal symbol is elliptic if and only if $\sym(P)(y,\eta)$ is invertible for all $y \in \Omega$ and all $\eta \neq 0$.
\end{remark}

\begin{example}\label{DNExample1}
Let
$$
a_j(y) = \begin{pmatrix} \mu_{j,1} & \cdots & 0 \\ \vdots & \ddots & \vdots \\ 0 & \cdots & \mu_{j,M} \end{pmatrix}
$$
with $\mu_{j,k} \in \R$ independent of $y \in \Omega$. Let
$$
p(y,\eta) = \begin{pmatrix} p_{1,1}(y,\eta) & \cdots & p_{1,M}(y,\eta) \\ \vdots & \ddots & \vdots \\ p_{M,1}(y,\eta) & \cdots & p_{M,M}(y,\eta) \end{pmatrix} \in S^0_{1,\delta}(\Omega\times\R^q;(\C^M,a_1),(\C^M,a_2)),
$$
i.e., all matrix entries satisfy $p_{k,l}(y,\eta) \in S_{1,\delta}^{\mu_{1,l}-\mu_{2,k}}(\Omega\times\R^q)$, see Example~\ref{DNExample}. Suppose that all $p_{k,l}(y,\eta)$ have homogeneous principal symbols $\sym(p_{k,l})(y,\eta)$ of degree $\mu_{1,l}-\mu_{2,k}$, and let
$$
\sym(p)(y,\eta) = \begin{pmatrix} \sym(p_{1,1})(y,\eta) & \cdots & \sym(p_{1,N})(y,\eta) \\ \vdots & \ddots & \vdots \\ \sym(p_{N,1})(y,\eta) & \cdots & \sym(p_{N,N})(y,\eta) \end{pmatrix}.
$$
Then $\sym(p)(y,\eta)$ is twisted homogeneous of degree zero with respect to the actions generated by $a_1$ and $a_2$, and an operator $P$ with symbol $p(y,\eta)$ is elliptic in our sense if and only if $\sym(p)(y,\eta)$ is invertible for all $y \in \Omega$ and all $\eta \neq 0$. This is just ellipticity in the sense of Douglis-Nirenberg \cite{DouglisNirenberg,Hormander1966} for a system represented by the symbol $p(y,\eta)$.
\end{example}

\begin{proposition}\label{parametrixlocal}
Let $P \in \Psi^{\mu}_{1,\delta}(\Omega;(\C^{M_1},a_1),(\C^{M_2},a_2))$ be elliptic. Then there exists a properly supported $Q \in \Psi^{-\mu}_{1,\delta}(\Omega;(\C^{M_2},a_2),(\C^{M_1},a_1))$ such that
$$
P\circ Q - 1 \in \Psi^{-\infty}(\Omega;\C^{M_2},\C^{M_2}) \textup{ and }
Q\circ P - 1 \in \Psi^{-\infty}(\Omega;\C^{M_1},\C^{M_1}).
$$
If $P$ has twisted homogeneous principal symbol so does $Q$, and we have
$$
\sym(Q)(y,\eta) = \sym(P)(y,\eta)^{-1} \textup{ on } \Omega\times(\R^q\setminus 0).
$$
\end{proposition}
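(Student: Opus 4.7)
The plan is the classical two-sided Neumann parametrix construction, carried out entirely within the twisted symbol classes so that $Q$ ends up in $\Psi^{-\mu}_{1,\delta}(\Omega;(\C^{M_2},a_2),(\C^{M_1},a_1))$. Let $p(y,\eta)$ be a representative of $\sym(P)$. By the second part of Remark~\ref{DNelliptdiscuss}, ellipticity of $p$ supplies a symbol $q_0 \in S^{-\mu}_{1,\delta}(\Omega\times\R^q;(\C^{M_2},a_2),(\C^{M_1},a_1))$ for which the \emph{pointwise} products $pq_0-1$ and $q_0p-1$ are of order $-\infty$. Using the composition expansion \eqref{SymbAsymptCompos}, I set
\[
r_R = p\#q_0 - 1, \qquad r_L = q_0\#p - 1.
\]
The $\alpha=0$ summand in \eqref{SymbAsymptCompos} equals $pq_0$ (resp.\ $q_0p$), which cancels the $1$ modulo a symbol of order $-\infty$; the terms with $|\alpha|\geq 1$ have order $-(1-\delta)|\alpha|$ in the appropriate twisted class by parts (d) and (e) of Proposition~\ref{basicprops}. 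Hence $r_R, r_L \in S^{-(1-\delta)}_{1,\delta}$.

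Next I would form the formal Neumann series
\[
\tilde q_R \sim q_0 \# \sum_{k=0}^{\infty}(-r_R)^{\#k}, \qquad \tilde q_L \sim \sum_{k=0}^{\infty}(-r_L)^{\#k}\#q_0,
\]
in which the $k$th summand has order $-\mu - (1-\delta)k \to -\infty$. Part (c) of Proposition~\ref{basicprops} supplies the asymptotic sums \emph{inside the twisted classes}, placing $\tilde q_R,\tilde q_L \in S^{-\mu}_{1,\delta}$. A telescoping identity shows $p\#\tilde q_R - 1$ and $\tilde q_L\#p - 1$ lie in $S^{-\infty}$, and the usual associativity computation $\tilde q_L \equiv \tilde q_L \#(p\#\tilde q_R) \equiv (\tilde q_L\#p)\#\tilde q_R \equiv \tilde q_R \pmod{S^{-\infty}}$ yields a common two-sided inverse modulo smoothing. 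I would then replace the operator with symbol $\tilde q_R$ by a properly supported one by cutting off its Schwartz kernel away from the diagonal; this modifies the symbol only by an element of $S^{-\infty}$ and so preserves membership in the twisted class.

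For the twisted homogeneous principal symbol assertion, suppose $\sym(P) = p_{(\mu)}$ is twisted homogeneous of degree $\mu$. Taking inverses in \eqref{twistedeq} yields
\[
p_{(\mu)}(y,\varrho\eta)^{-1} = \varrho^{-\mu}\varrho^{-a_1(y)}p_{(\mu)}(y,\eta)^{-1}\varrho^{a_2(y)},
\]
so $p_{(\mu)}^{-1}$ is twisted homogeneous of degree $-\mu$ with respect to $a_2$ in the domain and $a_1$ in the range. Excising near $\eta=0$ and appealing to Proposition~\ref{homogeneousissymbol} produces a choice of $q_0 \in S^{-\mu}_{1,\delta}$ that is twisted homogeneous in the large and agrees with $p_{(\mu)}^{-1}$ for large $|\eta|$. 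Since all corrections introduced in the construction above have strictly lower order, the principal symbol of $Q$ equals that of $q_0$, giving $\sym(Q)(y,\eta) = \sym(P)(y,\eta)^{-1}$ on $\Omega\times(\R^q\setminus 0)$.

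The main technical worry would be leakage: the Neumann iterates and the associated asymptotic sums must remain in the twisted symbol class, not merely in the ambient H\"ormander class provided by the embedding \eqref{InHormanderClass}. This is precisely what parts (c), (d), and (e) of Proposition~\ref{basicprops} were established to secure; with those in hand the parametrix construction follows the classical template without further obstruction.
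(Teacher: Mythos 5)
Your proposal is correct and follows exactly the route the paper indicates: a two-sided Neumann-series parametrix construction carried out inside the twisted symbol classes, with Proposition~\ref{basicprops} (parts (c), (d), (e)), Proposition~\ref{localcomposition}, and Remark~\ref{DNelliptdiscuss} supplying precisely the closure properties that make the classical argument go through verbatim. The paper states this in one sentence; your write-up simply expands the standard template, including the handling of twisted homogeneous principal symbols via inversion of the twisted homogeneity relation~\eqref{twistedeq} and Proposition~\ref{homogeneousissymbol}.
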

\begin{proof}
The standard proof based on symbolic inversion modulo lower order and the formal Neumann series argument applies literally in this situation. The basic symbol properties in Proposition~\ref{basicprops}, the composition theorem Proposition~\ref{localcomposition}, and the discussion of ellipticity in Remark~\ref{DNelliptdiscuss} ensure that this is indeed the case.
\end{proof}

\begin{proposition}\label{ChangeofCoord}
Let $\chi : \Omega' \to \Omega$ be a $C^{\infty}$-diffeomorphism, and let
\begin{equation*}
P \in \Psi^{\mu}_{1,\delta}(\Omega;(\C^{M_1},a_1),(\C^{M_2},a_2)).
\end{equation*}
Then the operator pull-back
$$
\chi^*P : C_c^{\infty}(\Omega';\C^{M_1}) \to C^{\infty}(\Omega';\C^{M_2})
$$
is an operator in $\Psi^{\mu}_{1,\delta}(\Omega';(\C^{M_1},\chi^*a_1),(\C^{M_2},\chi^*a_2))$. If $P$ has symbol $p(y,\eta)$, then $\chi^*P$ has symbol $p_{\chi}(y',\eta')$ that satisfies
$$
p_{\chi}(y',\eta') \sim \sum_{\alpha\in\N_0^q}\frac{1}{\alpha !} \bigl(\partial^{\alpha}_{\eta}p\bigr)(\chi(y'),(d_{\chi(y')}\chi^{-1})^t\eta')\Phi_{\alpha}(y',\eta'),
$$
where $\Phi_{\alpha}(y',\eta')$ is a polynomial in $\eta'$ with coefficients in $C^{\infty}(\Omega')$ of degree at most $|\alpha|/2$ that depends only on the diffeomorphism $\chi$, and $\Phi_{0}(y',\eta') \equiv 1$.

The principal symbols satisfy
$$
\sym(\chi^*P)(y',\eta') = \sym(P)(\chi(y'),(d_{\chi(y')}\chi^{-1})^t\eta'),
$$
which in general needs to be interpreted as an identity of representatives modulo $S_{1,\delta}^{\mu-1+\delta}(\Omega\times\R^q;(\C^{M_1},\chi^*a_1),(\C^{M_2},\chi^*a_2))$. If $P$ has twisted homogeneous principal symbol so does $\chi^*P$, in which case this identity becomes an identity for these symbols.

The operator $\chi^*P$ is elliptic if $P$ is elliptic.
\end{proposition}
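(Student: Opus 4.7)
The plan is to reduce to the classical change-of-coordinates theorem for H\"ormander pseudodifferential operators. By (b) of Proposition~\ref{basicprops}, $P \in \Psi^{\mu+\mu'}_{1,\delta}(\Omega;\C^{M_1},\C^{M_2})$ for some $\mu'>0$, so the classical theorem immediately yields $\chi^*P \in \Psi^{\mu+\mu'}_{1,\delta}(\Omega';\C^{M_1},\C^{M_2})$ with symbol $p_\chi$ given by the stated asymptotic expansion in terms of universal polynomials $\Phi_\alpha$ of degree $\leq|\alpha|/2$, $\Phi_0 \equiv 1$. The real work is to show that $p_\chi$ in fact lies in the twisted class $S^\mu_{1,\delta}(\Omega'\times\R^q;(\C^{M_1},\chi^*a_1),(\C^{M_2},\chi^*a_2))$, and then to read off the principal symbol and ellipticity.

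The core step is a pullback lemma: if $r \in S^\nu_{1,\delta}(\Omega\times\R^q;(\C^{M_1},a_1),(\C^{M_2},a_2))$ and $T(y'):=(d_{\chi(y')}\chi^{-1})^t$, then $r_\chi(y',\eta'):=r(\chi(y'),T(y')\eta')$ lies in $S^\nu_{1,\delta}(\Omega'\times\R^q;(\C^{M_1},\chi^*a_1),(\C^{M_2},\chi^*a_2))$. To prove this I would use Remark~\ref{symbolreducedtostandard} in both directions, so as to turn the question into one about standard H\"ormander classes. Introducing $\tilde r(y,\eta):=\langle\eta\rangle^{a_2(y)}r(y,\eta)\langle\eta\rangle^{-a_1(y)}\in S^\nu_{1,\delta}(\Omega\times\R^q;\C^{M_1},\C^{M_2})$ and using that $\langle\eta'\rangle^{\pm a}$ and $\langle T(y')\eta'\rangle^{\pm a}$ commute whenever they involve the same endomorphism $a$, one obtains
\[
\langle\eta'\rangle^{a_2(\chi(y'))} r_\chi(y',\eta')\langle\eta'\rangle^{-a_1(\chi(y'))}=G_2(y',\eta')\,\tilde r(\chi(y'),T(y')\eta')\,G_1(y',\eta'),
\]
where each $G_j$ is a power $b^{\pm a_j(\chi(y'))}$ of the positive scalar symbol $b(y',\eta'):=\langle\eta'\rangle/\langle T(y')\eta'\rangle \in S^0(\Omega'\times\R^q)$. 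Since $b$ is bounded above and bounded away from $0$ locally uniformly in $y'$, Lemma~\ref{groupsymbol} places each $G_j$ in the standard $S^0(\Omega'\times\R^q;\End(\C^{M_j}))$; the middle factor lies in standard $S^\nu_{1,\delta}(\Omega'\times\R^q;\C^{M_1},\C^{M_2})$ by the classical H\"ormander invariance under the smooth fibrewise-linear substitution $(y',\eta')\mapsto(\chi(y'),T(y')\eta')$. The product is therefore in standard $S^\nu_{1,\delta}$, and Remark~\ref{symbolreducedtostandard} run backwards completes the lemma.

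With this in hand, I would apply it to $r=\partial_\eta^\alpha p \in S^{\mu-|\alpha|}_{1,\delta}$ (part (d) of Proposition~\ref{basicprops}) and multiply by the scalar $\Phi_\alpha$ (trivially in the twisted class of order $\leq|\alpha|/2$, since a scalar multiple commutes with any group action). Part (e) of Proposition~\ref{basicprops} places the $\alpha$-th term of the expansion in $S^{\mu-|\alpha|/2}_{1,\delta}$ of the target twisted class, and asymptotic summability (part (c)) puts $p_\chi$ in $S^\mu_{1,\delta}(\Omega'\times\R^q;(\C^{M_1},\chi^*a_1),(\C^{M_2},\chi^*a_2))$, as needed. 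The identity for the principal symbols comes from the $\alpha=0$ term, and it passes to twisted homogeneous principal symbols without change, since $T(y')\eta'$ is linear in $\eta'$ and $\chi^*a_j(y')=a_j(\chi(y'))$ mesh directly with Definition~\ref{twisteddef}. Ellipticity of $\chi^*P$ follows from Remark~\ref{DNelliptdiscuss}: the decomposition used in the pullback lemma transfers the invertibility of $p$ for large $|\eta|$ and the bound on $p^{-1}$ to $r_\chi$ (with $r=p$), as the $G_j$ are invertible in $S^0$.

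The main obstacle is the pullback lemma. Everything else is a routine consequence of the classical calculus together with the structural results already established in Sections~\ref{sec-deltaAdmiss}--\ref{sec-localsymbols}; but the lemma itself requires reconciling the twisted conjugations by $\langle\eta\rangle^{\pm a(y)}$ with the cotangent substitution $\eta\mapsto T(y')\eta'$ coming from the diffeomorphism, a reconciliation that is made possible precisely by Lemma~\ref{groupsymbol} applied to the scalar ratio $\langle\eta'\rangle/\langle T(y')\eta'\rangle$.
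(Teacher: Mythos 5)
Your proposal is correct and follows essentially the same route as the paper's own proof: reduce to the standard H\"ormander change-of-coordinates theorem, isolate the pullback lemma as the key step, pass through Remark~\ref{symbolreducedtostandard} in both directions, and handle the mismatch between $\langle\eta'\rangle^{\pm a_j}$ and $\langle T(y')\eta'\rangle^{\pm a_j}$ by applying Lemma~\ref{groupsymbol} to the positive scalar elliptic symbol $b(y',\eta')=\langle\eta'\rangle/\langle T(y')\eta'\rangle$ (your $G_j$ factorization is exactly the paper's left/right multiplication by $b^{\pm(\chi^*a_j)(y')}$). The only cosmetic difference is that you spell out the term-by-term application to $\partial_\eta^\alpha p$ and the role of $\Phi_\alpha$ as a scalar symbol of order $\leq|\alpha|/2$, whereas the paper states more briefly that, by the form of the expansion, it suffices to prove the pullback lemma for a single symbol.
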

\begin{proof}
By the standard change of coordinates theorem in pseudodifferential calculus and Proposition~\ref{basicprops} we get that $\chi^*P$ is a pseudodifferential operator with symbol $p_{\chi}(y',\eta')$ that has the stated asymptotic expansion. By the form of this expansion, in order to complete the argument, it suffices to show that for every $p(y,\eta) \in S^{\mu}_{1,\delta}(\Omega\times\R^q;(\C^{M_1},a_1),(\C^{M_2},a_2))$ we have
$$
p(\chi(y'),(d_{\chi(y')}\chi^{-1})^t\eta') \in S^{\mu}_{1,\delta}(\Omega'\times\R^q;(\C^{M_1},\chi^*a_1),(\C^{M_2},\chi^*a_2)).
$$
By Remark~\ref{symbolreducedtostandard} we know that
$$
\langle \eta \rangle^{a_2(y)}p(y,\eta)\langle \eta \rangle^{-a_1(y)} \in S^{\mu}_{1,\delta}(\Omega\times\R^q;\C^{M_1},\C^{M_2}).
$$
Consequently,
\begin{equation}\label{pullbacksymb}
\begin{gathered}
\langle (d_{\chi(y')}\chi^{-1})^t\eta' \rangle^{(\chi^*a_2)(y')}p(\chi(y'),(d_{\chi(y')}\chi^{-1})^t\eta')\langle (d_{\chi(y')}\chi^{-1})^t\eta' \rangle^{-(\chi^*a_1)(y')} \\
\in S^{\mu}_{1,\delta}(\Omega'\times\R^q;\C^{M_1},\C^{M_2}).
\end{gathered}
\end{equation}
Now let
$$
b(y',\eta') = \frac{\langle \eta' \rangle}{\langle (d_{\chi(y')}\chi^{-1})^t\eta' \rangle} \in S^0(\Omega'\times\R^q).
$$
Then $b(y',\eta') > 0$ for all $(y',\eta')$, and $b(y',\eta')$ is elliptic. By Lemma~\ref{groupsymbol} we thus have
$$
b(y',\eta')^{\pm (\chi^*a_j)(y')} \in S^0(\Omega'\times\R^q;\C^{M_j},\C^{M_j}), \quad j=1,2.
$$
Multiplying \eqref{pullbacksymb} from the left and right with these terms then shows that
\begin{equation}\label{pullbacksymb1}
\langle \eta' \rangle^{(\chi^*a_2)(y')}p(\chi(y'),(d_{\chi(y')}\chi^{-1})^t\eta')\langle \eta' \rangle^{-(\chi^*a_1)(y')} \in S^{\mu}_{1,\delta}(\Omega'\times\R^q;\C^{M_1},\C^{M_2}),
\end{equation}
and consequently
$$
p(\chi(y'),(d_{\chi(y')}\chi^{-1})^t\eta') \in S^{\mu}_{1,\delta}(\Omega'\times\R^q;(\C^{M_1},\chi^*a_1),(\C^{M_2},\chi^*a_2))
$$
by Remark~\ref{symbolreducedtostandard}.

If $p(y,\eta)$ is elliptic, then $\langle \eta \rangle^{a_2(y)}p(y,\eta)\langle \eta \rangle^{-a_1(y)}$ is elliptic of order $\mu$ in the ordinary sense. Consequently, also the symbol in \eqref{pullbacksymb} is elliptic of order $\mu$ in the standard sense. Because $b(y',\eta')^{\pm (\chi^*a_j)(y')}$ is elliptic of order zero, we get that the symbol in \eqref{pullbacksymb1} is necessarily elliptic, and consequently $p(\chi(y'),(d_{\chi(y')}\chi^{-1})^t\eta')$ is elliptic.
\end{proof}

Proposition~\ref{ChangeofCoord} establishes invariance of the spaces
$$
\Psi^{\mu}_{1,\delta}(\Omega;(\C^{M_1},a_1),(\C^{M_2},a_2))
$$
under changes of coordinates. In conjunction with the following proposition we will have paved the way for the global definition of these spaces in Section~\ref{sec-global calculus}. The setting and notation in the statement are those of Section~\ref{sec-deltaAdmiss}. 

\begin{proposition}\label{InvarianceChangeOfFrame}
Let, for $j=1,2$, $\phi_j$ and $\psi_j$ be $\delta$-admissible trivializations of the vector bundles $E_j\to \Y$ over domains $\Omega$ and $\Omega'$ of local charts of $\Y$. Let $a_{j,\phi_j}=\phi_ja_j\phi_j^{-1}$, likewise $a_{j,\psi_j}=\psi_ja_j\psi_j^{-1}$ be similarly defined. View $\Omega\cap \Omega'$ as an open subset of $\R^q$ by way of either of the local charts. Define
\begin{equation*}
\Theta_j:C^\infty(\Omega\cap \Omega',\C^{M_j})\to C^\infty(\Omega\cap \Omega',\C^{M_j}),\quad \Theta_j(u)=(\psi_j\phi_j^{-1})u.
\end{equation*}
Then
\begin{equation*}
P\mapsto \Theta_2\circ P\circ\Theta_1^{-1}
\end{equation*}
is a bijection
\begin{multline*}
\Psi^{\mu}_{1,\delta}(\Omega\cap\Omega';(\C^{M_1},a_{1,\phi_1}),(\C^{M_2},a_{2,\phi_2})) \to\\ \Psi^{\mu}_{1,\delta}(\Omega\cap \Omega';(\C^{M_1},a_{1,\psi_1}),(\C^{M_2},a_{2,\psi_2})).
\end{multline*}
Further,
\begin{equation*}
\sym(\Theta_2\circ P\circ\Theta_1^{-1})= \psi_2\phi_2^{-1}\sym(P)\phi_1\psi_1^{-1}.
\end{equation*}
\end{proposition}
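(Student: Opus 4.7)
The proof will be a direct synthesis of Corollary~\ref{ChangeofFrame} and the composition theorem Proposition~\ref{localcomposition}. The key observation is that each $\Theta_j$ is nothing but multiplication by the function $\psi_j\phi_j^{-1}\in C^\infty(\Omega\cap\Omega',\End(\C^{M_j}))$. Viewed as a symbol constant in the fiber variable $\eta$, this multiplier lies in
$$
S^0_{1,\delta}(\Omega\cap\Omega';(\C^{M_j},a_{j,\phi_j}),(\C^{M_j},a_{j,\psi_j}))
$$
by Corollary~\ref{ChangeofFrame}. Consequently $\Theta_j\in\Psi^0_{1,\delta}$ of the same twisted class, with a Schwartz kernel supported on the diagonal, so $\Theta_j$ is automatically properly supported. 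The same argument, with the roles of $\phi_j$ and $\psi_j$ swapped, shows that $\Theta_j^{-1}$ is multiplication by $\phi_j\psi_j^{-1}$ and lies in $\Psi^0_{1,\delta}(\Omega\cap\Omega';(\C^{M_j},a_{j,\psi_j}),(\C^{M_j},a_{j,\phi_j}))$.

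The plan is then to write $\Theta_2\circ P\circ\Theta_1^{-1}$ as a triple composition of properly supported pseudodifferential operators and apply Proposition~\ref{localcomposition} twice. Tracking the twisted classes through the two compositions yields membership in $\Psi^{\mu}_{1,\delta}(\Omega\cap\Omega';(\C^{M_1},a_{1,\psi_1}),(\C^{M_2},a_{2,\psi_2}))$. Bijectivity is immediate: the inverse map $P'\mapsto \Theta_2^{-1}\circ P'\circ \Theta_1$ has exactly the same structure (with $\phi_j$ and $\psi_j$ interchanged), and $\Theta_j\circ\Theta_j^{-1}=\Theta_j^{-1}\circ\Theta_j=\mathrm{Id}$ as operators.

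For the principal symbol formula, Proposition~\ref{localcomposition} gives $\sym(P_1\circ P_2)=\sym(P_1)\sym(P_2)$. Since multiplication by a function $f\in C^\infty$ has principal symbol equal to $f$ itself (the symbol is constant in $\eta$, so the full asymptotic expansion of the multiplier symbol is just $f$, with no lower-order contributions), we get
$$
\sym(\Theta_2\circ P\circ\Theta_1^{-1})=(\psi_2\phi_2^{-1})\,\sym(P)\,(\psi_1\phi_1^{-1})^{-1}=\psi_2\phi_2^{-1}\,\sym(P)\,\phi_1\psi_1^{-1},
$$
with the understanding that this is an identity of representatives modulo the appropriate lower-order class (or an identity of twisted homogeneous functions when $P$ has twisted homogeneous principal symbol).

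I do not expect any substantive obstacle; the content of the proposition is essentially the combination of the symbolic statement of Corollary~\ref{ChangeofFrame} (that transition functions behave as zero-order twisted symbols) with the closure of the calculus under composition. The only minor point to verify is that the two intermediate classes that appear in the middle of the triple composition match up correctly — but this is forced by the fact that $\Theta_1^{-1}$ has range action $a_{1,\phi_1}$ and $\Theta_2$ has domain action $a_{2,\phi_2}$, which are precisely the actions attached to $P$.
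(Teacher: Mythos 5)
Your proof is correct and follows exactly the same route as the paper: observe via Corollary~\ref{ChangeofFrame} that each $\Theta_j$ lies in $\Psi^0_{1,\delta}(\Omega\cap \Omega';(\C^{M_j},a_{j,\phi_j}),(\C^{M_j},a_{j,\psi_j}))$, note it is invertible, and apply Proposition~\ref{localcomposition}. The extra details you supply (proper support, the explicit inverse map, and the computation of the principal symbol) are all consistent with the paper's more terse argument.
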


\begin{proof}
By Corollary~\ref{ChangeofFrame}, 
\begin{equation*}
\Theta_j\in \Psi^0_{1,\delta}(\Omega\cap \Omega';(\C^{M_j},a_{j,\phi_j}),(\C^{M_j},a_{j,\psi_j})).
\end{equation*}
Evidently $\Theta_j$ is invertible, so the conclusions follow from Proposition~\ref{localcomposition}.
\end{proof}


\section{Sobolev spaces and local regularity}\label{sec-localspaces}

We remind the reader of our standing assumption, stated in the first paragraph of Section~\ref{sec-localsymbols}.

\begin{definition}\label{SobolevSpaceLocal}
Let $s \in \R$, and let $\Lambda_{s}$ be a properly supported pseudodifferential operator on $\Omega$ with (total left) symbol $\langle \eta \rangle^{s+a(y)} = \langle \eta \rangle^s\langle \eta \rangle^{a(y)}$. Define
\begin{gather*}
H^{s+a}_{\textup{loc}}(\Omega;\C^M) = \set{u \in \mathcal D'(\Omega;\C^M): \Lambda_s u \in L^2_{\textup{loc}}(\Omega;\C^M)}, \\
H^{s+a}_{\textup{comp}}(\Omega;\C^M) = H^{s+a}_{\textup{loc}}(\Omega;\C^M) \cap {\mathcal E}'(\Omega;\C^M).
\end{gather*}
\end{definition}

Note that $\Lambda_s$ is defined without reference to a specific $\delta$, so the spaces just defined are independent of $\delta$. That in fact $\Lambda_s\in \Psi^{s}_{1,\delta}(\Omega;(\C^M,a),\C^M)$ because of our assumption on $\delta$-admissibility is of no consequence to the definition itself. The following proposition and subsequent corollary show in particular that the spaces are independent of the specific choice of operator $\Lambda_s$.

\begin{proposition}\label{LocalMappingProps}
Let $P \in \Psi^{\mu}_{1,\delta}(\Omega;(\C^{M_1},a_1),(\C^{M_2},a_2))$. Then
$$
P : H^{s+a_1}_{\textup{comp}}(\Omega;\C^{M_1}) \to H^{s-\mu+a_2}_{\textup{loc}}(\Omega;\C^{M_2}).
$$
If $P$ is properly supported, then
$$
P : \begin{cases}
H^{s+a_1}_{\textup{comp}}(\Omega;\C^{M_1}) \to H^{s-\mu+a_2}_{\textup{comp}}(\Omega;\C^{M_2}), \\
H^{s+a_1}_{\textup{loc}}(\Omega;\C^{M_1}) \to H^{s-\mu+a_2}_{\textup{loc}}(\Omega;\C^{\M_2}).
\end{cases}
$$
\end{proposition}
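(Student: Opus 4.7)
The plan is to reduce the mapping property to the $L^2$-boundedness of an ordinary H\"ormander-class operator of type $(1,\delta)$. Using Definition~\ref{SobolevSpaceLocal}, the statement $u \in H^{s+a_1}_{\textup{comp}}(\Omega;\C^{M_1})$ is equivalent to $v := \Lambda_{s,1}u \in L^2_{\textup{comp}}$, where $\Lambda_{s,1}$ is properly supported with symbol $\langle \eta \rangle^{s}\langle \eta \rangle^{a_1(y)}$; likewise $Pu \in H^{s-\mu+a_2}_{\textup{loc}}(\Omega;\C^{M_2})$ is equivalent to $\Lambda_{s-\mu,2}(Pu) \in L^2_{\textup{loc}}$. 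So the task is to control $\Lambda_{s-\mu,2}Pu$ in terms of $v$, and the natural route is to pass $v$ through $P$ via a parametrix of $\Lambda_{s,1}$.

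By Lemma~\ref{KKinvsymbol} the operator $\Lambda_{s,1}$ belongs to $\Psi^{s}_{1,\delta}(\Omega;(\C^{M_1},a_1),\C^{M_1})$, and it is elliptic in our sense (its symbol is a scalar elliptic factor $\langle\eta\rangle^s$ times the trivially elliptic $\langle\eta\rangle^{a_1(y)}$ of the example following Remark~\ref{DNelliptdiscuss}). Proposition~\ref{parametrixlocal} therefore produces a properly supported parametrix $\tilde\Lambda_{-s,1}\in \Psi^{-s}_{1,\delta}(\Omega;\C^{M_1},(\C^{M_1},a_1))$ with $\tilde\Lambda_{-s,1}\Lambda_{s,1} = I + R_0$, $R_0 \in \Psi^{-\infty}$. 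Substituting $u = \tilde\Lambda_{-s,1}v - R_0 u$ into $\Lambda_{s-\mu,2}Pu$ gives
\begin{equation*}
\Lambda_{s-\mu,2}\,Pu \;=\; Q v \;-\; \Lambda_{s-\mu,2}\,P R_0\,u,\qquad Q := \Lambda_{s-\mu,2}\circ P\circ \tilde\Lambda_{-s,1}.
\end{equation*}
Now apply Proposition~\ref{localcomposition} twice: the action on $\C^{M_2}$ matches between $\Lambda_{s-\mu,2}$ and $P$, the action on $\C^{M_1}$ matches between $P$ and $\tilde\Lambda_{-s,1}$, the three orders sum to $0$, and the exterior actions are trivial, whence $Q \in \Psi^{0}_{1,\delta}(\Omega;\C^{M_1},\C^{M_2})$---i.e.\ an ordinary pseudodifferential operator of H\"ormander type $(1,\delta)$ of order $0$. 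If $P$ is not itself properly supported, replace it by a properly supported representative modulo $\Psi^{-\infty}$ when forming the composition; the difference contributes only a smoothing term.

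Since $\delta < 1$, the Calder\'on-Vaillancourt theorem gives $Q : L^2_{\textup{comp}} \to L^2_{\textup{loc}}$, so $Qv \in L^2_{\textup{loc}}(\Omega;\C^{M_2})$. The residual $\Lambda_{s-\mu,2}PR_0 u$ lies in $C^{\infty}$, since $PR_0 \in \Psi^{-\infty}$ applied to compactly supported $u$ produces a smooth section that $\Lambda_{s-\mu,2}$ preserves. Hence $\Lambda_{s-\mu,2}Pu \in L^2_{\textup{loc}}$, i.e.\ $Pu \in H^{s-\mu+a_2}_{\textup{loc}}(\Omega;\C^{M_2})$, proving the first mapping property. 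The properly-supported variants follow from standard support bookkeeping: a properly supported $P$ sends compact supports to compact supports, and for $u \in H^{s+a_1}_{\textup{loc}}$ the action of $P$ on any fixed compact set agrees with its action on a compactly supported cutoff of $u$, which is already covered by the compact case. The only nontrivial step is the verification that $Q$ lies in the \emph{ordinary} H\"ormander class---this is exactly where the action labels cancel telescopically at the junctions of the composition, which is the whole point of having built the calculus as in Proposition~\ref{localcomposition}; after this cancellation the rest is standard, with Calder\'on-Vaillancourt for type-$(1,\delta)$ operators as the only external input.
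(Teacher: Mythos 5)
Your argument is correct and follows essentially the same route as the paper's proof: substitute $u = \tilde\Lambda_{-s,1}(\Lambda_{s,1}u) - R_0 u$ via a properly supported parametrix, observe that $\Lambda_{s-\mu,2}\circ P\circ\tilde\Lambda_{-s,1}$ collapses (by the composition theorem and the telescoping of the action labels) to an ordinary order-zero operator of type $(1,\delta)$ that is $L^2$-bounded, and dispose of the smoothing remainder. The only cosmetic difference is that the paper runs the argument for $P$ properly supported and $u \in H^{s+a_1}_{\textup{loc}}$ first, then derives the other cases by splitting off a smoothing part, whereas you start from the comp-to-loc case and fix up support issues at the end; both orderings work.
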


\begin{proof}
Let $\Lambda^{(1)}_s \in \Psi^s_{1,\delta}(\Omega;(\C^{M_1},a_1),\C^{M_1})$ be a properly supported pseudodifferential operator with symbol $\langle \eta \rangle^{s+a_1(y)}$, and let $\Lambda^{(2)}_{s-\mu} \in \Psi^{s-\mu}_{1,\delta}(\Omega;(\C^{M_2},a_2),\C^{M_2})$ be properly supported with symbol $\langle \eta \rangle^{s-\mu+a_2(y)}$. Since $\Lambda^{(1)}_s$ is elliptic there exists a properly supported $Q \in \Psi^{-s}_{1,\delta}(\Omega;\C^{M_1},(\C^{M_1},a_1))$ such that $Q\circ \Lambda^{(1)}_s = 1 + R$, where $R \in \Psi^{-\infty}(\Omega;\C^{M_1},\C^{M_1})$, see Proposition~\ref{parametrixlocal}.

Now let $P \in \Psi^{\mu}_{1,\delta}(\Omega;(\C^{M_1},a_1),(\C^{M_2},a_2))$ be properly supported, and let $u \in H^{s+a_1}_{\textup{loc}}(\Omega;\C^{M_1})$. Then
\begin{align*}
\Lambda^{(2)}_{s-\mu}(Pu) &= (\Lambda^{(2)}_{s-\mu} \circ P)((Q\circ\Lambda_s^{(1)})u - Ru) \\
&= (\Lambda^{(2)}_{s-\mu} \circ P \circ Q)(\Lambda_s^{(1)}u) - (\Lambda^{(2)}_{s-\mu} \circ P)(Ru).
\end{align*}
The operator $\Lambda^{(2)}_{s-\mu} \circ P \circ Q$ belongs to $\Psi^0_{1,\delta}(\Omega;\C^{M_1},\C^{M_2})$ by Proposition~\ref{localcomposition} and is properly supported, and consequently
$$
\Lambda^{(2)}_{s-\mu} \circ P \circ Q : L^2_{\textup{loc}}(\Omega;\C^{M_1}) \to L^2_{\textup{loc}}(\Omega;\C^{M_2}).
$$
This shows that $(\Lambda^{(2)}_{s-\mu} \circ P \circ Q)(\Lambda_s^{(1)}u) \in L^2_{\textup{loc}}(\Omega;\C^{M_2})$. On the other hand, $Ru \in C^{\infty}(\Omega;\C^{M_1})$, and thus $(\Lambda^{(2)}_{s-\mu} \circ P)(Ru) \in C^{\infty}(\Omega;\C^{M_2})$. In conclusion, we get that $\Lambda^{(2)}_{s-\mu}(Pu) \in L^2_{\textup{loc}}(\Omega;\C^{M_2})$, hence $Pu \in H^{s-\mu+a_2}_{\textup{loc}}(\Omega;\C^{M_2})$ as claimed.

The remaining mapping properties stated in the proposition follow from what we just proved by decomposing a general pseudodifferential operator in a properly supported and a smoothing part, and from the fact that properly supported pseudodifferential operators map compactly supported distributions to compactly supported distributions.
\end{proof}

\begin{corollary}\label{SobSpProps}
\begin{enumerate}[(a)]
\item $H^{s+a}_{\textup{loc}}(\Omega;\C^M) \subset H^{t+a}_{\textup{loc}}(\Omega;\C^M)$ for $ s \geq t$.
\item There exist $m,m' \geq 0$ such that
\begin{equation*}
H^{s+m'}_{\textup{loc}}(\Omega;\C^M) \subset H^{s+a}_{\textup{loc}}(\Omega;\C^M) \subset H^{s-m}_{\textup{loc}}(\Omega;\C^M)
\end{equation*}
for all $s \in \R$.
\item Let $P \in \Psi^s_{1,\delta}(\Omega;(\C^M,a),\C^M)$ be properly supported and elliptic. Then
$$
H^{s+a}_{\textup{loc}}(\Omega;\C^M) = \set{u \in \mathcal D'(\Omega;\C^M): Pu \in L^2_{\textup{loc}}(\Omega;\C^M)}.
$$
\end{enumerate}
\end{corollary}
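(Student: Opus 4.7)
The proofs of all three parts reduce to combining Proposition~\ref{LocalMappingProps} with the parametrix construction in Proposition~\ref{parametrixlocal}, together with the Dunford-integral estimate $\|\langle\eta\rangle^{\pm a(y)}\| \leq C_K \langle\eta\rangle^m$ (valid locally uniformly in $y \in K \Subset \Omega$ for a suitable $m = m(K,a) \geq 0$), which already appears in the proof of Proposition~\ref{basicprops}(b). The central observation behind parts (a) and (b) is that, as a consequence of this bound, the identity map $I$ on $\C^M$ belongs to $\Psi^{\mu}_{1,\delta}(\Omega;(\C^M,a_1),(\C^M,a_2))$ for arbitrary smooth $a_1, a_2$ as soon as $\mu$ is large enough to compensate for $\langle\eta\rangle^{a_2(y)}\langle\eta\rangle^{-a_1(y)}$, and Proposition~\ref{LocalMappingProps} applied to this identity then compares the various Sobolev spaces.

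For part (a), when $s \geq t$ one has $\|\langle\eta\rangle^{a(y)} I \langle\eta\rangle^{-a(y)}\| = 1 \leq \langle\eta\rangle^{s-t}$, so $I \in \Psi^{s-t}_{1,\delta}(\Omega;(\C^M,a),(\C^M,a))$, and Proposition~\ref{LocalMappingProps} gives $I : H^{s+a}_{\textup{loc}} \to H^{t+a}_{\textup{loc}}$. For part (b), taking $m$ as in the Dunford bound, the identity lies both in $\Psi^m_{1,\delta}(\Omega;\C^M,(\C^M,a))$ and in $\Psi^m_{1,\delta}(\Omega;(\C^M,a),\C^M)$, the required estimates being precisely the Dunford bounds for $\langle\eta\rangle^{a(y)}$ and $\langle\eta\rangle^{-a(y)}$ respectively. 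Applying Proposition~\ref{LocalMappingProps} to $I$ in each direction produces the two inclusions $H^{s+m}_{\textup{loc}} \subset H^{s+a}_{\textup{loc}} \subset H^{s-m}_{\textup{loc}}$, so $m' = m$ suffices.

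The main content is in part (c). The forward implication $H^{s+a}_{\textup{loc}} \subset \{u : Pu \in L^2_{\textup{loc}}\}$ is a direct application of Proposition~\ref{LocalMappingProps} to $P \in \Psi^{s}_{1,\delta}(\Omega;(\C^M,a),\C^M)$, whose target action is trivial, giving $P : H^{s+a}_{\textup{loc}} \to H^{0}_{\textup{loc}} = L^2_{\textup{loc}}$. For the reverse, Proposition~\ref{parametrixlocal} supplies a properly supported parametrix $Q \in \Psi^{-s}_{1,\delta}(\Omega;\C^M,(\C^M,a))$ with $QP = I + R$ and $R \in \Psi^{-\infty}$; then any $u$ with $Pu \in L^2_{\textup{loc}}$ satisfies $u = Q(Pu) - Ru$. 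Proposition~\ref{LocalMappingProps} applied to $Q$ places $Q(Pu) \in H^{s+a}_{\textup{loc}}$, while $Ru \in C^{\infty}(\Omega;\C^M)$ (since $R$ is smoothing and properly supported) and therefore also lies in $H^{s+a}_{\textup{loc}}$ by part (b). I do not anticipate a serious obstacle; the one point that deserves a moment's care is ensuring that the parametrix used is the twisted one from $\C^M$ to $(\C^M,a)$ rather than a standard parametrix, so that the symbol of $Q$ carries the action $a$ in the range and the mapping property in Proposition~\ref{LocalMappingProps} yields the twisted target space $H^{s+a}_{\textup{loc}}$.
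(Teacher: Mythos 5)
Your proof is correct and follows essentially the same route as the paper: parts (a) and (b) come down to recognizing the identity map as an operator in appropriate $\Psi^{\mu}_{1,\delta}$ classes (backed by the Dunford bound on $\langle\eta\rangle^{\pm a(y)}$) and applying Proposition~\ref{LocalMappingProps}, and part (c) uses the parametrix from Proposition~\ref{parametrixlocal}. The only cosmetic differences are that in (b) you apply $I$ in both directions while the paper routes the first inclusion through $\Lambda_s$ and Proposition~\ref{basicprops}(b), and in (c) you bound $u = Q(Pu) - Ru$ directly via mapping properties while the paper verifies $\Lambda_s u \in L^2_{\textup{loc}}$ from the definition; both are substantively equivalent.
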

\begin{proof}
(a) follows from Proposition~\ref{LocalMappingProps} because the identity map is an operator of class $L_{1,\delta}^{\mu}(\Omega;(\C^M,a),(\C^M,a))$ for all $\mu \geq 0$.

By Proposition~\ref{basicprops} there exists $m' > 0$ such that
$$
\Psi^s_{1,\delta}(\Omega;(\C^M,a),\C^M) \subset \Psi^{s+m'}_{1,\delta}(\Omega;\C^M,\C^M)
$$
for all $s \in \R$. Consequently, $\Lambda_s : H^{s+m'}_{\textup{loc}}(\Omega;\C^M) \to L^2_{\textup{loc}}(\Omega;\C^M)$, where $\Lambda_s$ is as in Definition~\ref{SobolevSpaceLocal}, and therefore $H^{s+m'}_{\textup{loc}}(\Omega;\C^M) \subset H^{s+a}_{\textup{loc}}(\Omega;\C^M)$. By Proposition~\ref{basicprops} there exists an $m \geq 0$ such that the identity map belongs to $\Psi^m_{1,\delta}(\Omega;(\C^M,a),\C^M)$. Consequently, $\textup{Id} : H^{s+a}_{\textup{loc}}(\Omega;\C^M) \to H^{s-m}_{\textup{loc}}(\Omega;\C^M)$ for all $s \in \R$ by Proposition~\ref{LocalMappingProps}. This proves (b).

Now let $P \in \Psi^s_{1,\delta}(\Omega;(\C^M,a),\C^M)$ be properly supported and elliptic. By Proposition~\ref{LocalMappingProps} we have
$$
H^{s+a}_{\textup{loc}}(\Omega;\C^M) \subset \set{u \in \mathcal D'(\Omega;\C^M): Pu \in L^2_{\textup{loc}}(\Omega;\C^M)}.
$$
To finish the proof of the corollary it remains to show the opposite inclusion. By Proposition~\ref{parametrixlocal} there exists a properly supported $Q \in \Psi^{-s}_{1,\delta}(\Omega;\C^M,(\C^M,a))$ such that $Q\circ P = 1 + R$ with $R \in \Psi^{-\infty}(\Omega;\C^M,\C^M)$. Let $\Lambda_s$ be as in Definition~\ref{SobolevSpaceLocal}, and let $u \in \mathcal D'(\Omega;\C^M)$ be such that $Pu \in L^2_{\textup{loc}}(\Omega:\C^M)$. Then
$$
\Lambda_s u = (\Lambda_s\circ Q)(Pu) - \Lambda_s(Ru).
$$
The operator $\Lambda_s\circ Q \in \Psi^0_{1,\delta}(\Omega;\C^M,\C^M)$ is properly supported, and consequently $(\Lambda_s\circ Q)(Pu) \in L^2_{\textup{loc}}(\Omega;\C^M)$. Clearly also $\Lambda_s(Ru) \in L^2_{\textup{loc}}(\Omega;\C^M)$ because $Ru \in C^{\infty}(\Omega;\C^M)$. This shows that $\Lambda_s u \in L^2_{\textup{loc}}(\Omega;\C^M)$, and so $u \in H^{s+a}_{\textup{loc}}(\Omega;\C^M)$.
\end{proof}

\begin{corollary}\label{LocalReg}
Let $P \in \Psi^{\mu}_{1,\delta}(\Omega;(\C^{M_1},a_1),(\C^{M_2},a_2))$ be properly supported and elliptic. Let $u \in \mathcal D'(\Omega;\C^{M_1})$ be such that $Pu = f \in H^{s+a_2}_{\textup{loc}}(\Omega;\C^{M_2})$ for some $s \in \R$. Then $u \in H^{s+\mu+a_1}_{\textup{loc}}(\Omega;\C^{M_1})$.
\end{corollary}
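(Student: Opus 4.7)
The plan is a direct parametrix argument, modeled on the last part of the proof of Corollary~\ref{SobSpProps}(c). Because $P$ is elliptic, Proposition~\ref{parametrixlocal} furnishes a properly supported parametrix
\begin{equation*}
Q \in \Psi^{-\mu}_{1,\delta}(\Omega;(\C^{M_2},a_2),(\C^{M_1},a_1))
\end{equation*}
with $Q\circ P - 1 = R \in \Psi^{-\infty}(\Omega;\C^{M_1},\C^{M_1})$. Note that $R$ is automatically properly supported, since both $Q\circ P$ and the identity are.

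Applying $Q$ to the equation $Pu = f$ yields the identity
\begin{equation*}
u = Qf - Ru
\end{equation*}
in $\mathcal D'(\Omega;\C^{M_1})$; both terms on the right are well defined because $Q$ and $R$ are properly supported. It then remains to show that each of $Qf$ and $Ru$ lies in $H^{s+\mu+a_1}_{\textup{loc}}(\Omega;\C^{M_1})$.

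For the first term, the mapping property in Proposition~\ref{LocalMappingProps} applied to the properly supported operator $Q$ of order $-\mu$ gives
\begin{equation*}
Q : H^{s+a_2}_{\textup{loc}}(\Omega;\C^{M_2}) \to H^{s+\mu+a_1}_{\textup{loc}}(\Omega;\C^{M_1}),
\end{equation*}
so $Qf$ lies in the required space. For the second term, $R$ is a properly supported smoothing operator, so $Ru \in C^{\infty}(\Omega;\C^{M_1})$; and Corollary~\ref{SobSpProps}(b) guarantees that $C^{\infty}(\Omega;\C^{M_1}) \subset H^{t+a_1}_{\textup{loc}}(\Omega;\C^{M_1})$ for every $t$, so in particular $Ru \in H^{s+\mu+a_1}_{\textup{loc}}(\Omega;\C^{M_1})$.

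There is no serious obstacle here; the whole argument is bookkeeping once the parametrix, the mapping property, and the proper supports are in place. The only point worth watching is precisely that: ensuring that $Q$, $P$, and $R$ are simultaneously properly supported so that $Qf$ and $Ru$ make sense as distributions on all of $\Omega$ and the identity $u = Qf - Ru$ is a legitimate manipulation rather than a purely local one.
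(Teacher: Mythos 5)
Your proof is correct and follows essentially the same route as the paper's: apply a properly supported parametrix $Q$ from Proposition~\ref{parametrixlocal}, use Proposition~\ref{LocalMappingProps} to place $Qf$ in $H^{s+\mu+a_1}_{\textup{loc}}$, and absorb the smoothing remainder $Ru\in C^\infty$. The paper writes the identity as $Qf = u + Ru$ rather than $u = Qf - Ru$, but the argument and the ingredients are the same.
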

\begin{proof}
Let $Q \in \Psi^{-\mu}_{1,\delta}(\Omega;(\C^{M_2},a_2),(\C^{M_1},a_1))$ be a properly supported parametrix of $P$, see Proposition~\ref{parametrixlocal}. Then
$$
Q f = Q(Pu) = u + Ru \in H^{s+\mu+a_1}_{\textup{loc}}(\Omega;\C^{M_1})
$$
by Proposition~\ref{LocalMappingProps}, where $R \in \Psi^{-\infty}(\Omega;\C^{M_1},\C^{M_1})$ is properly supported. Hence $Ru \in C^{\infty}(\Omega;\C^{M_1})$, and thus $u \in H^{s+\mu+a_1}_{\textup{loc}}(\Omega;\C^{M_1})$ as asserted.
\end{proof}

\begin{example}
Let
$$
a_j(y) = \begin{pmatrix} \mu_{j,1} & \cdots & 0 \\ \vdots & \ddots & \vdots \\ 0 & \cdots & \mu_{j,M} \end{pmatrix}
$$
with $\mu_{j,k} \in \R$ independent of $y \in \Omega$, $j=1,2$. In this case,
$$
H^{s+a_j}_{\textup{loc}}(\Omega;\C^M) = \bigoplus_{k=1}^M H^{s+\mu_{j,k}}_{\textup{loc}}(\Omega).
$$
Let
$$
p(y,\eta) = \begin{pmatrix} p_{1,1}(y,\eta) & \cdots & p_{1,M}(y,\eta) \\ \vdots & \ddots & \vdots \\ p_{M,1}(y,\eta) & \cdots & p_{M,M}(y,\eta) \end{pmatrix} \in S^0_{1,\delta}(\Omega\times\R^q;(\C^M,a_1),(\C^M,a_2))
$$
be elliptic, and let $P$ be properly supported with symbol $p(y,\eta)$. By Example~\ref{DNExample1} this means that $P$ is elliptic in the sense of Douglis-Nirenberg. Corollary~\ref{LocalReg} in this case reduces to the following standard statement about regularity of solutions of $Pu = f$: If
\begin{align*}
f &= (f_1,\ldots,f_M) \in \bigoplus_{k=1}^N H^{s+\mu_{2,k}}_{\textup{loc}}(\Omega), \\
\intertext{then}
u &= (u_1,\ldots,u_M) \in \bigoplus_{k=1}^N H^{s+\mu_{1,k}}_{\textup{loc}}(\Omega).
\end{align*}
\end{example}


\section{The global calculus}\label{sec-global calculus}

Throughout this and the remaining sections of this work let $\Y$ be a smooth compact manifold without boundary of dimension $q$. We consider complex vector bundles $E \to \Y$ that are equipped with an endomorphism $a \in C^{\infty}(\Y;\End(E))$, and will typically denote the pair by $(E,a)$. The multiplicative group generated by $a$ is denoted by $\varrho^a \in C^{\infty}(\Y;\Aut(E))$, $\varrho > 0$, and $\pi$ denotes the canonical projection $T^*\Y\setminus 0\to \Y$.

\begin{definition}\label{globalops}
Let $(E_j,a_j)$, $j = 1,2$, be vector bundles over $\Y$ equipped with endomorphisms, let $0 < \delta < 1$, and let $\mu \in \R$. By $\Psi^{\mu}_{1,\delta}(\Y;(E_1,a_1),(E_2,a_2))$ we denote the space of all pseudodifferential operators
$$
P : C^{\infty}(\Y;E_1) \to C^{\infty}(\Y;E_2)
$$
of type $(1,\delta)$ with the following property:

Let $\Omega \subset \Y$ be the domain of local chart over which there are $\delta$-admissible trivializations $\phi_j : E_{j,\Omega} \to \Omega\times\C^{M_j}$ relative to $a_j$, see Section~\ref{sec-deltaAdmiss}. By way of the chart we view $\Omega$ as an open subset of $\R^q$ and require then that $P$ over $\Omega$ be represented by an operator
$$
P_{\Omega} : C_c^{\infty}(\Omega;\C^{M_1}) \to C^{\infty}(\Omega;\C^{M_2})
$$
of class $\Psi^{\mu}_{1,\delta}(\Omega;(\C^{M_1},a_{1,\phi_1}),(\C^{M_2},a_{2,\phi_2}))$.
\end{definition}

We noted in Section~\ref{sec-deltaAdmiss} that every $y_0 \in \Y$ is contained in the domain of a local chart $\Omega$ such that both bundles $E_1$ and $E_2$ have $\delta$-admissible trivializations over $\Omega$. Proposition~\ref{ChangeofCoord} (change of variables) and Proposition~\ref{InvarianceChangeOfFrame} (change of $\delta$-admissible trivializations) ensure that the class $\Psi^{\mu}_{1,\delta}(\Y;(E_1,a_1),(E_2,a_2))$ is well defined.

The class $\Psi^{\mu}_{1,\delta}(\Y;(E_1,a_1),(E_2,a_2))$ is also well defined when $\Y$ is just an open manifold, and basic properties and notions such as composition (under the usual support condition), ellipticity, existence of parametrices, and so on, are valid. However, as indicated above, we will restrict our attention here to the case where $\Y$ is closed.

Let $P \in \Psi^{\mu}_{1,\delta}(\Y;(E_1,a_1),(E_2,a_2))$. By Section~\ref{sec-deltaAdmiss} and compactness, $\Y$ has a finite covering $\Y = \bigcup_{k=1}^L \Omega_k$ by domains of local charts $\Omega_k \subset \Y$ over which there exist $\delta$-admissible trivializations of both $(E_1,a_1)$ and $(E_2,a_2)$. Let $\set{\varphi_k: k=1,\ldots,L}$ be a partition of unity subordinate to the covering of $\Y$, and choose functions $\psi_k \in C_c^{\infty}(\Omega_k)$ such that $\psi_k \equiv 1$ in a neighborhood of the support of $\varphi_k$. Write
\begin{equation}\label{PDarstellung}
P = \sum_{k=1}^L \varphi_k P \psi_k + R,
\end{equation}
where $R = \sum_{k=1}^L \varphi_k P (1-\psi_k) \in \Psi^{-\infty}(\Y;E_1,E_2)$. The operators $\varphi_k P \psi_k$ have Schwartz kernels with compact support in $\Omega_k\times\Omega_k$, and in view of Definition~\ref{globalops} their structure is described by the local calculus discussed in the previous sections. Conversely, using charts, the partition of unity, and $\delta$-admissible trivializations of the bundles, operators in $\Psi^{\mu}_{1,\delta}(\Y;(E_1,a_1),(E_2,a_2))$ can be patched out of operators in the local calculus modulo $\Psi^{-\infty}(\Y;E_1,E_2)$.

\begin{definition}
Let $P \in \Psi^{\mu}_{1,\delta}(\Y;(E_1,a_1),(E_2,a_2))$. We say that $P$ has twisted homogeneous principal symbol if every $y_0 \in \Y$ is contained in the domain $\Omega$ of a local chart such that there exist $\delta$-admissible trivializations $\phi_j : E_{j,\Omega} \to \Omega\times\C^{M_j}$ relative to $a_j$ over $\Omega$ such that the induced operator
$$
P_{\Omega} : C_c^{\infty}(\Omega;\C^{M_1}) \to C^{\infty}(\Omega;\C^{M_2})
$$
of class $\Psi^{\mu}_{1,\delta}(\Omega;(\C^{M_1},a_{1,\phi_1}),(\C^{M_2},a_{2,\phi_2}))$ has twisted homogeneous principal symbol.

The local twisted homogeneous principal symbols join and invariantly define a function $\sym(P)$ on $T^*\Y\setminus 0$ taking values in $\Hom(\pi^*E_1,\pi^*E_2)$ that satisfies the twisted homogeneity relation
\begin{equation}\label{twistedglobal}
\sym(P)(\varrho\eta_y) = \varrho^{\mu}\varrho^{-(\pi^*a_2)|_{\pi^*E_{2,y}}}\sym(P)(\eta_y)\varrho^{(\pi^*a_1)|_{\pi^*E_{1,y}}}
\end{equation}
for all $\eta_y \in T_y^*\Y\setminus 0$ and $\varrho > 0$. The global $\sym(P)$ on $T^*\Y\setminus 0$ is called the twisted homogeneous principal symbol of $P$.
\end{definition}

Let $p \in C^{\infty}(T^*\Y\setminus 0;\Hom(\pi^*E_1,\pi^*E_2))$ be twisted homogeneous of degree $\mu \in \R$, i.e., relation \eqref{twistedglobal} is satisfied. Then there exists an operator
$$
P \in \Psi^{\mu}_{1,\delta}(\Y;(E_1,a_1),(E_2,a_2))
$$
such that $\sym(P) = p$. The standard argument applies here to see this: In local coordinates and with respect to $\delta$-admissible trivializations of the bundles, we can define $P$ as the quantization of $\xi(\eta)p(y,\eta)$, where $\xi \in C^{\infty}(\R^q)$ is an excision function of the origin. The global $P$ is obtained by patching the local operators using a partition of unity.

\begin{theorem}[Composition Theorem]\label{composition}
Let $P_1 \in \Psi^{\mu_1}_{1,\delta}(\Y;(E_2,a_2),(E_3,a_3))$ and $P_2 \in \Psi^{\mu_2}_{1,\delta}(\Y;(E_1,a_1),(E_2,a_2))$. Then
$$
P_1\circ P_2 \in \Psi^{\mu_1+\mu_2}_{1,\delta}(\Y;(E_1,a_1),(E_3,a_3)).
$$
If both $P_1$ and $P_2$ have twisted homogeneous principal symbols, so does $P_1\circ P_2$ and we have $\sym(P_1\circ P_2) = \sym(P_1)\sym(P_2)$ on $T^*\Y\setminus 0$.
\end{theorem}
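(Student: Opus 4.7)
The plan is to reduce to the local composition theorem (Proposition~\ref{localcomposition}) via the partition-of-unity decomposition \eqref{PDarstellung}, using Proposition~\ref{InvarianceChangeOfFrame} to reconcile mismatched $\delta$-admissible trivializations on chart overlaps. First I would pick a finite open cover $\Y = \bigcup_{k=1}^L \Omega_k$ by chart domains over each of which \emph{all three} bundles $E_1$, $E_2$, $E_3$ simultaneously admit $\delta$-admissible trivializations $\phi_j^{(k)}$; such a cover exists by the discussion in Section~\ref{sec-deltaAdmiss} together with compactness of $\Y$. Subordinate to this cover choose a partition of unity $\{\varphi_k\}$ and cutoffs $\psi_k \in C_c^{\infty}(\Omega_k)$ with $\psi_k \equiv 1$ near $\textup{supp}\,\varphi_k$. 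Writing $P_i = \sum_k \varphi_k P_i \psi_k + R_i$ with $R_i \in \Psi^{-\infty}$ as in \eqref{PDarstellung} gives
\[
P_1 \circ P_2 = \sum_{k,l} \varphi_k P_1 \psi_k \varphi_l P_2 \psi_l + S
\]
with $S \in \Psi^{-\infty}(\Y;E_1,E_3)$.

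Next, for each pair $(k,l)$ for which $\textup{supp}\,\psi_k \cap \textup{supp}\,\varphi_l \neq \emptyset$ (other pairs contribute zero), I would analyze $T_{kl} = \varphi_k P_1 \psi_k \varphi_l P_2 \psi_l$ within the chart $\Omega_k \cap \Omega_l$. The factor $\varphi_l P_2 \psi_l$ has Schwartz kernel compactly supported in $\Omega_l$; relative to the trivializations $(\phi_1^{(l)}, \phi_2^{(l)})$, its local representation lies in the corresponding local class. Restricting to $\Omega_k \cap \Omega_l$ and using Proposition~\ref{InvarianceChangeOfFrame} to change the $E_2$-trivialization from $\phi_2^{(l)}$ to $\phi_2^{(k)}$ (with $\Theta_1 = \textup{Id}$ and $\Theta_2 = \phi_2^{(k)}(\phi_2^{(l)})^{-1}$) places it in $\Psi^{\mu_2}_{1,\delta}(\Omega_k \cap \Omega_l;(\C^{M_1},a_{1,\phi_1^{(l)}}),(\C^{M_2},a_{2,\phi_2^{(k)}}))$. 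The factor $\varphi_k P_1 \psi_k$ is already in the local class on $\Omega_k$ with trivializations $(\phi_2^{(k)}, \phi_3^{(k)})$. The intermediate trivialization now matches, so Proposition~\ref{localcomposition} places $T_{kl}$ in the local class of order $\mu_1 + \mu_2$ with source $(\C^{M_1},a_{1,\phi_1^{(l)}})$ and target $(\C^{M_3},a_{3,\phi_3^{(k)}})$ on $\Omega_k \cap \Omega_l$.

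To confirm that $P_1 \circ P_2 \in \Psi^{\mu_1+\mu_2}_{1,\delta}(\Y;(E_1,a_1),(E_3,a_3))$, I would verify the defining condition (Definition~\ref{globalops}) on an arbitrary chart supporting $\delta$-admissible trivializations of $E_1$ and $E_3$: because of invariance under coordinate changes (Proposition~\ref{ChangeofCoord}) and under changes of $\delta$-admissible trivializations (Proposition~\ref{InvarianceChangeOfFrame}), the specific trivializations used in constructing the $T_{kl}$ are immaterial, and the local representation of $\sum_{k,l}T_{kl}$ there belongs to the appropriate local class. For the principal symbol claim, the expansion \eqref{SymbAsymptCompos} applied in each local chart shows that twisted homogeneous principal symbols multiply locally. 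The transformation law for symbols in Proposition~\ref{InvarianceChangeOfFrame} (conjugation by $\psi_j\phi_j^{-1}$) is precisely the cocycle behavior built into the definition of the global twisted homogeneous principal symbol, so the local products patch to a well-defined section on $T^*\Y \setminus 0$ satisfying the twisted homogeneity \eqref{twistedglobal} of degree $\mu_1 + \mu_2$, and $\sym(P_1 \circ P_2) = \sym(P_1)\sym(P_2)$ follows.

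The main obstacle is the trivialization mismatch on overlaps: different chart neighborhoods may carry different $\delta$-admissible trivializations of the intermediate bundle $E_2$, while Proposition~\ref{localcomposition} presupposes a single trivialization per chart. Bridging this gap is precisely the role of Proposition~\ref{InvarianceChangeOfFrame} (with Corollary~\ref{ChangeofFrame} supplying the required regularity of transition functions); once it is invoked, the remainder of the argument is standard partition-of-unity bookkeeping together with the invariance statements already established in the local calculus.
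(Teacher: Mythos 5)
Your overall strategy — localize, apply the local composition theorem (Proposition~\ref{localcomposition}), invoke Propositions~\ref{ChangeofCoord} and~\ref{InvarianceChangeOfFrame} for invariance — is in the right spirit, and the observation that the whole problem turns on reconciling the $E_2$-trivialization on chart overlaps is the correct thing to worry about. However, the way you propose to carry this out contains a genuine gap.

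The problem is with the cross-terms $T_{kl} = \varphi_k P_1 \psi_k \varphi_l P_2 \psi_l$. The Schwartz kernel of $T_{kl}$ is supported in $\textup{supp}\,\varphi_k \times \textup{supp}\,\psi_l \subset \Omega_k \times \Omega_l$, which is \emph{not} contained in $(\Omega_k\cap\Omega_l)^2$, so $T_{kl}$ is not an operator on the chart $\Omega_k\cap\Omega_l$ in the sense required by the local calculus. The same is true for the factor $\varphi_l P_2 \psi_l$: its kernel lives in $\textup{supp}\,\varphi_l \times \textup{supp}\,\psi_l \subset \Omega_l^2$, and \emph{restricting} such an operator to $\Omega_k\cap\Omega_l$ is not an operation the calculus provides. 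Consequently the application of Proposition~\ref{InvarianceChangeOfFrame} to the "restriction" of $\varphi_l P_2 \psi_l$ is not legitimate as stated — that proposition manipulates operators in $\Psi^{\mu}_{1,\delta}(\Omega\cap\Omega';\dots)$, i.e., operators whose Schwartz kernel sits in $(\Omega\cap\Omega')^2$. Moreover, even after this step you would be composing an operator declared to be on $\Omega_k\cap\Omega_l$ with one declared to be on $\Omega_k$; Proposition~\ref{localcomposition} composes operators on one and the same open set. All of this can in principle be repaired by inserting further cutoffs $\rho, \rho', \sigma \in C_c^\infty(\Omega_k\cap\Omega_l)$ and arguing that $T_{kl} - \rho T_{kl}\rho'$ and the various commutator-type errors are smoothing (because $T_{kl}$ is a composition of type $(1,\delta)$ pseudodifferential operators and hence has a smooth off-diagonal kernel), but none of this bookkeeping appears in your sketch, and it is precisely where the argument has to be careful.

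The paper takes a cleaner route that avoids the reconciliation of $E_2$-trivializations altogether. Instead of decomposing $P_1$ and $P_2$ separately into sums attached to possibly different charts, it localizes the \emph{composition} in a single chart $\Omega$ over which all three bundles admit $\delta$-admissible trivializations. Choosing nested cutoffs $\varphi_1,\dots,\varphi_4 \in C_c^\infty(\Omega)$ with $\varphi_{j+1}\equiv 1$ near $\textup{supp}\,\varphi_j$, one writes
$$
\varphi_1(P_1\circ P_2)\varphi_2 = (\varphi_1 P_1 \varphi_3)(\varphi_4 P_2 \varphi_2) + \varphi_1 P_1 (1-\varphi_3)P_2\varphi_2.
$$
Since $\varphi_1(1-\varphi_3)$ has disjoint support, the second term is smoothing, while both factors in the first term have Schwartz kernels compactly supported in $\Omega\times\Omega$ and are expressed in the \emph{same} trivializations for $E_1, E_2, E_3$. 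Proposition~\ref{localcomposition} then applies directly with no change of frame, and a partition of unity over such $\Omega$'s closes the argument. I recommend you adopt this single-chart decomposition of $P_1\circ P_2$ rather than the double partition-of-unity decomposition with cross-terms: it eliminates both the support issue and the need to invoke Proposition~\ref{InvarianceChangeOfFrame} inside the proof.
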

\begin{proof}
Let $\Omega \subset \Y$ be the domain of a local chart such that all bundles admit $\delta$-admissible trivializations $\psi_j : E_{j,\Omega} \to \Omega\times\C^{M_j}$ relative to $a_j$ over $\Omega$.
Let $\varphi_j \in C_c^{\infty}(\Omega)$, $j=1,\ldots,4$, be such that $\varphi_{j+1} \equiv 1$ in a neighborhood of the support of $\varphi_j$. Write
$$
\varphi_1(P_1\circ P_2)\varphi_2 = (\varphi_1 P_1 \varphi_3)(\varphi_4 P_2 \varphi_2) + \varphi_1 P_1 (1-\varphi_3)P_2\varphi_2.
$$
The operator $\varphi_1 P_1 (1-\varphi_3)P_2\varphi_2$ is of class $\Psi^{-\infty}(\Y;E_1,E_3)$, and both operators
$(\varphi_1 P_1 \varphi_3)$ and $(\varphi_4 P_2 \varphi_2)$ have Schwartz kernels supported in $\Omega\times\Omega$, and with respect to the trivializations $\psi_j$ of the bundles these operators are represented by operators in the classes
$$
\Psi^{\mu_1}_{1,\delta}(\Omega;(\C^{M_2},a_{2,\psi_2}),(\C^{M_3},a_{3,\psi_3})) \textup{ and }
\Psi^{\mu_2}_{1,\delta}(\Omega;(\C^{M_1},a_{1,\psi_1}),(\C^{M_2},a_{2,\psi_2})),
$$
respectively. From the local composition theorem (Proposition~\ref{localcomposition}) we obtain that $(\varphi_1 P_1 \varphi_3)(\varphi_4 P_2 \varphi_2)$ is locally represented by an operator of class
$$
\Psi^{\mu_1+\mu_2}_{1,\delta}(\Omega;(\C^{M_1},a_{1,\psi_1}),(\C^{M_3},a_{3,\psi_3})).
$$
If both $P_1$ and $P_2$ have twisted homogeneous principal symbols, so does the operator $(\varphi_1 P_1 \varphi_3)(\varphi_4 P_2 \varphi_2)$, and by our choices of the cut-offs $\varphi_j \in C_c^{\infty}(\Omega)$ we see that
$$
\sym(\varphi_1(P_1\circ P_2)\varphi_2) = \sym((\varphi_1 P_1 \varphi_3)(\varphi_4 P_2 \varphi_2)) = \varphi_1 \sym(P_1)\sym(P_2).
$$
Covering $\Y$ with suitable coordinate neighborhoods $\Omega$ and using a subordinate partition of unity proves the claim.
\end{proof}

\begin{theorem}[Formal Adjoints]\label{AdjointsGlobal}
Let $P \in \Psi^{\mu}_{1,\delta}(\Y;(E_1,a_1),(E_2,a_2))$. Fix a smooth positive density $\m$ on $\Y$, and let
$$
[ \cdot,\cdot ]_{j,y} : E_{j,y} \times F_{j,y} \to \C, \quad y \in \Y,\; j=1,2,
$$
be nondegenerate sesquilinear forms depending smoothly on $y \in \Y$. Let $a_j^{\sharp} \in C^{\infty}(\Y;\End(F_j))$ be the adjoint endomorphism of $a_j \in C^{\infty}(\Y;\End(E_j))$ with respect to $[\cdot,\cdot]_j$, $j=1,2$, i.e., $a_j^{\sharp}$ satisfies
$$
[ a_je_j,f_j ]_j = [ e_j,a_j^{\sharp}f_j ]_j
$$
for all sections $e_j \in C^{\infty}(\Y;E_j)$ and $f_j \in C^{\infty}(\Y;F_j)$. Then 
\begin{equation*}
P : C^{\infty}(\Y;E_1) \to C^{\infty}(\Y;E_2)
\end{equation*}
has a formal adjoint $P^{\sharp} : C^{\infty}(\Y;F_2) \to C^{\infty}(\Y;F_1)$ given by
$$
\int [ Pu(y),v(y)]_{2,y}\,d\m(y) = \int [ u(y),P^{\sharp}v(y) ]_{1,y}\,d\m(y)
$$
for $u \in C^{\infty}(\Y;E_1)$ and $v \in C^{\infty}(\Y;F_2)$, and $P^{\sharp} \in \Psi^{\mu}_{1,\delta}(\Y;(F_2,-a_2^{\sharp}),(F_1,-a_1^{\sharp}))$. If $P$ has twisted homogeneous principal symbol $\sym(P)$, then $P^{\sharp}$ has twisted homogeneous principal symbol $\sym(P^{\sharp}) = \sym(P)^{\sharp}$, where $\sym(P)^{\sharp} : \pi^*F_2 \to \pi^*F_1$ is the fiberwise formal adjoint of $\sym(P) : \pi^*E_1 \to \pi^*E_2$ with respect to the lifted pairings $[ \cdot,\cdot ]_j$ on $\pi^*E_j \times \pi^*F_j$, $j = 1,2$. The latter means that
$$
[\sym(P)e_1,f_2]_2 = [e_1,\sym(P)^{\sharp}f_2]_1
$$
for all sections $e_1 \in C^{\infty}(T^*\Y\setminus 0,\pi^*E_1)$ and $f_2 \in C^{\infty}(T^*\Y\setminus 0,\pi^*F_2)$.
\end{theorem}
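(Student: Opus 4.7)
The plan is to reduce to the local calculus via the partition of unity decomposition \eqref{PDarstellung}, apply Proposition~\ref{AdjointLocal}, and patch back using Definition~\ref{globalops}. Write $P=\sum_{k=1}^L\varphi_k P\psi_k+R$ with $R\in\Psi^{-\infty}$ (hence $R^\sharp$ is smoothing for any smooth density and pairings) and each $\varphi_k P\psi_k$ supported in a chart $\Omega_k$ carrying $\delta$-admissible trivializations of all four bundles $E_1,E_2,F_1,F_2$. It then suffices to analyze one such compactly supported piece.

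Fix such a chart $\Omega$ and $\delta$-admissible trivializations $\phi_j:E_{j,\Omega}\to\Omega\times\C^{M_j}$ of $(E_j,a_j)$. By a constant change of fiber basis I may arrange that the cluster decomposition $\C^{M_j}=\bigoplus_k V_{j,k}$ of \eqref{Edecomp} is orthogonal for the standard Hermitian inner product; this preserves $\delta$-admissibility since the decomposition itself is unchanged as an abstract splitting. Nondegeneracy of $[\cdot,\cdot]_j$ then yields, after possibly shrinking $\Omega$, a smooth dual trivialization $\tilde\phi_j:F_{j,\Omega}\to\Omega\times\C^{M_j}$ characterized by
\[
[\phi_j^{-1}\tilde u,\tilde\phi_j^{-1}\tilde v]_{j,y}=\langle\tilde u,\tilde v\rangle_{\C^{M_j}}\quad\text{for all }\tilde u,\tilde v\in\C^{M_j}.
\]
A direct calculation gives $a^\sharp_{j,\tilde\phi_j}:=\tilde\phi_j a_j^\sharp\tilde\phi_j^{-1}=(a_{j,\phi_j})^\star$ (standard Hermitian transpose), which implies that $\tilde\phi_j$ is a $\delta$-admissible trivialization for $(F_j,-a_j^\sharp)$: the orthogonal decomposition $\C^{M_j}=\bigoplus_k V_{j,k}$ is preserved by $(a_{j,\phi_j})^\star$ because it is both $a_{j,\phi_j}$-invariant and orthogonal, and the cluster sets of $-(a_{j,\phi_j})^\star|_{V_{j,k}}$ are obtained from those of $a_{j,\phi_j}|_{V_{j,k}}$ by complex conjugation and sign reversal.

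Writing $\m=\rho(y)\,dy$ over $\Omega$ with $\rho>0$ smooth and setting $P_\Omega=\phi_2 P\phi_1^{-1}$, the defining identity for $P^\sharp$ translates into
\[
\int\langle P_\Omega\tilde u,\rho\tilde v\rangle\,dy=\int\langle\tilde u,\rho\,\tilde\phi_1 P^\sharp\tilde\phi_2^{-1}\tilde v\rangle\,dy,
\]
whence
\[
\tilde\phi_1 P^\sharp\tilde\phi_2^{-1}=M_{\rho^{-1}}\circ P_\Omega^{\star}\circ M_\rho,
\]
where $P_\Omega^{\star}$ is the formal $L^2$ adjoint in the standard Lebesgue/Hermitian pairing and $M_{\rho^{\pm1}}$ are scalar multiplication operators. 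With the orthogonality hypothesis of Proposition~\ref{AdjointLocal} now in force, $P_\Omega^{\star}\in\Psi^\mu_{1,\delta}(\Omega;(\C^{M_2},-a^\star_{2,\phi_2}),(\C^{M_1},-a^\star_{1,\phi_1}))$ with the expected asymptotic expansion. Multiplication by a smooth positive scalar lies in $\Psi^0_{1,\delta}$ for any source--target action type (its symbol $\rho(y)I$ trivially meets Definition~\ref{symboldef}), so by Theorem~\ref{composition} and the identification $-a^\star_{j,\phi_j}=-a^\sharp_{j,\tilde\phi_j}$ the local representation of $P^\sharp$ belongs to $\Psi^\mu_{1,\delta}(\Omega;(\C^{M_2},-a^\sharp_{2,\tilde\phi_2}),(\C^{M_1},-a^\sharp_{1,\tilde\phi_1}))$. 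Patching across $\Y$ via Definition~\ref{globalops} delivers $P^\sharp\in\Psi^\mu_{1,\delta}(\Y;(F_2,-a_2^\sharp),(F_1,-a_1^\sharp))$.

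The principal symbol assertion follows from $\sym(P_\Omega^{\star})=\sym(P_\Omega)^{\star}$ in Proposition~\ref{AdjointLocal}: the scalar factors $\rho^{\pm1}$ cancel at leading order, and unwinding the pairing identification $[\cdot,\cdot]_j\leftrightarrow\langle\cdot,\cdot\rangle$ effected by our dual trivializations converts the Hermitian adjoint into the fiberwise formal adjoint of $\sym(P)$ with respect to the lifted pairings $[\cdot,\cdot]_j$ on $\pi^*E_j\times\pi^*F_j$, which is the claimed identity $\sym(P^\sharp)=\sym(P)^\sharp$. The main obstacle is the geometric bookkeeping of the second paragraph: producing $\delta$-admissible trivializations of $E_j$ whose cluster decompositions are simultaneously orthogonal, and verifying that the resulting dual trivializations are themselves $\delta$-admissible frames for $(F_j,-a_j^\sharp)$ in which $[\cdot,\cdot]_j$ becomes the standard Hermitian inner product. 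Once this is arranged the analytic content is entirely local and reduces to Proposition~\ref{AdjointLocal} and Theorem~\ref{composition}.
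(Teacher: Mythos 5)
Your proposal is correct and follows essentially the same route as the paper: reduce by partition of unity to pieces with kernel support in a single chart, construct dual trivializations of $F_j$ from $\delta$-admissible trivializations of $E_j$ with orthogonal cluster decomposition, identify the local representation of $P^\sharp$ as $M_{\rho^{-1}}\circ P_\Omega^\star\circ M_\rho$, and apply Proposition~\ref{AdjointLocal} together with the composition theorem. The only blemish is the aside that scalar multiplication lies in $\Psi^0_{1,\delta}$ ``for any source--target action type,'' which is false when the two actions differ, but in your argument $M_{\rho^{\pm1}}$ acts with identical source and target action, so nothing breaks.
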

\begin{proof}
Let $\Omega \subset \Y$ be the domain of a local chart, and let $\phi_j : E_{j,\Omega} \to \Omega\times\C^{M_j}$ be $\delta$-admissible trivializations relative to $a_j$ over $\Omega$. We equip $\C^{M_j}$ with the standard inner product $\langle \cdot,\cdot \rangle_{\C^{M_j}}$, and note that the trivializations are such that the decompositions \eqref{Edecomp} of $\C^{M_j}$ associated with $a_{j,\phi_j}$ are orthogonal with respect to $\langle \cdot,\cdot \rangle_{\C^{M_j}}$.
Now let $\phi_j^{\sharp} : \Omega\times\C^{M_j} \to F_{j,\Omega}$ be the adjoint with respect to the pairing $[\cdot,\cdot]_{j,y}$ on $E_{j,y}\times F_{j,y}$ and the standard inner product on $\C^{M_j}$, i.e.,
$$
\langle \phi_{j,y}(e), v \rangle_{\C^{M_j}} = [ e, \phi_{j,y}^{\sharp}v ]_{j,y}
$$
for $e \in E_{j,y}$ and $v \in \C^{M_j}$, where $\phi_{j,y} : E_{j,y} \to \C^{M_j}$ is the restriction of $\phi_j$ to the fiber over $y \in \Omega$, and likewise so for $\phi_j^{\sharp}$. Let $\psi_j = (\phi_j^{\sharp})^{-1} : F_{j,\Omega} \to \Omega\times\C^{M_j}$. Then $\psi_j$ is a $\delta$-admissible trivialization of $F_{j,\Omega}$ relative to $a_j^{\sharp}$ over $\Omega$, and we have $a_{j,\psi_j}^{\sharp} = (a_{j,\phi_j})^{\star} \in C^{\infty}(\Omega,\End(\C^{M_j}))$, where $\star$ represents the standard adjoint operation in $\End(\C^{M_j})$.

Now assume that $P \in \Psi^{\mu}_{1,\delta}(\Y;(E_1,a_1),(E_2,a_2))$ has Schwartz kernel that is compactly supported in $\Omega\times\Omega$. Hence $P$ is represented by an operator
$$
P_{\Omega} : C_c^{\infty}(\Omega;\C^{M_1}) \to C^{\infty}(\Omega;\C^{M_2})
$$
in the class $\Psi^{\mu}_{1,\delta}(\Omega;(\C^{M_1},a_{1,\phi_1}),(\C^{M_2},a_{2,\phi_2}))$. Proposition~\ref{AdjointLocal} is applicable here, and we get that the formal adjoint
$$
P_{\Omega}^{\star} : C_c^{\infty}(\Omega;\C^{M_2}) \to C^{\infty}(\Omega;\C^{M_1})
$$
with respect to the standard inner products on $\C^{M_j}$ is an operator in the class $\Psi^{\mu}_{1,\delta}(\Omega;(\C^{M_2},-a^{\sharp}_{2,\psi_2}),(\C^{M_1},-a^{\sharp}_{1,\psi_1}))$.
While Proposition~\ref{AdjointLocal} refers to Lebesgue measure in coordinates, a change of the density to $\m$ only results into a conjugation with a multiplication operator by a positive scalar function, and Proposition~\ref{localcomposition} then shows that this stays in the local operator class without changing the principal symbol.

The operator $P_{\Omega}^{\star}$ is the local representation of the desired operator
$$
P^{\sharp} \in \Psi^{\mu}_{1,\delta}(\Y;(F_2,-a_2^{\sharp}),(F_1,-a_1^{\sharp})).
$$
The operator $P^{\sharp}$ has compactly supported Schwartz kernel in $\Omega\times\Omega$, and we have $\sym(P^{\sharp}) = \sym(P)^{\sharp}$ in case $P$ (and then necessarily also $P^{\sharp}$) has twisted homogeneous principal symbol.

The general case reduces to considering operators with compactly supported Schwartz kernels and smoothing operators, using a partition of unity. Since any smoothing operator has a formal adjoint operator that is smoothing, the theorem is proved.
\end{proof}

\begin{definition}\label{DNEllipticityGlobal}
An operator $P \in \Psi^{\mu}_{1,\delta}(\Y;(E_1,a_1),(E_2,a_2))$ is called \emph{elliptic} if every point $y_0 \in \Y$ is contained in the domain of a local chart $\Omega$ such that there exist $\delta$-admissible trivializations $\phi_j : E_{j,\Omega} \to \Omega\times\C^{M_j}$ such that the local representation $P_{\Omega} \in \Psi^{\mu}_{1,\delta}(\Omega;(E_1,a_{1,\phi_1}),(E_2,a_{2,\phi_2}))$ of $P$ is elliptic.
\end{definition}

The notion of ellipticity in Definition~\ref{DNEllipticityGlobal} is independent of the choices of neighborhoods $\Omega$, charts, and $\delta$-admissible trivializations. This is a consequence of the local theory and its invariance properties from Section~\ref{sec-localoperators}.

If $P \in \Psi^{\mu}_{1,\delta}(\Y;(E_1,a_1),(E_2,a_2))$ has twisted homogeneous principal symbol, then $P$ is elliptic if and only if $\sym(P)$ is invertible everywhere on $T^*\Y\setminus 0$.

\begin{theorem}[Parametrix Theorem]\label{parametrixglobal}
For $P \in \Psi^{\mu}_{1,\delta}(\Y;(E_1,a_1),(E_2,a_2))$ the following are equivalent:
\begin{enumerate}[(a)]
\item $P$ is elliptic.
\item There exists $Q \in \Psi^{-\mu}_{1,\delta}(\Y;(E_2,a_2),(E_1,a_1))$ such that
$$
P\circ Q - 1 \in \Psi^{-\infty}(\Y;E_2,E_2) \textup{ and } Q\circ P - 1 \in \Psi^{-\infty}(\Y;E_1,E_1).
$$
\end{enumerate}
If $P$ has twisted homogeneous principal symbol so does the parametrix $Q$, and we have $\sym(Q) = \sym(P)^{-1}$ on $T^*\Y\setminus 0$.
\end{theorem}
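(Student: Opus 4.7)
The plan is to reduce (b) $\Rightarrow$ (a) to the local symbolic criterion for ellipticity, and to prove (a) $\Rightarrow$ (b) by patching local parametrices from Proposition~\ref{parametrixlocal} and correcting with a formal Neumann series; the preservation of a twisted homogeneous principal symbol comes out of the same construction.

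For (b) $\Rightarrow$ (a), fix $y_0 \in \Y$ and a coordinate chart $\Omega \ni y_0$ carrying $\delta$-admissible trivializations of both $(E_j, a_j)$. Localizing $P$ and $Q$ via cut-offs as in \eqref{PDarstellung}, the hypothesis produces local representations $p_\Omega, q_\Omega$ satisfying $p_\Omega \# q_\Omega \equiv q_\Omega \# p_\Omega \equiv 1$ modulo $S^{-\infty}$ by Proposition~\ref{localcomposition}. Remark~\ref{DNelliptdiscuss} then gives ellipticity of $p_\Omega$, and since $y_0$ was arbitrary, $P$ is elliptic in the sense of Definition~\ref{DNEllipticityGlobal}.

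For (a) $\Rightarrow$ (b), cover $\Y$ by finitely many charts $\Omega_1,\dots,\Omega_L$ over which $(E_1,a_1)$ and $(E_2,a_2)$ both admit $\delta$-admissible trivializations. Each local representation $P_k$ of $P$ is elliptic by Definition~\ref{DNEllipticityGlobal}, so Proposition~\ref{parametrixlocal} yields properly supported local parametrices $\tilde Q_k$; pulling these back through the trivializations gives operators $Q_k$ between sections of $E_2$ and $E_1$ over $\Omega_k$. Fix a partition of unity $\set{\varphi_k}$ subordinate to $\set{\Omega_k}$ and cut-offs $\psi_k \in C_c^\infty(\Omega_k)$ with $\psi_k \equiv 1$ on $\textup{supp}\,\varphi_k$, and define the initial approximation
\begin{equation*}
Q_0 = \sum_{k=1}^L \psi_k Q_k \varphi_k \in \Psi^{-\mu}_{1,\delta}(\Y;(E_2,a_2),(E_1,a_1)).
\end{equation*}
A direct computation using Theorem~\ref{composition}, the local identity $\sym(\tilde Q_k) = \sym(P_k)^{-1}$, and $\sum_k \varphi_k \equiv 1$ shows $\sym(P\circ Q_0) \equiv 1$, hence $R := 1 - P\circ Q_0 \in \Psi^{-(1-\delta)}_{1,\delta}(\Y;(E_2,a_2),(E_2,a_2))$; symmetrically $R' := 1 - Q_0 \circ P \in \Psi^{-(1-\delta)}_{1,\delta}(\Y;(E_1,a_1),(E_1,a_1))$.

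The Neumann series step then completes the proof, and is where the bookkeeping requires the most attention. By iterating Theorem~\ref{composition}, $R^n \in \Psi^{-n(1-\delta)}_{1,\delta}(\Y;(E_2,a_2),(E_2,a_2))$ with orders tending to $-\infty$, so part (c) of Proposition~\ref{basicprops} (applied chart by chart in $\delta$-admissible trivializations) supplies $S_+ \sim \sum_{n\ge 0} R^n$ in $\Psi^0_{1,\delta}(\Y;(E_2,a_2),(E_2,a_2))$; then $Q_+ := Q_0 \circ S_+$ satisfies $P\circ Q_+ - 1 \in \Psi^{-\infty}$. The symmetric construction from $R'$ yields a left parametrix $Q_-$, and the identity
\begin{equation*}
Q_- - Q_+ = (Q_- \circ P - 1)\circ Q_+ - Q_- \circ (P \circ Q_+ - 1) \in \Psi^{-\infty}(\Y;E_2,E_1)
\end{equation*}
shows that $Q := Q_+$ is a two-sided parametrix. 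Finally, if $P$ has twisted homogeneous principal symbol, then by Proposition~\ref{parametrixlocal} the same is true of each $\tilde Q_k$ with $\sym(\tilde Q_k) = \sym(P_k)^{-1}$; since $S_+ - 1$ has strictly negative order it does not alter principal symbols, and invariant patching of the local twisted principal symbols yields $\sym(Q) = \sym(P)^{-1}$ on $T^*\Y \setminus 0$. The main obstacle, such as it is, is notational rather than substantive: one must verify at every composition that the pairs $(E_j,a_j)$ attached to each slot are consistent, which is automatic from the way Theorem~\ref{composition} and Proposition~\ref{basicprops} are formulated inside the twisted classes.
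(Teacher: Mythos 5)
Your argument is correct and follows the same route the paper sketches in its short proof: patch the local parametrices from Proposition~\ref{parametrixlocal} into an initial approximation $Q_0$ with $1-PQ_0$ and $1-Q_0P$ of strictly negative order, then correct by a Neumann series as justified by part (c) of Proposition~\ref{basicprops} and the composition theorem, with the converse deduced from the composition formula and the symbolic characterization of ellipticity. The only places where care is needed --- that $\psi_kQ_k\varphi_k$ is a genuine element of $\Psi^{-\mu}_{1,\delta}(\Y;(E_2,a_2),(E_1,a_1))$, that the cut-offs commute with the symbol so that $\sym(P\circ Q_0)=\sum_k\varphi_k=1$, and that the global Neumann summand stays in the twisted class --- are all handled correctly in your write-up, so the proposal fills in exactly the standard bookkeeping the paper leaves implicit.
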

\begin{proof}
If $P$ is elliptic then $\Y$ has a finite covering by open subsets such that the local representations of the restrictions of $P$ are elliptic. Proposition~\ref{parametrixlocal} applies to these representations and gives local parametrices, and we then patch a global parametrix together out of the local ones in the usual way. The converse follows from the composition theorem and the multiplicative behavior of the principal symbol in coordinates (see Proposition~\ref{localcomposition}).
\end{proof}

\begin{theorem}\label{OrderReduction}
Let $S^*\Y$ be the cosphere bundle with respect to some choice of Riemannian metric on $\Y$. Let $E$ be a vector bundle, and $a_1,a_2 \in C^{\infty}(\Y;\End(E))$ be endomorphisms. Let $r$ be the identity in $C^{\infty}(S^*\Y;\End(\pi^*E))$, extended by twisted homogeneity of degree $\mu \in \R$ with respect to the actions generated by $a_1$ and $a_2$ to all of $T^*\Y \setminus 0$. Then there exists $R \in \Psi^{\mu}_{1,\delta}(\Y;(E,a_1),(E,a_2))$ with $\sym(R) = r$ such that $R : C^{\infty}(\Y;E) \to C^{\infty}(\Y;E)$ is invertible with inverse $R^{-1} \in \Psi^{-\mu}_{1,\delta}(\Y;(E,a_2),(E,a_1))$.
\end{theorem}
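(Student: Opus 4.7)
I would build $R$ in three steps: first construct some operator $R_0\in\Psi^\mu_{1,\delta}(\Y;(E,a_1),(E,a_2))$ with $\sym(R_0)=r$, then argue that it has Fredholm index zero, and finally correct it by a finite-rank smoothing operator to obtain an exact bijection whose inverse lies in the desired class.

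For the construction, cover $\Y$ by finitely many chart domains $\Omega_k$ over each of which both $(E,a_1)$ and $(E,a_2)$ admit $\delta$-admissible trivializations $\phi_k$ and $\psi_k$. On $\Omega_k$ the symbol $r$ becomes a $\Hom(\C^M,\C^M)$-valued function that is twisted homogeneous of degree $\mu$ and hence, by Proposition~\ref{homogeneousissymbol}, lies in $S^\mu_{1,\delta}(\Omega_k\times\R^q;(\C^M,a_{1,\phi_k}),(\C^M,a_{2,\psi_k}))$. Multiplying by an excision function, quantizing, and patching with a subordinate partition of unity yields $R_0$ with $\sym(R_0)=r$. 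Since $r\equiv I$ on $S^*\Y$ is invertible, $R_0$ is elliptic, and Theorem~\ref{parametrixglobal} supplies a parametrix $Q_0\in\Psi^{-\mu}_{1,\delta}(\Y;(E,a_2),(E,a_1))$ with $R_0Q_0-I,\ Q_0R_0-I\in\Psi^{-\infty}$; together with Theorem~\ref{AdjointsGlobal}, this forces both $\ker R_0$ and the cokernel of $R_0$ to be finite-dimensional subspaces of $C^\infty(\Y;E)$.

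To show $\ind(R_0)=0$, I deform the actions. Consider the family of symbols $r_t$, $t\in[0,1]$, each equal to the identity on $S^*\Y$ extended by twisted homogeneity of degree $\mu$ with respect to the scaled actions $ta_1,ta_2$, and the corresponding operators $R_t$ built as above (the cover can be refined so that $\delta$-admissibility holds uniformly in $t$, since the eigenvalue clusters of $ta_j$ deform continuously). At $t=0$ the actions vanish and $r_0(y,\eta)=|\eta|^\mu I$, so $R_0^{(0)}$ is a classical elliptic operator in $\Psi^\mu(\Y;E,E)$ with scalar identity principal symbol; any such operator --- for instance $(1+\Delta_E)^{\mu/2}$ built from a positive Laplacian on $E$ via the spectral calculus --- is invertible between the standard Sobolev spaces, so $\ind(R_0^{(0)})=0$. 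After pre- and post-composing with the scalar reducer $(1-\Delta)^{\mu/2}$, the family $R_t$ becomes a norm-continuous family of Fredholm operators on $L^2(\Y;E)$, and continuity of the Fredholm index gives $\ind(R_0)=\ind(R_0^{(0)})=0$.

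Finally, fix a Hermitian inner product on $E$ and a smooth density on $\Y$, choose bases $\{u_j\}_{j=1}^N$ of $\ker R_0$ and $\{v_j\}_{j=1}^N$ of a $C^\infty$-complement of the range of $R_0$ in $C^\infty(\Y;E)$, and set $Su=\sum_{j=1}^N\langle u,u_j\rangle v_j$. Then $R:=R_0+S$ is a rank-$N$ smoothing perturbation of $R_0$ that still satisfies $\sym(R)=r$ and is a bijection of $C^\infty(\Y;E)$. A parametrix $Q$ of $R$ from Theorem~\ref{parametrixglobal} gives $R^{-1}=Q+R^{-1}(I-RQ)$, and the remainder has smooth Schwartz kernel since $I-RQ\in\Psi^{-\infty}$ and $R^{-1}$ preserves $C^\infty$; hence $R^{-1}\in\Psi^{-\mu}_{1,\delta}(\Y;(E,a_2),(E,a_1))$. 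The main obstacle is the middle step: the twisted Sobolev completions change along the family $R_t$, so the continuity of the Fredholm index needs a careful reduction to a common Hilbert space, as sketched above; an alternative that bypasses this entirely is to invoke the index theorem of Section~\ref{sec-Fredholm theory} and note that $r|_{S^*\Y}=I$ represents the trivial symbol K-class.
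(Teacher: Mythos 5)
Your plan (elliptic quantization, index-zero argument via homotopy of the actions, finite-rank correction) is a genuinely different route from the paper's, but the index-zero step contains a real gap that you yourself flag and do not close. The family $R_t$ built over the scaled actions $ta_j$ lies in $\Psi^{\mu}_{1,\delta}(\Y;(E,ta_1),(E,ta_2))$, and conjugating by the scalar reducer $(1-\Delta)^{\mp\mu/2}$ only cancels the numerical order $\mu$, not the twist. After untwisting via Proposition~\ref{basicprops}(b) you land in $\Psi^{\mu'}_{1,\delta}(\Y;E,E)$ for some $\mu'>0$, but for $t>0$ the untwisted principal symbol of $R_t$ is \emph{not} elliptic in the ordinary sense (the symbol $\langle\eta\rangle^{-\mu-\mu'}\,|\eta|^{-ta_2(y)}|\eta|^{\mu}|\eta|^{ta_1(y)}$ has no uniform lower bound when $a_1\ne a_2$), so the $L^2$-realizations are not a norm-continuous family of \emph{Fredholm} operators and continuity of the index does not apply. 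Your suggested alternative --- invoking the Index Theorem of Section~\ref{sec-Fredholm theory} --- is circular: that theorem, the Fredholm Theorem~\ref{FredholmTheorem}, and even the Hilbert-space structure on $H^{s+a}(\Y;E)$ in Theorem~\ref{GlobalMappingProps}(b) are all proved \emph{using} Theorem~\ref{OrderReduction}. Without an independent proof that $\ind R_0=0$, the finite-rank correction cannot be carried out.

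The paper sidesteps index considerations entirely via a parameter-dependent calculus. One extends $r$ to the symbol $r(\lambda)$ that equals the identity on $S^*(\Y\times\R)$ and is twisted homogeneous of degree $\mu$ jointly in $(\eta,\lambda)$; quantizing gives a family $R(\lambda)$ elliptic with parameter. The parameter-dependent parametrix $Q(\lambda)$ has remainders $R(\lambda)Q(\lambda)-1$, $Q(\lambda)R(\lambda)-1 \in \S(\R,\Psi^{-\infty})$, so for $|\lambda_0|$ large the remainders have small operator norm and a Neumann series produces a genuine two-sided inverse $R(\lambda_0)^{-1}=Q(\lambda_0)+G$ with $G$ smoothing. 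Since $\sym(R(\lambda_0))=r(0)=r$ as an operator without parameter, $R:=R(\lambda_0)$ has the asserted properties. This is the standard device for obtaining exact order reductions in a calculus that does not yet possess a Sobolev or index theory, and it is exactly what is missing in your argument.
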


Before giving the proof, we note the following. As in the standard calculus of pseudodifferential operators, our calculus allows adding a dependence on a parameter $\lambda \in \Lambda$ to the construction. For our purposes it suffices to consider $\Lambda = \R$.
In the local calculus in open sets $\Omega \subset \R^q$, the symbols of order $\mu$ in Definition~\ref{symboldef} are replaced in the parameter-dependent calculus by functions $p(y,\eta,\lambda)$ that satisfy the estimates
$$
\|\langle \eta,\lambda \rangle^{a_2(y)}\bigl(D_y^{\alpha}\partial_{(\eta,\lambda)}^{\beta}p(y,\eta,\lambda)\bigr)\langle \eta,\lambda \rangle^{-a_1(y)}\| \leq C_{K,\alpha,\beta} \langle \eta,\lambda \rangle^{\mu - |\beta| + \delta|\alpha|}
$$
for all $(y,\eta,\lambda) \in K\times\R^{q+1}$, where $K \Subset \Omega$ is any compact subset. Pseudodifferential operators with parameters in the local calculus are families $P(\lambda) = \Op(p)(\lambda) + G(\lambda) : C_c^{\infty}(\Omega;\C^{M_1}) \to C^{\infty}(\Omega;\C^{M_2})$, where $\Op(p)(\lambda)$ is the quantization of a symbol $p(y,\eta,\lambda)$ of the kind just described, and $G(\lambda)$ belongs to $\S(\Lambda,\Psi^{-\infty}(\Omega;\C^{M_1},\C^{M_2}))$, the space of Schwartz functions on $\Lambda$ with values in $\Psi^{-\infty}(\Omega;\C^{M_1},\C^{M_2})$. All constructions and results about the local calculus in Section~\ref{sec-localoperators} hold for the operator class with the added parameter, in particular Proposition~\ref{localcomposition} on composition of operator families, and Proposition~\ref{parametrixlocal} on the existence of parameter-dependent parametrices for operators that are elliptic with parameter. Ellipticity with parameter on the symbolic level means that for every compact set $K \Subset \Omega$ there exists $R > 0$ such that $p(y,\eta,\lambda)$ is invertible for all $y \in K$ and all $|(\eta,\lambda)| \geq R$, and the inverse satisfies the estimate
$$
\|\langle \eta,\lambda \rangle^{a_1(y)}p(y,\eta,\lambda)^{-1}\langle \eta,\lambda \rangle^{-a_2(y)}\| \leq C \langle \eta,\lambda \rangle^{-\mu}
$$
for all $y \in K$ and all $|\eta,\lambda| \geq R$ for some suitable constant $C > 0$. The notion of twisted homogeneity of degree $\mu \in \R$ makes sense as well and includes scaling in the parameter along with the covariables:
$$
p_{(\mu)}(y,\varrho\eta,\varrho\lambda) = \varrho^{\mu}\varrho^{-a_2(y)}p_{(\mu)}(y,\eta,\lambda)\varrho^{a_1(y)} \textup{ for } \varrho > 0 \textup{ and } (\eta,\lambda) \neq (0,0),
$$
see Definition~\ref{twisteddef}. For operator families $P(\lambda)$ with parameter-dependent twisted homogeneous principal symbol ellipticity with parameter is equivalent to the invertibility of that symbol.
The parameter-dependent calculus is also defined globally by following the same approach as in the case without parameters that is presented in this section.

\begin{proof}[Proof of Theorem~\ref{OrderReduction}]
Let $r(\lambda)$ be the identity in $\End(\pi^*E)$ on $S^*(\Y\times\Lambda)$, where $\Y\times\Lambda$ carries the product metric of the given metric on $\Y$ and the standard metric on $\Lambda = \R$, extended by twisted homogeneity of degree $\mu \in \R$ to all of $(T^*\Y\times\Lambda)\setminus 0$. Observe that the restriction $r(0)$ of $r(\lambda)$ to $\lambda = 0$ is precisely the function $r$ in the statement of the theorem. With $r(\lambda)$ we associate a family of operators $R(\lambda)$ of order $\mu \in \R$ in the parameter-dependent calculus such that $r(\lambda)$ is the parameter-dependent twisted homogeneous principal symbol of $R(\lambda)$. Then $R(\lambda)$ is elliptic with parameter $\lambda \in \R$, and consequently there exists a parameter-dependent parametrix $Q(\lambda)$ in the calculus of order $-\mu$ such that
$$
R(\lambda)\circ Q(\lambda) - 1,\; Q(\lambda)\circ R(\lambda) - 1 \in \Psi^{-\infty}(\Y,\Lambda;E).
$$
In particular, if with pick $\lambda = \lambda_0$ with $|\lambda_0|$ sufficiently large, then $R(\lambda_0)$ is invertible with inverse $R(\lambda_0)^{-1} = Q(\lambda_0) + G$ for some appropriate $G \in \Psi^{-\infty}(\Y;E)$. The pseudodifferential operator $R(\lambda_0)$ is an element of order $\mu \in \R$ in the calculus without parameters, and its inverse $R(\lambda_0)^{-1}$ is an element of order $-\mu$. By construction of the operator $R(\lambda_0)$ we see that it does have a twisted homogeneous principal symbol on $T^*\Y\setminus 0$ that is simply given by $r$. Hence the assertion of the theorem holds with $R = R(\lambda_0)$.
\end{proof}


\section{Sobolev spaces and Fredholm theory}\label{sec-Fredholm theory}

We continue our investigation with the definition of the global Sobolev spaces on $\Y$, the mapping properties of the operators in the calculus in the Sobolev space scale, and the Fredholm theory of elliptic operators.

\begin{definition}
Let $(E,a)$ be a vector bundle equipped with an endomorphism $a$. For $s \in \R$ let
$$
H^{s+a}(\Y;E)
$$
be the space of all $u \in \mathcal D'(\Y;E)$ such that over domains $\Omega \subset \Y$ of local charts over which there exists a $\delta$-admissible trivialization $\phi : E_{\Omega} \to \Omega\times\C^M$ relative to $a$, the restriction $u|_{\Omega}$ is a distribution in $H_{\textup{loc}}^{s+a_{\phi}}(\Omega;\C^M)$.
\end{definition}

By the comment following Definition~\ref{SobolevSpaceLocal} and by Proposition~\ref{LocalMappingProps}, Corollary~\ref{SobSpProps}, and the invariance properties of the local calculus we see that the space $H^{s+a}(\Y;E)$ is well defined and is independent of the choice of $0 < \delta < 1$.

\begin{theorem}\label{GlobalMappingProps}
\begin{enumerate}[(a)]
\item Let $P \in \Psi^{\mu}_{1,\delta}(\Y;(E_1,a_1),(E_2,a_2))$. Then
\begin{equation}\label{PGlobalMap}
P : H^{s+a_1}(\Y;E_1) \to H^{s-\mu+a_2}(\Y;E_2)
\end{equation}
for every $s \in \R$.
\item Fix a smooth positive density on $\Y$ and a Hermitian metric on $E$. Let $\Lambda_s \in L_{1,\delta}^s(\Y;(E,a),E)$ be invertible with inverse $\Lambda_s^{-1} \in \Psi^{-s}_{1,\delta}(\Y;E,(E,a))$, see Theorem~\ref{OrderReduction}. Then $H^{s+a}(\Y;E)$ is a Hilbert space with respect to the inner product
$$
\langle u,v \rangle = \langle \Lambda_s u,\Lambda_s v \rangle_{L^2(\Y;E)}.
$$
The topology induced on $H^{s+a}(\Y;E)$ by the norm associated to this inner product is independent of the choice of density on $\Y$ and Hermitian form on $E$, and independent of the choice of $\Lambda_s$. The map \eqref{PGlobalMap} is continuous with respect to this topology. 
\item $C^{\infty}(\Y;E) \hookrightarrow H^{s+a}(\Y;E) \hookrightarrow \mathcal D'(\Y;E)$ continuously, and $C^{\infty}(\Y;E)$ is dense in $H^{s+a}(\Y;E)$ for every $s \in \R$.
\item $H^{s+a}(\Y;E) \hookrightarrow H^{t+a}(\Y;E)$ continuously for $s \geq t$, and this embedding is compact for $s > t$.
\end{enumerate}
\end{theorem}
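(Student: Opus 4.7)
The plan is to reduce every assertion to the local theory of Section~\ref{sec-localspaces} and to exploit the globally invertible order reduction $\Lambda_s$ from Theorem~\ref{OrderReduction}. For (a), I decompose $P$ by the partition of unity representation \eqref{PDarstellung} as $P = \sum_k \varphi_k P \psi_k + R$ with $R \in \Psi^{-\infty}$. Each $\varphi_k P \psi_k$ has Schwartz kernel compactly supported in $\Omega_k \times \Omega_k$ and is locally, via $\delta$-admissible trivializations, represented by a properly supported operator in $\Psi^{\mu}_{1,\delta}(\Omega_k;(\C^{M_1},a_{1,\phi_1}),(\C^{M_2},a_{2,\phi_2}))$. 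If $u \in H^{s+a_1}(\Y;E_1)$, then $\psi_k u$ has local representation in $H^{s+a_1}_{\textup{comp}}(\Omega_k;\C^{M_1})$, so Proposition~\ref{LocalMappingProps} gives a local representation of $\varphi_k P \psi_k u$ in $H^{s-\mu+a_2}_{\textup{loc}}(\Omega_k;\C^{M_2})$. Summing yields $Pu \in H^{s-\mu+a_2}(\Y;E_2)$, the contribution of the smoothing remainder $R$ being absorbed using the trivial embedding $C^{\infty}(\Y;E_2) \subset H^{s-\mu+a_2}(\Y;E_2)$ proved in (c).

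For (b), I apply Theorem~\ref{OrderReduction} to the pair $(a,0)$ to produce $\Lambda_s \in \Psi^{s}_{1,\delta}(\Y;(E,a),E)$ with inverse in $\Psi^{-s}_{1,\delta}(\Y;E,(E,a))$. Part (a), applied in both directions, shows that $\Lambda_s : H^{s+a}(\Y;E) \to L^{2}(\Y;E)$ is a linear bijection, so pulling back the $L^2$ inner product through $\Lambda_s$ endows $H^{s+a}(\Y;E)$ with a Hilbert space structure. The principal obstacle of the proof is to show that the resulting topology does not depend on the choices of $\Lambda_s$, density on $\Y$, or Hermitian metric on $E$. If $\Lambda_s'$ is a second such order reduction, then by the composition theorem \ref{composition} the operator $\Lambda_s \Lambda_s'^{-1}$ lies in $\Psi^{0}_{1,\delta}(\Y;E,E)$, where both endomorphisms are trivial, so it is a classical type-$(1,\delta)$ pseudodifferential operator; Calder\'on--Vaillancourt $L^2$-boundedness then shows that the two norms are equivalent. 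Changes of density or Hermitian form conjugate $\Lambda_s$ by multiplication operators by smooth positive functions, which produces the same kind of harmless zero-order perturbation. The continuity of \eqref{PGlobalMap} is likewise equivalent to $L^2$-boundedness of $\Lambda_{s-\mu} P \Lambda_s^{-1} \in \Psi^{0}_{1,\delta}(\Y;E,E)$.

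For (c), the key input is Proposition~\ref{basicprops}(b), globalized through Definition~\ref{globalops}: it provides $m,m' \geq 0$ such that the identity operator lies simultaneously in $\Psi^{m'}_{1,\delta}(\Y;E,(E,a))$ and in $\Psi^{m}_{1,\delta}(\Y;(E,a),E)$. Applying part (a) and (b) to these yields the sandwich of continuous embeddings
\begin{equation*}
H^{s+m'}(\Y;E) \hookrightarrow H^{s+a}(\Y;E) \hookrightarrow H^{s-m}(\Y;E),
\end{equation*}
which in combination with the classical continuous embeddings $C^{\infty}(\Y;E) \hookrightarrow H^{s+m'}(\Y;E)$ and $H^{s-m}(\Y;E) \hookrightarrow \mathcal D'(\Y;E)$ delivers the stated embeddings of $H^{s+a}(\Y;E)$. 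Density of $C^{\infty}(\Y;E)$ then follows by transport through $\Lambda_s^{-1}$: given $u \in H^{s+a}(\Y;E)$, approximate $\Lambda_s u \in L^{2}(\Y;E)$ by a sequence $\phi_n \in C^{\infty}(\Y;E)$, observe that $\Lambda_s^{-1}\phi_n \in C^{\infty}(\Y;E)$ since pseudodifferential operators preserve smoothness, and use continuity of $\Lambda_s^{-1} : L^{2}(\Y;E) \to H^{s+a}(\Y;E)$ to conclude $\Lambda_s^{-1}\phi_n \to u$ in $H^{s+a}(\Y;E)$.

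For (d), the inclusion $H^{s+a}(\Y;E) \hookrightarrow H^{t+a}(\Y;E)$ corresponds under the isomorphisms $\Lambda_s$ and $\Lambda_t$ to the operator $\Lambda_t \Lambda_s^{-1} \in \Psi^{t-s}_{1,\delta}(\Y;E,E)$, which is a standard type-$(1,\delta)$ operator since both endomorphisms are trivial. Continuity for $s \geq t$ reduces to $L^2$-boundedness of this operator (which has non-positive order). For $s > t$ the operator is of strictly negative order $t-s$, so it maps $L^{2}(\Y;E)$ continuously into the classical Sobolev space $H^{s-t}(\Y;E)$, and Rellich's theorem on the closed manifold $\Y$ provides the compact embedding $H^{s-t}(\Y;E) \hookrightarrow L^{2}(\Y;E)$; the composition is compact, and transferring back through $\Lambda_s$ and $\Lambda_t$ yields compactness of $H^{s+a}(\Y;E) \hookrightarrow H^{t+a}(\Y;E)$.
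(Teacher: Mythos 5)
Your proposal is correct and follows essentially the same route as the paper's (terse) proof: decomposition via the partition-of-unity representation \eqref{PDarstellung} plus Proposition~\ref{LocalMappingProps} for (a), reduction to zero-order classical $\Psi_{1,\delta}$ operators via the order-reducing isomorphisms $\Lambda_s$ from Theorem~\ref{OrderReduction} together with $L^2$-boundedness and the composition theorem for (b), the sandwich of embeddings plus density transport for (c), and Rellich compactness after conjugation by $\Lambda_s$, $\Lambda_t$ for (d). Two small points of exposition: the set-theoretic containment $C^\infty(\Y;E_2)\subset H^{s-\mu+a_2}(\Y;E_2)$ you invoke for the smoothing remainder in (a) is immediate from the definition of the global spaces and does not need to be deferred to (c) (avoiding the appearance of a forward reference), and the $L^2$-boundedness of order-zero type-$(1,\delta)$ operators with $0<\delta<1$ is the classical H\"ormander result rather than Calder\'on--Vaillancourt, which concerns type $(0,0)$.
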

\begin{proof}
Write $P = \sum_{k=1}^L\varphi_kP\psi_k + R$ as in \eqref{PDarstellung}. The operators $\varphi_kP\psi_k$ are pull-backs of operators in the local calculus with compactly supported Schwartz kernels, and consequently Proposition~\ref{LocalMappingProps} implies that $\varphi_k P \psi_k : H^{s+a_1}(\Y;E_1) \to H^{s-\mu+a_2}(\Y;E_2)$ for each $k = 1,\ldots,L$. On the other hand, $R$ is smoothing and thus trivially has the desired mapping properties. This proves (a).

(b) follows from the continuity of all pseudodifferential operators acting in distributions, the Composition Theorem~\ref{composition} for the calculus, and the boundedness of pseudodifferential operators of order zero and type $(1,\delta)$ in $L^2$. (c) is evident, and by utilizing Theorem~\ref{OrderReduction} part (d) reduces to the familiar result that pseudodifferential operators of order $< 0$ and type $(1,\delta)$ are compact in $L^2$.
\end{proof}

\begin{theorem}
Fix a smooth positive density $\m$ on $\Y$, and let
$$
[\cdot,\cdot]_y : E_y \times F_y \to \C, \quad y \in \Y,
$$
be a nondegenerate sesquilinear form depending smoothly on $y \in \Y$. The map
$$
\{ u,v \} = \int [ u(y),v(y) ]_{y}\,d\m(y)
$$
for $u \in C^{\infty}(\Y;E)$ and $v \in C^{\infty}(\Y;F)$ extends by continuity to a nondegenerate sesquilinear form
$$
\{ \cdot,\cdot \} : H^{s+a}(\Y;E) \to H^{-s-a^{\sharp}}(\Y;F) \to \C
$$
that induces an antilinear isomorphism $H^{s+a}(\Y;E)' \cong H^{-s-a^{\sharp}}(\Y;F)$. Here $a^{\sharp} \in C^{\infty}(\Y;\End(F))$ is the adjoint endomorphism of $a \in C^{\infty}(\Y;\End(E))$ with respect to $[\cdot,\cdot]_y$.
\end{theorem}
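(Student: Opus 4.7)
The plan is to reduce the duality claim to the $L^{2}$ case by conjugating with invertible order-reducing operators, and to shift that reduction from one slot of the pairing to the other via formal adjunction. First I would invoke Theorem~\ref{OrderReduction} to produce an invertible $\Lambda \in \Psi^{s}_{1,\delta}(\Y;(E,a),(E,0))$ with inverse $\Lambda^{-1} \in \Psi^{-s}_{1,\delta}(\Y;(E,0),(E,a))$; by Theorem~\ref{GlobalMappingProps}(a), $\Lambda$ is a topological isomorphism $H^{s+a}(\Y;E) \to L^{2}(\Y;E)$. Theorem~\ref{AdjointsGlobal} then yields $\Lambda^{\sharp} \in \Psi^{s}_{1,\delta}(\Y;(F,0),(F,-a^{\sharp}))$ and $(\Lambda^{-1})^{\sharp} \in \Psi^{-s}_{1,\delta}(\Y;(F,-a^{\sharp}),(F,0))$; applying $\sharp$ to the identities $\Lambda\Lambda^{-1} = I = \Lambda^{-1}\Lambda$ (together with $(PQ)^{\sharp} = Q^{\sharp}P^{\sharp}$) shows that $\Lambda^{\sharp}$ and $(\Lambda^{-1})^{\sharp}$ are mutually inverse. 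In particular, $(\Lambda^{-1})^{\sharp}$ is a topological isomorphism $H^{-s-a^{\sharp}}(\Y;F) \to L^{2}(\Y;F)$.

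Next, for smooth sections the defining relation for the formal adjoint gives
\begin{equation*}
\{u,v\} = \{\Lambda^{-1}(\Lambda u),v\} = \{\Lambda u,(\Lambda^{-1})^{\sharp}v\},
\end{equation*}
where the right-hand side is the $L^{2}$-pairing. Combined with the continuity of the two reducers into $L^{2}$ and Cauchy--Schwarz, this bounds $|\{u,v\}|$ by a constant times the product of the $H^{s+a}$- and $H^{-s-a^{\sharp}}$-norms on $C^{\infty}\times C^{\infty}$; density of smooth sections (Theorem~\ref{GlobalMappingProps}(c)) furnishes the unique continuous extension, and the same density shows that the extension is independent of the particular choice of $\Lambda$. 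The displayed identity also exhibits the induced map $\Phi : H^{-s-a^{\sharp}}(\Y;F) \to H^{s+a}(\Y;E)'$, $v \mapsto \{\cdot,v\}$, as the composition $\Phi = \Lambda^{*} \circ \Phi_{L^{2}} \circ (\Lambda^{-1})^{\sharp}$, where $\Phi_{L^{2}}$ is the antilinear isomorphism $L^{2}(\Y;F) \to L^{2}(\Y;E)'$ induced by the $L^{2}$-pairing and $\Lambda^{*}$ is the Banach-space adjoint of $\Lambda$. Since all three factors are topological isomorphisms (two linear, one antilinear), $\Phi$ is an antilinear topological isomorphism, and nondegeneracy on the other slot follows from surjectivity of $\Phi$ together with Hahn--Banach.

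The main point to get right is the bookkeeping of the endomorphism data under formal adjunction, ensuring that the Sobolev scale on the $F$-side ends up governed precisely by $-a^{\sharp}$; this is the direct content of Theorem~\ref{AdjointsGlobal} applied to $\Lambda$ and $\Lambda^{-1}$. The only other non-formal ingredient is that $\Phi_{L^{2}}$ really is an antilinear isomorphism, which follows from the fiberwise nondegeneracy of $[\cdot,\cdot]$: it supplies a smooth antilinear bundle isomorphism $F \to E^{*}$ on the compact base $\Y$, trivializing the $L^{2}$-duality to the standard one.
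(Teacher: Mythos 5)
Your proposal is correct and follows essentially the same route as the paper: both use Theorem~\ref{OrderReduction} to produce the invertible order reduction $\Lambda_s \in \Psi^{s}_{1,\delta}(\Y;(E,a),E)$, apply Theorem~\ref{AdjointsGlobal} to $\Lambda_s^{-1}$ to land in $\Psi^{-s}_{1,\delta}(\Y;(F,-a^{\sharp}),F)$, and rewrite $\{u,v\}$ as the $L^{2}$-pairing of $\Lambda_s u$ against $(\Lambda_s^{-1})^{\sharp}v$. You flesh out the mutual invertibility of $\Lambda_s^{\sharp}$ and $(\Lambda_s^{-1})^{\sharp}$ and the factorization $\Phi = \Lambda^{*}\circ\Phi_{L^{2}}\circ(\Lambda^{-1})^{\sharp}$ that the paper leaves to the reader under the phrase \emph{has the desired properties}, but the substance is identical.
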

\begin{proof}
By Theorem~\ref{OrderReduction} there exists an invertible $\Lambda_s \in \Psi^s_{1,\delta}(\Y;(E,a),E)$ with inverse $\Lambda_s^{-1} \in \Psi^{-s}_{1,\delta}(\Y;E,(E,a))$. By Theorem~\ref{AdjointsGlobal} we have $$
\bigl(\Lambda_s^{-1}\bigr)^{\sharp} \in \Psi^{-s}_{1,\delta}(\Y;(F,-a^{\sharp}),F).
$$
For $u \in C^{\infty}(\Y;E)$ and $v \in C^{\infty}(\Y;F)$ we have
$$
\{ u,v \} = \int [ \Lambda_s^{-1}\Lambda_su(y),v(y) ]_{y}\,d\m(y) = \int [ \Lambda_su(y),\bigl(\Lambda_s^{-1}\bigr)^{\sharp}v(y) ]_{y}\,d\m(y).
$$
Theorem~\ref{GlobalMappingProps} shows that the right-hand side extends by continuity to all $u \in H^{s+a}(\Y;E)$ and $v \in H^{-s-a^{\sharp}}(\Y;F)$, and the extension $\{\cdot,\cdot\}$ has the desired properties.
\end{proof}

\begin{theorem}[Elliptic Regularity]
Let $P \in \Psi^{\mu}_{1,\delta}(\Y;(E_1,a_1),(E_2,a_2))$ be elliptic. Let $u \in \mathcal D'(\Y;E_1)$ be such that $Pu = f \in H^{s+a_2}(\Y;E_2)$ for some $s \in \R$. Then $u \in H^{s+\mu+a_1}(\Y;E_1)$.
\end{theorem}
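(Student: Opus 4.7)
The plan is to mimic the standard elliptic regularity argument, now using the global parametrix and mapping properties established earlier.

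First I would invoke Theorem~\ref{parametrixglobal} to produce a parametrix $Q \in \Psi^{-\mu}_{1,\delta}(\Y;(E_2,a_2),(E_1,a_1))$ with
\begin{equation*}
Q\circ P = \mathrm{Id} + R, \qquad R \in \Psi^{-\infty}(\Y;E_1,E_1).
\end{equation*}
Applying $Q$ to the equation $Pu=f$ gives the identity $u = Qf - Ru$, valid in $\mathcal D'(\Y;E_1)$ since all operators in the calculus act continuously on distributions (as noted in the proof of Theorem~\ref{GlobalMappingProps}(b)).

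Next I would analyze each term on the right individually. By the mapping property Theorem~\ref{GlobalMappingProps}(a) applied to $Q$,
\begin{equation*}
Q : H^{s+a_2}(\Y;E_2) \to H^{s+\mu+a_1}(\Y;E_1),
\end{equation*}
so $Qf \in H^{s+\mu+a_1}(\Y;E_1)$. The residual $R$ is smoothing and therefore maps $\mathcal D'(\Y;E_1)$ into $C^{\infty}(\Y;E_1)$; by Theorem~\ref{GlobalMappingProps}(c) we have $C^{\infty}(\Y;E_1) \hookrightarrow H^{t+a_1}(\Y;E_1)$ for every $t \in \R$, in particular for $t = s+\mu$. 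Thus $Ru \in H^{s+\mu+a_1}(\Y;E_1)$ as well, and summing yields $u \in H^{s+\mu+a_1}(\Y;E_1)$, as claimed.

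There is essentially no obstacle: all of the work has been pushed into the earlier global statements (existence of a parametrix within the twisted class, the correct Sobolev shift by $\mu$ that accounts for the generators $a_j$, and the fact that smoothing operators land in $C^\infty$ of the target bundle). The only mild point worth checking is that $Q\circ P$ genuinely equals $\mathrm{Id}+R$ as operators on distributions --- but this is immediate from Theorem~\ref{composition} together with the density/continuity statements of Theorem~\ref{GlobalMappingProps}(b),(c), so no extra argument beyond citing the previously established global calculus is required.
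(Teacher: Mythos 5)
Your proof is correct and follows exactly the argument the paper itself uses: invoke the global Parametrix Theorem to write $u = Qf - Ru$, apply the mapping property of Theorem~\ref{GlobalMappingProps} to $Q$, and absorb the smoothing term $Ru$ into $C^\infty(\Y;E_1) \subset H^{s+\mu+a_1}(\Y;E_1)$. The paper states the proof by reference to Corollary~\ref{LocalReg}, which is precisely this computation in the local setting.
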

\begin{proof}
This follows from the Parametrix Theorem~\ref{parametrixglobal} and Theorem~\ref{GlobalMappingProps} in the usual way. The argument is the same as in Corollary~\ref{LocalReg} for the local calculus.
\end{proof}

\begin{theorem}[Fredholm Theorem]\label{FredholmTheorem}
Let $P \in \Psi^{\mu}_{1,\delta}(\Y;(E_1,a_1),(E_2,a_2))$. The following are equivalent:
\begin{enumerate}[(a)]
\item $P$ is elliptic.
\item $P : H^{s+a_1}(\Y;E_1) \to H^{s-\mu+a_2}(\Y;E_2)$ is a Fredholm operator for every $s \in \R$.
\item $P : H^{s_0+a_1}(\Y;E_1) \to H^{s_0-\mu+a_2}(\Y;E_2)$ is a Fredholm operator for some $s_0 \in \R$.
\end{enumerate}
\end{theorem}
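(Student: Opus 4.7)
The plan is to prove the cycle $(a)\Rightarrow(b)\Rightarrow(c)\Rightarrow(a)$, where $(b)\Rightarrow(c)$ is trivial and the two substantive implications are handled by different tools. For $(a)\Rightarrow(b)$ I would apply the Parametrix Theorem~\ref{parametrixglobal} to obtain $Q\in\Psi^{-\mu}_{1,\delta}(\Y;(E_2,a_2),(E_1,a_1))$ with $P\circ Q-1$ and $Q\circ P-1$ smoothing. By Theorem~\ref{GlobalMappingProps}(a), $Q$ is bounded between the appropriate Sobolev spaces at every level. Any smoothing operator maps $\mathcal D'$ into $C^{\infty}$ and hence factors through the compact Sobolev embedding of Theorem~\ref{GlobalMappingProps}(d), so it is compact on every space in the scale. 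Consequently $Q$ is a two-sided inverse of $P$ modulo compact operators at every level $s\in\R$, and $P$ is Fredholm for all $s$.

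For $(c)\Rightarrow(a)$ the idea is to reduce to the ordinary H\"ormander $(1,\delta)$-calculus via order reduction. Using Theorem~\ref{OrderReduction}, choose invertible operators $R_1\in\Psi^{s_0}_{1,\delta}(\Y;(E_1,a_1),E_1)$ and $R_2\in\Psi^{s_0-\mu}_{1,\delta}(\Y;(E_2,a_2),E_2)$ whose principal symbols are the twisted-homogeneous extensions of the respective identities. By Theorem~\ref{GlobalMappingProps}(b), $R_1$ and $R_2$ are topological isomorphisms onto $L^2(\Y;E_1)$ and $L^2(\Y;E_2)$, respectively. Form $\tilde P=R_2\circ P\circ R_1^{-1}$; by the Composition Theorem~\ref{composition}, $\tilde P\in\Psi^0_{1,\delta}(\Y;E_1,E_2)$ with trivial group actions, i.e.\ it is an operator in the ordinary H\"ormander $(1,\delta)$-calculus acting on $L^2$. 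Conjugation identifies the Fredholm property of $P$ at level $s_0$ with that of $\tilde P$ on $L^2$, which holds by hypothesis. Invoking the classical fact that a Fredholm $\Psi^0_{1,\delta}$-operator on $L^2$ must be elliptic in the ordinary sense, $\sym(\tilde P)$ is pointwise invertible on $T^*\Y\setminus 0$ with the standard uniform bound. Because $\sym(R_1)$ and $\sym(R_2)$ are themselves invertible with controlled inverses, the multiplicativity $\sym(\tilde P)=\sym(R_2)\,\sym(P)\,\sym(R_1)^{-1}$ combined with the characterization in Remark~\ref{DNelliptdiscuss} transfers the ordinary ellipticity of $\tilde P$ back to the twisted ellipticity of $P$, completing $(c)\Rightarrow(a)$.

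The main obstacle is the classical input used in the last step, namely that a Fredholm operator in $\Psi^0_{1,\delta}(\Y;E_1,E_2)$ on $L^2$ must be elliptic in the ordinary sense. I would prove it by contradiction via a quasimode construction: if $\sym(\tilde P)$ failed to be invertible at some $(y_0,\eta_0)\in T^*\Y\setminus 0$, one localizes to a coordinate chart and builds a sequence of unit $L^2$-vectors $u_n$ concentrated at $y_0$ with frequencies scaled about $\lambda_n\eta_0$ ($\lambda_n\to\infty$) so that $\|\tilde P u_n\|_{L^2}\to 0$ while $u_n\rightharpoonup 0$. This contradicts the fact that a Fredholm operator has a finite-dimensional approximate kernel (any such sequence must have a subsequence converging strongly to an element of the kernel). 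The assumption $\delta<1$ is essential to ensure that lower-order contributions decay at high frequency and do not spoil the construction; the remaining localization and frame choices are routine.
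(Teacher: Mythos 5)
Your proposal is correct and uses essentially the same idea as the paper: conjugate by the invertible order-reducing operators of Theorem~\ref{OrderReduction} to strip off the group actions and reduce to the classical Fredholm/ellipticity equivalence for $(1,\delta)$-pseudodifferential operators with trivial actions. The paper chooses order-zero reducers $R_1,R_2$ so that all three conditions for $P$ are simultaneously equivalent to the corresponding ones for $R_2PR_1\in\Psi^{\mu}_{1,\delta}(\Y;E_1,E_2)$ on ordinary Sobolev spaces, whereas you normalize to $L^2$ for $(c)\Rightarrow(a)$ and prove $(a)\Rightarrow(b)$ directly from the parametrix; both routes are valid.
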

\begin{proof}
Let $R_1 \in \Psi^0_{1,\delta}(\Y;E_1,(E_1,a_1))$ and $R_2 \in \Psi^0_{1,\delta}(\Y;(E_2,a_2),E_2)$ be elliptic and invertible, and suppose that the inverses satisfy $R_1^{-1} \in \Psi^0_{1,\delta}(\Y;(E_1,a_1),E_1)$ and $R_2^{-1} \in \Psi^0_{1,\delta}(\Y;E_2,(E_2,a_2))$, respectively. Such operators exist according to Theorem~\ref{OrderReduction}. Then each of the stated properties for $P$ is equivalent to the corresponding property for the operator $R_2PR_1 \in \Psi^{\mu}_{1,\delta}(\Y;E_1,E_2)$. Consequently, the proof of Theorem~\ref{FredholmTheorem} reduces to the standard result where both $a_1$ and $a_2$ are the zero endomorphisms, and $P$ is an operator of order $\mu$ and type $(1,\delta)$.
\end{proof}

\begin{corollary}[Spectral Invariance]\label{SpectralInvariance}
Let $P \in \Psi^{\mu}_{1,\delta}(\Y;(E_1,a_1),(E_2,a_2))$, and suppose that
$$
P : H^{s_0+a_1}(\Y;E_1) \to H^{s_0-\mu+a_2}(\Y;E_2)
$$
is invertible for some $s_0 \in \R$. Then $P^{-1} \in \Psi^{-\mu}_{1,\delta}(\Y;(E_2,a_2),(E_1,a_1))$.
\end{corollary}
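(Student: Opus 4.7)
The plan is to use a parametrix identity together with a bootstrap across the Sobolev scale, mimicking the classical spectral invariance argument in the standard H\"ormander calculus. Since $P$ is invertible at one level, it is in particular Fredholm there, so by Theorem~\ref{FredholmTheorem} it is elliptic. Theorem~\ref{parametrixglobal} then produces $Q \in \Psi^{-\mu}_{1,\delta}(\Y;(E_2,a_2),(E_1,a_1))$ with $QP = 1 + R_1$ for some smoothing operator $R_1 \in \Psi^{-\infty}(\Y;E_1,E_1)$. The strategy is to give meaning to $P^{-1}$ on all of $\mathcal D'(\Y;E_2)$ and then recognize $P^{-1} = Q - R_1 P^{-1}$ as an element of the calculus, once the second term has been shown to be smoothing.

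First I would extend $P^{-1}$ consistently to every Sobolev level. For arbitrary $s \in \R$, Theorem~\ref{FredholmTheorem} gives that $P : H^{s+a_1}(\Y;E_1) \to H^{s-\mu+a_2}(\Y;E_2)$ is Fredholm. Injectivity follows from the Elliptic Regularity Theorem: any kernel element is smooth, hence lies in $H^{s_0+a_1}(\Y;E_1)$, where $P$ is assumed injective. For surjectivity, the range is closed by Fredholmness, and it contains $C^{\infty}(\Y;E_2)$, because for $f \in C^\infty \subset H^{s_0-\mu+a_2}(\Y;E_2)$ the element $u = P^{-1}f \in H^{s_0+a_1}(\Y;E_1)$ is again smooth by elliptic regularity applied to $Pu = f$, and hence belongs to $H^{s+a_1}(\Y;E_1)$ for every $s$. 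Density of $C^\infty(\Y;E_2)$ in $H^{s-\mu+a_2}(\Y;E_2)$ (from Theorem~\ref{GlobalMappingProps}) then forces the range to be all of $H^{s-\mu+a_2}(\Y;E_2)$. Uniqueness of solutions guarantees that these inverses agree on overlaps, so they assemble into a single linear map $P^{-1} : \mathcal D'(\Y;E_2) \to \mathcal D'(\Y;E_1)$.

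With $P^{-1}$ defined on all distributions, the operator $R_1 P^{-1}$ sends $\mathcal D'(\Y;E_2)$ into $C^\infty(\Y;E_1)$: any distribution $v$ lies in some $H^{t-\mu+a_2}(\Y;E_2)$, so $P^{-1} v$ is a distribution, and the smoothing operator $R_1$ carries it to a smooth section. Hence the Schwartz kernel of $R_1 P^{-1}$ is smooth, so $R_1 P^{-1} \in \Psi^{-\infty}(\Y;E_2,E_1)$. Combining with the parametrix identity on any Sobolev scale gives
\begin{equation*}
P^{-1} = (QP - R_1) P^{-1} = Q - R_1 P^{-1} \in \Psi^{-\mu}_{1,\delta}(\Y;(E_2,a_2),(E_1,a_1)).
\end{equation*}

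The main obstacle I expect is the bootstrap step: establishing invertibility of $P$ on every Sobolev scale simultaneously. This is precisely where the Fredholm Theorem (for closed range at every level), the Elliptic Regularity Theorem (both for injectivity on each level and for promoting preimages of smooth data to smooth sections), and density of $C^\infty$ all need to come together. Once that has been carried out, the final identification of $P^{-1}$ as an operator in the calculus is essentially a formal consequence of the parametrix identity.
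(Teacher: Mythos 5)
Your proof is correct, but it takes a somewhat more elaborate route than the paper, and the comparison is instructive. You use only a left parametrix identity $QP = 1 + R_1$, which forces the formula $P^{-1} = Q - R_1 P^{-1}$; since $P^{-1}$ there stands alone next to a smoothing factor, you first have to extend $P^{-1}$ to all distributions, and that is why you need the bootstrap across the Sobolev scale (injectivity by elliptic regularity plus injectivity at $s_0$, surjectivity by closed range plus density of $C^\infty$). The paper avoids the bootstrap entirely: starting from both identities $PQ = 1 + R_r$ and $QP = 1 + R_l$, one derives
$P^{-1} = Q - Q R_r + R_l P^{-1} R_r$,
where $P^{-1}$ appears only between two smoothing operators. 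Then $R_r$ already carries any distribution into $C^\infty \subset H^{s_0-\mu+a_2}(\Y;E_2)$, so one only ever invokes the originally given bounded inverse at level $s_0$, and $R_l P^{-1} R_r$ is seen to be smoothing immediately. Your version does a bit more work, but it delivers as a by-product the fact that $P$ is an isomorphism on the entire Sobolev scale, which the paper's streamlined argument only yields after the corollary is established. Both are valid; the paper's two-sided sandwich is the cleaner path to the statement as posed.
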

\begin{proof}
By Theorem~\ref{FredholmTheorem}, $P$ is elliptic. Let $Q \in \Psi^{-\mu}_{1,\delta}(\Y;(E_2,a_2),(E_1,a_1))$ be a parametrix such that $P\circ Q = 1 + R_r$ and $Q\circ P = 1 + R_l$, where $R_l$ and $R_r$ are smoothing, see Theorem~\ref{parametrixglobal}. Then
$$
P^{-1} = Q - Q\circ R_r + R_l\circ P^{-1}\circ R_r,
$$
and $R_l\circ P^{-1}\circ R_r$ is smoothing because it extends to an operator that maps distributions to $C^{\infty}$-functions. Consequently, $P^{-1} \in \Psi^{-\mu}_{1,\delta}(\Y;(E_2,a_2),(E_1,a_1))$ as desired.
\end{proof}

\begin{corollary}[Functional Calculus]\label{FunctionalCalculus}
Let $P \in \Psi^{0}_{1,\delta}(\Y;(E,a),(E,a))$. Then the spectrum $\Sigma$ of the bounded operator $P : H^{s+a}(\Y;E) \to H^{s+a}(\Y;E)$ is independent of $s \in \R$.

If $f$ is a holomorphic function in a neighborhood of $\Sigma$, then the operator $f(P)$ defined via the holomorphic functional calculus belongs to $\Psi^{0}_{1,\delta}(\Y;(E,a),(E,a))$.
\end{corollary}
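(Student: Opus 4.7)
For the spectral independence, I would apply Corollary~\ref{SpectralInvariance} directly. If $\lambda$ lies in the resolvent set of $P$ acting on $H^{s_0+a}(\Y;E)$ for some $s_0$, then spectral invariance gives $(\lambda - P)^{-1} \in \Psi^0_{1,\delta}(\Y;(E,a),(E,a))$, and by Theorem~\ref{GlobalMappingProps} this operator extends continuously to $H^{s+a}(\Y;E)$ for every $s$. Density of $C^\infty(\Y;E)$ in each Sobolev space shows the extension remains a two-sided inverse of $\lambda - P$, so $\lambda$ lies in the resolvent set at every level, and $\Sigma$ is independent of $s$.

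For the functional calculus, choose a contour $\Gamma$ in the domain of holomorphy of $f$ that encloses $\Sigma$ and consider
\begin{equation*}
f(P) = \frac{1}{2\pi i}\int_\Gamma f(\lambda)(\lambda - P)^{-1}\,d\lambda.
\end{equation*}
The plan is to interpret this as a Bochner integral in the Fr\'echet space $\Psi^0_{1,\delta}(\Y;(E,a),(E,a))$, equipped with the topology coming from the local symbol seminorms of Proposition~\ref{basicprops}(a) together with the seminorms on the smoothing part. Granted that the integrand is continuous (indeed holomorphic) in $\lambda \in \Gamma$ with values in this Fr\'echet space, the integral defines an element of $\Psi^0_{1,\delta}(\Y;(E,a),(E,a))$, and evaluation against any fixed $H^{s+a}(\Y;E)$ recovers the usual operator-theoretic holomorphic functional calculus by continuity. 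The task thus reduces to showing that $\lambda \mapsto (\lambda - P)^{-1}$ is holomorphic from $\C \setminus \Sigma$ into $\Psi^0_{1,\delta}(\Y;(E,a),(E,a))$.

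To establish this holomorphy I would refine the argument of Corollary~\ref{SpectralInvariance}. By Theorem~\ref{FredholmTheorem} the operator $\lambda - P$ is elliptic for every $\lambda \in \C \setminus \Sigma$, so the scheme of Theorem~\ref{parametrixglobal} produces a parametrix $Q_\lambda \in \Psi^0_{1,\delta}(\Y;(E,a),(E,a))$; this construction (symbolic inversion modulo lower order, Neumann correction, asymptotic summation, and patching) can be arranged to depend holomorphically on $\lambda$, uniformly on compact subsets of $\C \setminus \Sigma$. Writing $(\lambda - P)Q_\lambda = 1 + S_\lambda$ with $S_\lambda \in \Psi^{-\infty}$ also holomorphic in $\lambda$, we obtain
\begin{equation*}
(\lambda - P)^{-1} = Q_\lambda - (\lambda - P)^{-1}S_\lambda,
\end{equation*}
which exhibits the resolvent as the sum of a holomorphic $\Psi^0_{1,\delta}$-valued term and the composition of the bounded operator $(\lambda - P)^{-1}$ with the smoothing $S_\lambda$. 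The latter composition is itself smoothing, with all its Fr\'echet seminorms controlled by the operator norms of $(\lambda - P)^{-1}$ on the Sobolev scale together with the seminorms of $S_\lambda$, so it depends holomorphically on $\lambda$ with values in $\Psi^{-\infty}$.

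The principal technical obstacle is obtaining locally uniform estimates along $\Gamma$ on the Sobolev operator norms of $(\lambda - P)^{-1}$, and more generally arranging the holomorphic dependence of the parametrix at the level of the Fr\'echet topology rather than merely pointwise. For the uniform resolvent bounds, the order-reducing operators $\Lambda_s$ from Theorem~\ref{OrderReduction} allow one to transfer classical resolvent bounds from the level $s_0$ where $P$ is self-resolvent to all other levels. Once these uniform estimates and the holomorphic parametrix construction are in place, the Bochner integral converges in $\Psi^0_{1,\delta}(\Y;(E,a),(E,a))$, and $f(P)$ is an element of that class.
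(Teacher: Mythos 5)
Your handling of the first assertion (spectral independence) is exactly the paper's: reduce to Corollary~\ref{SpectralInvariance} together with the mapping properties and density of $C^\infty$. That part is fine.

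For the second assertion your route diverges from the paper's. The paper simply observes that $\Psi^0_{1,\delta}(\Y;(E,a),(E,a))$ is a continuously embedded, spectrally invariant Fr\'echet subalgebra of $\L(H^{s+a}(\Y;E))$, i.e.\ a $\Psi$-algebra in the sense of Gramsch, and then invokes the abstract theorem that $\Psi$-algebras are closed under holomorphic functional calculus. Your proof instead attempts to construct $f(P)$ directly as a Bochner integral with values in $\Psi^0_{1,\delta}$, which forces you to establish that $\lambda\mapsto(\lambda-P)^{-1}$ is holomorphic as a $\Psi^0_{1,\delta}$-valued map. That is a genuinely different, more hands-on decomposition of the same problem: the paper trades the work of checking holomorphy of the resolvent in the Fr\'echet topology for a citation, while you try to carry it out by hand. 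Both reach the same conclusion, and each has something to recommend it: the $\Psi$-algebra route is short and shows exactly which structural properties are responsible (continuous embedding, common unit, spectral invariance); your route is self-contained and does not lean on the general theory.

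The substantive point to be aware of is that the part you describe as the ``principal technical obstacle'' is precisely the content of the $\Psi$-algebra theorem, and in your sketch it is not fully discharged. The holomorphic dependence of the parametrix $Q_\lambda$ on $\lambda$ (symbolic inversion, Neumann correction, asymptotic summation with $\lambda$-independent cutoffs, patching) is plausible and can be arranged, but it needs a real argument; it is not an immediate consequence of Theorem~\ref{parametrixglobal} as stated, which is purely pointwise in the operator. Likewise, your ``transfer'' of resolvent bounds across the scale via $\Lambda_s$ is a bit loosely phrased: conjugation by $\Lambda_s$ turns the resolvent of $P$ at level $s$ into the resolvent of $\Lambda_sP\Lambda_s^{-1}$ at level $0$, which is a different operator (with the same spectrum), so what you actually use is just Banach-space holomorphy of the resolvent at each fixed level together with compactness of $\Gamma$. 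None of these are fatal, but they show that your route, carried out in full, essentially amounts to re-proving Gramsch's result in the specific case at hand. If you want a self-contained argument it is worth writing out the holomorphic parameter-dependence of the parametrix carefully; otherwise, the cleaner path is the one the paper takes.
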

\begin{proof}
Independence of the spectrum of $s \in \R$ follows at once from Corollary~\ref{SpectralInvariance}. Moreover, $\Psi^{0}_{1,\delta}(\Y;(E,a),(E,a))$ carries a natural Fr\'echet topology such that
$$
\Psi^{0}_{1,\delta}(\Y;(E,a),(E,a)) \hookrightarrow \L(H^{s+a}(\Y;E)),
$$
and whenever $P \in \Psi^{0}_{1,\delta}(\Y;(E,a),(E,a))$ is invertible in $\L(H^{s+a}(\Y;E))$ the inverse belongs to $\Psi^{0}_{1,\delta}(\Y;(E,a),(E,a))$. Consequently, $\Psi^{0}_{1,\delta}(\Y;(E,a),(E,a))$ is a $\Psi$-algebra in $\L(H^{s+a}(\Y;E))$ in the sense of \cite{Gramsch1984}, and therefore invariant with respect to holomorphic functional calculus.
\end{proof}

\begin{theorem}[Index Theorem]
Let $P \in \Psi^{\mu}_{1,\delta}(\Y;(E_1,a_1),(E_2,a_2))$ be elliptic, and suppose that $P$ has twisted homogeneous principal symbol $\sym(P)$ on $T^*\Y\setminus 0$. Then
\begin{equation*}
0 \to \pi^*E_1 \xrightarrow{\sym(P)} \pi^*E_2 \to 0
\end{equation*}
is a short exact sequence outside the zero section on $T^*\Y$ and consequently induces an element $[\sym(P)]$ in the $K$-group $K(T^*\Y)$ with compact support. The Fredholm index $\ind(P)$ of the operator $P : H^{s+a_1}(\Y;E_1) \to H^{s-\mu+a_2}(\Y;E_2)$ is given by
$$
\ind(P) = \tind([\sym(P)]),
$$
where $\tind : K(T^*\Y) \to {\mathbb Z}$ is the topological index map, see \cite{AtiyahSinger1,AtiyahSinger3}.
\end{theorem}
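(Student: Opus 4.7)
The plan is to reduce the theorem to the classical Atiyah--Singer index theorem by using the order reductions of Theorem~\ref{OrderReduction} to strip off the bundle actions. By that theorem, choose invertible operators $R_1 \in \Psi^0_{1,\delta}(\Y;E_1,(E_1,a_1))$ and $R_2 \in \Psi^0_{1,\delta}(\Y;(E_2,a_2),E_2)$ whose principal symbols are the identity on the cosphere bundle $S^*\Y$, extended by twisted homogeneity of degree zero. Form
\begin{equation*}
\tilde P = R_2 \circ P \circ R_1 \in \Psi^{\mu}_{1,\delta}(\Y;E_1,E_2),
\end{equation*}
which by the Composition Theorem is elliptic and is an operator between bundles carrying only the trivial actions. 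Because $R_1$ and $R_2$ are invertible between the relevant Sobolev spaces, $\ind(P) = \ind(\tilde P)$.

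A direct computation using the twisted homogeneity relations of $\sym(R_1)$, $\sym(P)$, and $\sym(R_2)$ shows that for $\omega \in S^*\Y$ and $\varrho>0$,
\begin{equation*}
\sym(\tilde P)(y,\varrho\omega) = \varrho^{a_2(y)}\cdot \varrho^{\mu}\varrho^{-a_2(y)}\sym(P)(y,\omega)\varrho^{a_1(y)}\cdot \varrho^{-a_1(y)} = \varrho^{\mu}\sym(P)(y,\omega).
\end{equation*}
Hence $\sym(\tilde P)$ is classically (untwistedly) homogeneous of degree $\mu$, equal to $\sym(P)$ on $S^*\Y$. Applying the classical Atiyah--Singer index theorem to the elliptic operator $\tilde P$ with homogeneous principal symbol then yields $\ind(\tilde P) = \tind([\sym(\tilde P)])$.

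It remains to identify the K-theory classes $[\sym(P)] = [\sym(\tilde P)]$ in $K(T^*\Y)$. For this, consider the explicit homotopy on $T^*\Y\setminus 0$, writing $\eta = \varrho\omega$ with $\omega \in S^*\Y$:
\begin{equation*}
\sigma_t(y,\varrho\omega) = \varrho^{\mu}\varrho^{-t\,a_2(y)}\sym(P)(y,\omega)\varrho^{t\,a_1(y)}, \qquad t \in [0,1].
\end{equation*}
At $t=0$ this equals $\sym(\tilde P)$, and at $t=1$ it recovers $\sym(P)$. For each $t$, the ellipticity of $P$ (i.e., invertibility of $\sym(P)(y,\omega)$ on $S^*\Y$) together with the invertibility of $\varrho^{\pm t a_j(y)}$ guarantees that $\sigma_t$ is a fiberwise isomorphism $\pi^*E_1 \to \pi^*E_2$ outside the zero section. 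Homotopy invariance of K-theory with compact supports then gives $[\sigma_0] = [\sigma_1]$, and combining this with the two preceding paragraphs yields $\ind(P) = \tind([\sym(P)])$.

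The main obstacle is to legitimately invoke the classical Atiyah--Singer theorem at the intermediate step, since $\tilde P$ lies in the type $(1,\delta)$ calculus rather than the classical type $(1,0)$ calculus. This can be resolved either by appealing directly to known extensions of Atiyah--Singer to the type $(1,\delta)$ setting (the argument is K-theoretic, so only the principal symbol on $S^*\Y$ matters), or more concretely by observing that $\tilde P$ can be deformed within elliptic operators to a classical operator of type $(1,0)$ with identical principal symbol on $S^*\Y$, thereby preserving both the Fredholm index and the symbol K-theory class. A minor additional point is ensuring that the twisted homogeneous $\sym(P)$ genuinely defines a class in the compactly supported K-theory $K(T^*\Y)$; this is done by extending $\sym(P)$ across the zero section by a standard cutoff, and the homotopy above shows that the resulting class is independent of the choice of extension.
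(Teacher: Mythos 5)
Your proposal is correct and follows essentially the same route as the paper: both use the order reductions $R_1,R_2$ from Theorem~\ref{OrderReduction} to pass to $\tilde P = R_2 P R_1$ with untwisted homogeneous symbol, apply classical Atiyah--Singer, and identify $[\sym(\tilde P)]=[\sym(P)]$ via the homotopy $t\mapsto\varrho^{\mu}\varrho^{-ta_2}\sym(P)|_{S^*\Y}\varrho^{ta_1}$ (the paper's $H(t,\cdot)$ is literally your $\sigma_t$). Your remark about the need to justify Atiyah--Singer in the type $(1,\delta)$ setting is a reasonable caution that the paper leaves implicit.
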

\begin{proof}
Let $S^*\Y$ be the cosphere bundle with respect to some Riemannian metric, and let $h$ be the restriction of $\sym(P)$ to $S^*\Y$. Then $\sym(P)$ is obtained from $h$ via extension by twisted homogeneity of degree $\mu$ with respect to the pull-backs of the actions $\varrho^{a_j}$ on $E_j$, $j = 1,2$, see \eqref{twistedglobal}. For $0 \leq t \leq 1$ define $H(t,\cdot)$ by extending $h$ by twisted homogeneity of degree $\mu$ with respect to the actions $\varrho^{ta_j}$ on $E_j$, $j=1,2$, to all of $T^*\Y\setminus 0$. Then $H(1,\cdot) = \sym(P)$, and $q = H(0,\cdot)$ is an ordinary homogeneous bundle isomorphism of degree $\mu$. By construction,
\begin{equation*}
0 \to \pi^*E_1 \xrightarrow{H}\pi^*E_2 \to 0
\end{equation*}
is exact on $[0,1]\times T^*\Y$ away from $[0,1]\times 0$, and consequently $[q] = [\sym(P)] \in K(T^*\Y)$.
Now pick
$$
R_1 \in \Psi^0_{1,\delta}(\Y;E_1,(E_1,a_1)) \textup{ with } R_1^{-1} \in \Psi^0_{1,\delta}(\Y;(E_1,a_1),E_1)
$$
such that $\sym(R_1)|_{S^*\Y} = \textup{Id}_{\pi^*E_1}$, and likewise
$$
R_2 \in \Psi^0_{1,\delta}(\Y;(E_2,a_2),E_2) \textup{ with } R_2^{-1} \in \Psi^0_{1,\delta}(\Y;E_2,(E_2,a_2))
$$
with $\sym(R_2)|_{S^*\Y} = \textup{Id}_{\pi^*E_2}$; the existence of such operators is guaranteed by Theorem~\ref{OrderReduction}. Then
$$
Q = R_2\circ P \circ R_1 \in \Psi^{\mu}_{1,\delta}(\Y;E_1,E_2),
$$
and
$$
\sym(R_2\circ P \circ R_1) = \sym(R_2)\sym(P)\sym(R_1) = q.
$$
The latter relation for the principal symbols is true because the restriction of $\sym(R_2)\sym(P)\sym(R_1)$ to $S^*\Y$ equals $h$, and $\sym(R_2)\sym(P)\sym(R_1)$ is homogeneous of degree $\mu$ (without twisting). The Atiyah-Singer Index Theorem \cite{AtiyahSinger1} now implies that the Fredholm index of the operator
$Q : H^s(\Y;E_1) \to H^{s-\mu}(\Y;E_2)$ is given by
$$
\ind(Q) = \tind([q]) = \tind([\sym(P)]).
$$
On the other hand, since both
$$
R_1 : H^s(\Y;E_1) \to H^{s+a_1}(\Y;E_1) \textup{ and }
R_2 : H^{s-\mu+a_2}(\Y;E_2) \to H^{s-\mu}(\Y;E_2)
$$
are isomorphisms, we see that
$$
\ind\bigl(Q : H^s(\Y;E_1) \to H^{s-\mu}(\Y;E_2)\bigr) =
\ind\bigl(P : H^{s+a_1}(\Y;E_1) \to H^{s-\mu+a_2}(\Y;E_2)\bigr).
$$
This finishes the proof of the theorem.
\end{proof}


\section{Toeplitz operators}\label{sec-Toeplitz Operators}

The following lemma utilizes standard arguments from $K$-theory of operator algebras. The results on spectral invariance and holomorphic functional calculus from the previous section insure that they are applicable here.

\begin{lemma}\label{projectionsincalculus}
Let $\wp : \pi^*E \to \pi^*E$ be a projection on $T^*\Y\setminus 0$ that is twisted homogeneous of degree zero with respect to the action generated by $a \in C^{\infty}(\Y;\End(E))$. Then there is a projection $\Pi = \Pi^2 \in \Psi^0_{1,\delta}(\Y;(E,a),(E,a))$ such that $\sym(\Pi)=\wp$.
\end{lemma}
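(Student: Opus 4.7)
The plan is to construct $\Pi$ in two stages. First I would lift $\wp$ to an operator $P_0$ in the calculus whose square differs from itself only by a lower-order term, and then extract a genuine idempotent from $P_0$ via the holomorphic functional calculus of Corollary~\ref{FunctionalCalculus}. The algebraic bridge between the two stages is that $\wp^2=\wp$ forces $\sym(P_0^2)=\sym(P_0)$ by Theorem~\ref{composition}, so that $P_0^2-P_0\in\Psi^{-(1-\delta)}_{1,\delta}(\Y;(E,a),(E,a))$; since $1-\delta>0$, this operator is compact on $H^{0+a}(\Y;E)$ by Theorem~\ref{GlobalMappingProps}.

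The construction of $P_0$ itself is routine: cover $\Y$ by finitely many domains of local charts admitting $\delta$-admissible trivializations of $E$, quantize $(1-\chi(\eta))\wp(y,\eta)$ chart by chart (with $\chi$ an excision function of the origin), and patch by a subordinate partition of unity as described in the paragraph preceding Theorem~\ref{composition}. This produces $P_0\in \Psi^0_{1,\delta}(\Y;(E,a),(E,a))$ with $\sym(P_0)=\wp$.

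From the compactness of $P_0^2-P_0$ the essential spectrum of $P_0$ acting on $H^{0+a}(\Y;E)$ is contained in $\set{0,1}$, so $\sigma(P_0)$ is the union of $\set{0,1}$ (possibly) with a countable set of isolated eigenvalues whose only possible accumulation points are $0$ and $1$. Pick $r\in(0,1/2)$ so that the circle $\gamma=\set{z\in\C:|z-1|=r}$ misses $\sigma(P_0)$---almost every $r$ works. Then $\gamma$ splits $\sigma(P_0)$ into two disjoint clopen pieces, containing $1$ and $0$ respectively. Let $f$ be holomorphic on an open neighborhood of $\sigma(P_0)$, identically $1$ near the inner piece and $0$ near the outer one. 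Corollary~\ref{FunctionalCalculus} delivers $\Pi:=f(P_0)\in \Psi^0_{1,\delta}(\Y;(E,a),(E,a))$, and $f^2=f$ on the neighborhood gives $\Pi^2=\Pi$.

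The remaining step---and the main obstacle in my view---is to verify $\sym(\Pi)=\wp$. Writing $\Pi$ as the Riesz contour integral $\frac{1}{2\pi i}\int_\gamma(z-P_0)^{-1}\,dz$, spectral invariance (Corollary~\ref{SpectralInvariance}) shows each resolvent $(z-P_0)^{-1}$ lies in $\Psi^0_{1,\delta}(\Y;(E,a),(E,a))$, and multiplicativity of the principal symbol gives $\sym((z-P_0)^{-1})=(z-\wp)^{-1}=z^{-1}(\textup{Id}-\wp)+(z-1)^{-1}\wp$, computed fiberwise on $T^*\Y\setminus 0$. What needs care is commuting $\sym$ with the contour integral, which I would justify by the holomorphy of $z\mapsto(z-P_0)^{-1}$ in the Fr\'echet topology of $\Psi^0_{1,\delta}(\Y;(E,a),(E,a))$---a standard consequence of the $\Psi$-algebra property underlying Corollary~\ref{FunctionalCalculus}. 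A fiberwise residue computation, using that $\gamma$ encloses only $z=1$, then yields $\sym(\Pi)=\wp$.
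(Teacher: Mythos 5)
Your proof takes essentially the same approach as the paper's: quantize $\wp$ to obtain $P_0$ with compact $P_0^2-P_0$ on $H^{a}(\Y;E)$, conclude the spectrum of $P_0$ is discrete away from $\{0,1\}$, and extract the Riesz projector around $1$ via the holomorphic functional calculus of Corollary~\ref{FunctionalCalculus}. Your residue computation verifying $\sym(\Pi)=\wp$ fills in a step the paper states without elaboration.
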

\begin{proof}
Let ${\mathfrak P} \in \Psi^0_{1,\delta}(\Y;(E,a),(E,a))$ with $\sym({\mathfrak P}) = \wp$. Then ${\mathfrak P}^2 - {\mathfrak P}$ is an operator in $L_{1,\delta}^{-1+\delta}(\Y;(E,a),(E,a))$, and consequently
$$
{\mathfrak P}^2 - {\mathfrak P} : H^{a}(\Y;E) \to H^{a}(\Y;E)
$$
is compact. By analytic Fredholm theory, the spectrum of ${\mathfrak P} \in \L(H^{a}(\Y;E))$ is discrete in $\C \setminus \{0,1\}$, and consequently there exists $0 < \varepsilon < 1$ such that
$\spec({\mathfrak P})\cap\partial B_{\varepsilon}(1) = \emptyset$. Define
$$
\Pi = \frac{1}{2\pi i }\int\limits_{\partial B_{\varepsilon}(1)} (\sigma - {\mathfrak P})^{-1}\,d\sigma \in \L(H^{a}(\Y;E)).
$$
Then $\Pi = \Pi^2$, and by Corollary~\ref{FunctionalCalculus} we have $\Pi \in \Psi^0_{1,\delta}(\Y;(E,a),(E,a))$, and $\sym(\Pi) = \wp$.
\end{proof}

Lemma~\ref{projectionsincalculus} guarantees that the projections $\Pi_k$ with prescribed twisted homogeneous principal symbols alluded to in the assumptions of the following theorem exist in the calculus.

\begin{theorem}
Fix a Riemannian metric on $\Y$, and let
$$
P \in \Psi^{\mu}_{1,\delta}(\Y;(E_1,a_1),(E_2,a_2))
$$
have twisted homogeneous principal symbol $\sym(P) : \pi^*E_1 \to \pi^*E_2$. Suppose that there are subbundles $J_1 \subset \pi^*E_1\big|_{S^*\Y}$ and $J_2 \subset \pi^*E_2\big|_{S^*\Y}$ such that $\sym(P) : J_1 \to J_2$ is invertible over $S^*\Y$. Let $\wp_k \in C^{\infty}(S^*\Y;\pi^*E_k\big|_{S^*\Y})$ be bundle projections $\pi^*E_k\big|_{S^*\Y} \to J_k$, $k=1,2$, and let $\Pi_k=\Pi_k^2 \in \Psi^0_{1,\delta}(\Y;(E_k,a_k),(E_k,a_k))$ with $\sym(\Pi_k) = \wp_k$ on $S^*\Y$. Then there exists $Q \in \Psi^{-\mu}_{1,\delta}(\Y;(E_2,a_2),(E_1,a_1))$ having twisted homogeneous principal symbol such that
\begin{align*}
\big(\Pi_2 P \Pi_1\bigr)\circ\bigl(\Pi_1 Q \Pi_2\bigr) &= \Pi_2 + \bigl(\Pi_2R_2\Pi_2\bigr), \\
\bigl(\Pi_1 Q \Pi_2\bigr)\circ\big(\Pi_2 P \Pi_1\bigr) &= \Pi_1 + \bigl(\Pi_1R_1\Pi_1\bigr)
\end{align*}
with $R_k \in \Psi^{-\infty}(\Y;E_k,E_k)$, $k = 1,2$. In particular,
$$
\Pi_2 P \Pi_1 : \Pi_1H^{s+a_1}(\Y;E_1) \to \Pi_2H^{s-\mu+a_2}(\Y;E_2)
$$
is Fredholm for every $s \in \R$, and $\Pi_1Q\Pi_2$ is a Fredholm inverse.
\end{theorem}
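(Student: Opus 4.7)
The plan is to construct $Q$ in two stages: first produce a preliminary $Q_0 \in \Psi^{-\mu}_{1,\delta}$ realizing the natural inverse at the symbolic level, then perform a Neumann-type correction inside the idempotent-sandwiched subalgebras associated with $\Pi_1$ and $\Pi_2$. Write $\sigma = \sym(P)$, and let $\sigma_0 : J_1 \to J_2$ be the restriction of $\sigma|_{S^*\Y}$, invertible by hypothesis; write $\iota_k : J_k \hookrightarrow \pi^*E_k|_{S^*\Y}$ for the inclusions, so that $\wp_k \iota_k = \textup{Id}_{J_k}$ and $\sigma\iota_1 = \iota_2\sigma_0$ (the latter encoding the assumption that $\sigma$ preserves the subbundle structure). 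Define $q = \iota_1 \circ \sigma_0^{-1} \circ \wp_2$ on $S^*\Y$ and extend by twisted homogeneity of degree $-\mu$ with respect to $\varrho^{a_2}$ in the domain and $\varrho^{a_1}$ in the range; then choose $Q_0 \in \Psi^{-\mu}_{1,\delta}(\Y;(E_2,a_2),(E_1,a_1))$ with $\sym(Q_0) = q$. A direct bundle calculation on $S^*\Y$ using the identities above gives
\begin{equation*}
\wp_1 q \wp_2 \sigma \wp_1 = \wp_1 \quad\text{and}\quad \wp_2 \sigma \wp_1 q \wp_2 = \wp_2;
\end{equation*}
these symbolic identities are the algebraic core of the construction.

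By the Composition Theorem~\ref{composition}, the operators
\begin{equation*}
T_L := \Pi_1 Q_0 \Pi_2 \cdot \Pi_2 P \Pi_1 - \Pi_1, \quad T_R := \Pi_2 P \Pi_1 \cdot \Pi_1 Q_0 \Pi_2 - \Pi_2
\end{equation*}
have vanishing principal symbols, so $T_L \in \Psi^{-(1-\delta)}_{1,\delta}(\Y;(E_1,a_1),(E_1,a_1))$ and $T_R \in \Psi^{-(1-\delta)}_{1,\delta}(\Y;(E_2,a_2),(E_2,a_2))$. The literal idempotency $\Pi_k^2 = \Pi_k$ supplied by Lemma~\ref{projectionsincalculus} yields the exact sandwich relations $T_L = \Pi_1 T_L \Pi_1$ and $T_R = \Pi_2 T_R \Pi_2$, which is what keeps the iteration inside the corresponding Toeplitz subalgebras. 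The powers $T_L^k, T_R^k$ then have orders $-k(1-\delta) \to -\infty$, and asymptotic summation (the global analog of Proposition~\ref{basicprops}(c)) produces operators $B_L, B_R$ of order zero with $B_L \sim \sum_{k\geq 0}(-T_L)^k$ and $B_R \sim \sum_{k\geq 0}(-T_R)^k$, which may be arranged to satisfy $B_L = \Pi_1 B_L \Pi_1$ and $B_R = \Pi_2 B_R \Pi_2$. Telescoping the asymptotic sums gives $B_L(\Pi_1+T_L) = \Pi_1 + S_L$ and $(\Pi_2+T_R)B_R = \Pi_2 + S_R$ with smoothing remainders $S_L, S_R$.

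Set $Q_L = B_L Q_0 \Pi_2$ and $Q_R = \Pi_1 Q_0 B_R$. Both lie in $\Psi^{-\mu}_{1,\delta}(\Y;(E_2,a_2),(E_1,a_1))$, satisfy $\Pi_1 Q_L \Pi_2 = Q_L$ and $\Pi_1 Q_R \Pi_2 = Q_R$ exactly, and have twisted homogeneous principal symbol $q$, since the Neumann corrections $B_L - \Pi_1$ and $B_R - \Pi_2$ are of strictly lower order and thus do not affect the principal symbol. The two identities
\begin{equation*}
(\Pi_1 Q_L \Pi_2)(\Pi_2 P \Pi_1) = \Pi_1 + \Pi_1 R_1 \Pi_1, \quad (\Pi_2 P \Pi_1)(\Pi_1 Q_R \Pi_2) = \Pi_2 + \Pi_2 R_2 \Pi_2
\end{equation*}
with smoothing $R_k$ follow directly from the telescoping relations above, and the standard associativity argument
\begin{equation*}
\Pi_1 Q_L \Pi_2 \equiv (\Pi_1 Q_L \Pi_2)(\Pi_2 P \Pi_1)(\Pi_1 Q_R \Pi_2) \equiv \Pi_1 Q_R \Pi_2 \pmod{\Psi^{-\infty}}
\end{equation*}
identifies $Q_L$ and $Q_R$ modulo smoothing, so $Q := Q_L$ satisfies both required identities (the smoothing discrepancy in the second being absorbed into $R_2$, noting that any smoothing operator $S$ arising as the difference $(\Pi_2 P \Pi_1)(\Pi_1 Q \Pi_2) - \Pi_2$ automatically satisfies $S = \Pi_2 S \Pi_2$). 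The Fredholm conclusion is then automatic: on $\Pi_1 H^{s+a_1}(\Y;E_1)$ and $\Pi_2 H^{s-\mu+a_2}(\Y;E_2)$ the projectors $\Pi_k$ restrict to the identity, the smoothing remainders $\Pi_k R_k \Pi_k$ are compact by Theorem~\ref{GlobalMappingProps}(d), and so $\Pi_2 P \Pi_1$ is Fredholm with Fredholm inverse $\Pi_1 Q \Pi_2$. The principal technical obstacle is maintaining the exact (not merely modulo smoothing) sandwich structure of $T_L, T_R, B_L, B_R$ throughout the Neumann iteration so that it closes inside the appropriate Toeplitz subalgebra; this is precisely where the literal idempotency from Lemma~\ref{projectionsincalculus} is essential.
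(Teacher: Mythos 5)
Your proof is correct, but it follows a genuinely different route from the paper's. You construct a preliminary operator $Q_0$ directly from the natural symbolic inverse $q=\iota_1\sigma_0^{-1}\wp_2$ and then run a Neumann iteration inside the Toeplitz corner algebra $\Pi_1\Psi^{\,0}_{1,\delta}\Pi_1$ (resp.\ $\Pi_2\Psi^{\,0}_{1,\delta}\Pi_2$), exploiting the exact idempotency of the $\Pi_k$ from Lemma~\ref{projectionsincalculus} to keep the iterates and asymptotic sums sandwiched. The paper instead assembles a $2\times 2$ elliptic system $\mathscr{P}$ acting on $E_1\oplus E_2$, with $\mathscr{P}_{2,1}=\Pi_2 P\Pi_1$, the $(1,2)$ block built from the adjoint of $\sym(P)$, and diagonal blocks compressed to the complementary corners $(1-\Pi_k)\cdots(1-\Pi_k)$; invertibility of $\sym(P):J_1\to J_2$ makes $\mathscr{P}$ elliptic, and $Q$ is extracted as the $(1,2)$ entry of the parametrix $\mathscr{Q}$ supplied by Theorem~\ref{parametrixglobal}, the compressed diagonal blocks automatically suppressing the cross terms once one sandwiches $\mathscr{Q}\mathscr{P}$ and $\mathscr{P}\mathscr{Q}$ between the $\Pi_k$. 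Your approach is more explicit at the symbolic level and makes the corner-algebra structure transparent; the paper's is shorter given that the global Parametrix Theorem is already in hand. One small notational wrinkle: as written, $B_L\sim\sum_{k\geq 0}(-T_L)^k$ has leading term equal to the identity $1$, which is incompatible with $B_L=\Pi_1B_L\Pi_1$ unless $\Pi_1=1$. The sandwiched operator you actually need has leading term $\Pi_1$, i.e.\ $B_L\sim\Pi_1+\sum_{k\geq 1}(-T_L)^k$, obtained by replacing any asymptotic sum by its compression $\Pi_1(\cdot)\Pi_1$; the telescoping computation and the rest of your argument are unaffected.
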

\begin{proof}
Let
$$
{\mathscr P} = \begin{pmatrix} {\mathscr P}_{1,1} & {\mathscr P}_{1,2} \\ {\mathscr P}_{2,1} & {\mathscr P}_{2,2} \end{pmatrix} \in \Psi^{\mu}_{1,\delta}\Biggl(\Y;\Biggl(\begin{array}{c} E_1 \\ \oplus \\ E_2 \end{array},\begin{pmatrix} a_1 & 0 \\ 0 & a_2 \end{pmatrix}\Biggr),\Biggl(\begin{array}{c} E_1 \\ \oplus \\ E_2 \end{array},\begin{pmatrix} a_1 & 0 \\ 0 & a_2 \end{pmatrix}\Biggr)\Biggr)
$$
with ${\mathscr P}_{i,j} : C^{\infty}(\Y;E_j) \to C^{\infty}(\Y;E_i)$ have twisted homogeneous principal symbol $\sym({\mathscr P})$ such that the restriction of $\sym({\mathscr P})$ to $S^*\Y$ is given by
$$
\sym({\mathscr P}) = \begin{pmatrix}
1 - \wp_1 & \wp_1[\sym(P) : J_1\to J_2]^{-1}\wp_2 \\
\wp_2\sym(P)\wp_1 & 1 - \wp_2
\end{pmatrix} :
\begin{array}{c} \pi^*E_1 \\ \oplus \\ \pi^*E_2 \end{array}
\to
\begin{array}{c} \pi^*E_1 \\ \oplus \\ \pi^*E_2 \end{array}.
$$
We further pick the lower left corner of ${\mathscr P}$ to be ${\mathscr P}_{2,1} = \Pi_2P\Pi_1$, and ${\mathscr P}_{k,k} = (1 - \Pi_k){\mathscr P}_{k,k}(1-\Pi_k)$ for $k = 1,2$.

With this definition, our assumption on $\sym(P)$ implies that ${\mathscr P}$ is elliptic, and by Theorem~\ref{parametrixglobal} there exists a parametrix
$$
{\mathscr Q} = \begin{pmatrix} {\mathscr Q}_{1,1} & {\mathscr Q}_{1,2} \\ {\mathscr Q}_{2,1} & {\mathscr Q}_{2,2} \end{pmatrix} \in \Psi^{-\mu}_{1,\delta}\Biggl(\Y;\Biggl(\begin{array}{c} E_1 \\ \oplus \\ E_2 \end{array},\begin{pmatrix} a_1 & 0 \\ 0 & a_2 \end{pmatrix}\Biggr),\Biggl(\begin{array}{c} E_1 \\ \oplus \\ E_2 \end{array},\begin{pmatrix} a_1 & 0 \\ 0 & a_2 \end{pmatrix}\Biggr)\Biggr)
$$
of ${\mathscr P}$ modulo smoothing remainders. The operator $Q = {\mathscr Q}_{1,2}$ has the asserted properties.
\end{proof}



\begin{thebibliography}{99}
\parskip=0.7pt

\bibitem{APS}
M.~F.~Atiyah, V.~K.~Patodi, and I.~M.~Singer, \emph{Spectral asymmetry and Riemannian geometry: I}, Math.~Proc.~Cambridge Philos.~Soc. \textbf{77} (1975), 43--69.


\bibitem{AtiyahSinger1}
M.~F.~Atiyah and I.~M.~Singer, \emph{The index of elliptic operators: I}, Ann. of Math. \textbf{87} (1968), 484--530.

\bibitem{AtiyahSinger3}
\bysame, \emph{The index of elliptic operators: III}, Ann. of Math. \textbf{87} (1968), 546--604.

\bibitem{Beals1975}
R.~Beals, \emph{A general calculus of pseudodifferential operators}, Duke Math. J. \textbf{42} (1975), 1--42.

\bibitem{ChazarainPiriou}
J.~Chazarain and A. Piriou, \emph{Introduction to the Theory of Linear Partial Differential Equations}, Studies in Mathematics and its Applications, vol.~14, North-Holland Publishing Co., Amsterdam-New York, 1982.

\bibitem{DouglisNirenberg}
A.~Douglis and L.~Nirenberg, \emph{Interior estimates for elliptic systems of partial differential equations}, Comm.~Pure Appl.~Math. \textbf{8} (1955), 503--538.

\bibitem{Gramsch1984}
B.~Gramsch, \emph{Relative Inversion in der St{\"o}rungstheorie von Operatoren und $\Psi$-Algebren}, Math.~Ann.~\textbf{269} (1984), 27--71.

\bibitem{Hormander1966}
L.~H\"ormander, \emph{Pseudo-differential operators and non-elliptic boundary problems}, Ann. of Math. \textbf{83} (1966), 129--209.

\bibitem{Hormander1967}
\bysame, \emph{Pseudo-differential operators and hypoelliptic equations}. Singular integrals (Proc. Sympos. Pure Math., Vol.~X, Chicago, IL, 1966), pp.~138--183. Amer. Math. Soc., Providence, RI, 1967.


\bibitem{Hormander1971}
\bysame, \emph{Fourier integral operators. I.} Acta Math. \textbf{127} (1971), no.~1-2, 79--183.

\bibitem{KrMe12a}
T.~Krainer and G.~Mendoza, \emph{The kernel bundle of a holomorphic Fredholm family}, to appear in Comm. Partial Differential Equations.

\bibitem{KrMe13a}
T.~Krainer and G.~Mendoza, \emph{Boundary value problems for first order elliptic wedge operators}, in preparation.

\bibitem{Leopold1991}
H.-G.~Leopold, \emph{On function spaces of variable order of differentiation}, Forum Math. \textbf{3} (1991), 1--21.

\bibitem{Maz91}
R.~Mazzeo, \emph{Elliptic theory of differential edge operators I}, Comm. Partial Differential Equations \textbf{16} (1991), 1615--1664.

\bibitem{Mel93}
R.~B.~Melrose, The Atiyah-Patodi-Singer index theorem. Research Notes in Mathematics, 4. A K Peters, Ltd., Wellesley, MA, 1993.

\bibitem{SchuNH}
B.-W.~Schulze, \emph{Pseudo-differential Operators on Manifolds with Singularities}, North Holland, Amsterdam, 1991.

\bibitem{Unterberger1}
A.~Unterberger, \emph{Espaces de Sobolev d'ordre variable et applications}, S{\'e}minaire Goulaouic-Schwartz (1970/71), \'Equations aux d\'eriv\'ees partielles et analyse fonctionelle, Exp.~No.~5, Centre de Math., \'Ecole Polytech., Paris, 1971.

\bibitem{Unterberger2}
\bysame, \emph{Sobolev spaces of variable order and problems of convexity for partial differential operators with constant coefficients}, Colloque International, C.N.R.S. sur les \'Equations aux D\'eriv\'ees Partielles Lin\'eaires, pp.~325--341. Asterisque, 2 et 3, Soc.~Math. France, Paris, 1973.

\end{thebibliography}
\end{document}